\tikzset{
    lablrot/.style={anchor=north, rotate=-90, inner sep=1.5mm},
    lablrotsouth/.style={anchor=south, rotate=-90, inner sep=1.5mm}
}
\numberwithin{equation}{section}
\theoremstyle{plain}
\newtheorem{theorem}[equation]{Theorem}
\newtheorem*{theorem*}{Theorem}
\newtheorem{proposition}[equation]{Proposition}
\newtheorem{lemma}[equation]{Lemma}
\newtheorem*{lemma*}{Lemma}
\newtheorem*{proposition*}{Proposition}
\newtheorem*{corollary*}{Corollary}
\newtheorem{corollary}[equation]{Corollary}
\newtheorem{question}[equation]{Question}
\newtheorem{maintheorem}{Theorem}
\newtheorem{introexample}{Example}
\newtheorem{introcorollary}{Corollary}
\newtheorem{introproposition}{Proposition}
\theoremstyle{definition}
\newtheorem{definition}[equation]{Definition}
\theoremstyle{remark}
\newtheorem*{remark}{Remark}
\newcommand{\dd}{\mathrm{d}}
\newcommand{\BO}{\mathcal{O}}
\newcommand\HH{\mathrm{H}}
\newcommand{\hh}{\mathrm{h}}
\newcommand{\CC}{\mathbb{C}}
\newcommand{\ZZ}{\mathbb{Z}}
\newcommand{\NN}{\mathbb{N}}
\newcommand{\QQ}{\mathbb{Q}}
\newcommand{\PP}{\mathbb{P}}
\newcommand{\dv}{\partial}
\newcommand\restr[1]{\raisebox{-.5ex}{$|$}_{#1}}
\newcommand{\sing}{\mathrm{sing}}
\newcommand{\Sing}{\mathrm{Sing}}
\newcommand{\GG}{\mathcal{G}}
\newcommand{\EE}{\mathscr{E}}
\DeclareMathOperator{\Spec}{Spec}
\DeclareMathOperator{\divv}{div}
\DeclareMathOperator{\Ext}{Ext}
\newcommand{\sheafhom}{\mathscr{H}\kern -.5pt om}
\newcommand{\Nm}{\mathrm{Nm}}
\newcommand{\Pic}{\mathrm{Pic}}
\DeclareMathOperator{\Prym}{Prym}
\newcommand{\cC}{\mathcal{C}}
\newcommand{\eE}{\mathcal{E}}
\newcommand{\dec}{\mathrm{dec}}
\newcommand{\PPdelta}{\PP_{\delta',\delta''}}
\newcommand{\F}{\mathcal{F}}
\newcommand{\ud}{{\underline{d}}}
\newcommand{\Nd}{{N_{\underline{d}}}}
\newcommand{\pP}{\mathcal{P}}
\newcommand{\SE}{\mathscr{S}}
\newcommand{\BE}{\mathscr{BE}}
\newcommand{\pt}{\mathrm{pt}}
\newcommand{\Ker}{\mathrm{Ker}}
\newcommand{\mult}{\mathrm{mult}}
\newcommand{\Res}{\mathrm{Res}}
\newcommand{\rank}{\mathrm{rank}}
\newcommand{\Rel}{\mathrm{Rel}}
\newcommand{\Card}{\mathrm{Card}}
\newcommand{\ns}{N_\mathrm{ad}}
\newcommand{\tns}{\ns^\mathrm{is}}
\newcommand{\nss}{N}
\newcommand{\Xiasing}{\Xi_{\sing,\mathrm{ad}}}
\newcommand{\Xiasingis}{\Xi^\mathrm{is}_{\sing,\mathrm{ad}}}
\newcommand{\Wsingis}{W_\sing^\mathrm{is}}
\newcommand{\ue}{{\underline{e}}}
\newcommand{\Part}{\mathscr{P}}
\title{The Gauss map for bielliptic Prym varieties}
\author{Constantin Podelski}
\begin{document}
\maketitle
\begin{abstract}
    We completely describe the degree of the Gauss map of the theta divisor of bielliptic Prym varieties. We characterize bielliptic Prym varieties whose Gauss degree is the same as Jacobians. We also construct bielliptic Prym varieties with a very low Gauss degree. In dimension $5$, we obtain a complete description of the Gauss degree on the Andreotti-Mayer locus.
\end{abstract}

\tableofcontents
\section{Introduction}

Let $\mathcal{A}_g$ denote the moduli space of principally polarized abelian varieties (ppav's for short) over the complex numbers. For $(A,\Theta)\in \mathcal{A}_g$, the Gauss map
\begin{align*}
    \GG: \Theta &\dashrightarrow \PP(T^\vee_0 A) \\
     x &\mapsto \tau_{x}^\ast T_x \Theta
\end{align*}
attaches to a smooth point $x\in \Theta_{\mathrm{sm}}$ its tangent space translated to the origin, viewed as a hyperplane in $T_0 A$. Here $\tau_x:A\to A$ is the translation by $x$. The Gauss map has many fascinating properties: For instance, it is generically finite if and only if $(A,\Theta)$ is  indecomposable as a ppav \cite[Sec. 4.4]{Birkenhake2004}. Thus, it is interesting to compute its generic degree. This degree was unknown until now beyond a few cases: 
\begin{itemize}
    \item For smooth theta divisors of $g$-dimensional ppav's the degree is $g!$.
        \item For non-hyperelliptic (resp. hyperelliptic) Jacobians of smooth genus $g$ curves the degree is $b_{g-1}\coloneqq \binom{2g-2}{g-1}$ (resp. $2^{g-1}$) \cite[247]{arbarello}.
        \item For a general Prym variety the degree is $D(g+1)+2^{g-2}$, where $D(g)$ is the degree of the variety of all quadrics of rank $\leq 3$ in $\PP^{g-1}$ \cite{Verra98}.
    \item For the intermediate Jacobian of a cubic threefold the degree is $72$ \cite{ClemensGriffiths1972IJCubicThree}.
    \item Recently, we computed the Gauss degree on a general ppav of some of the irreducible components of the Andreotti-Mayer loci \cite{podelski2023GaussArt} (see \ref{Equ: Intro: Gauss degree on Agt}).
\end{itemize}
Denote by $\BE_g\subset \mathcal{A}_g$ the locus of \emph{bielliptic Prym varieties}, i.e. the closure in $\mathcal{A}_g$ of the set of Prym varieties arising from étale double covers of bielliptic curves. Our main result is the complete description of the Gauss degree on $\BE_g$. For $g\geq 5$, the locus $\BE_g$ has $\lfloor g/2 \rfloor+1$ irreducible components denoted by $\EE_{g,0},\dots,\EE_{g,\lfloor g/2 \rfloor}$. For $0\leq t \leq g/2$, we define open subsets of $\EE_{g,t}'\subset \EE_{g,t}$ (Definition \ref{Def of Egt prime}), such that we have the following:
\begin{maintheorem}[\ref{Thm: degree Gauss Map on Egt, in general} and \ref{Prop: Gauss degree on Egt'}]\label{maintheorem: degree Gauss Map on Egt, in general}
For $0\leq t \leq g/2$ and $(P,\Xi)=\mathrm{Prym}(\tilde{C}/C)\in \EE'_{g,t}$, we have
\[ \deg \GG_\Xi= \begin{cases}
    b_{g-1}-2\ns(\tilde{C}/C)\,, &\text{if $t=0$,}\\
    b_{t-1}b_{g-t}+b_tb_{g-t-1}-2^{g-1}-2\ns(\tilde{C}/C)\,, &\text{if $t>0$,}
\end{cases}\]
where $b_n=\binom{2n}{n}$ denotes the middle binomial coefficient and $\ns(\tilde{C}/C)$ is a combinatorial invariant of double covers of bielliptic curves (see \ref{Def: eta}).
\end{maintheorem}
\begin{remark}
For a general $(P,\Xi)\in \EE_{g,t}'$ we have $\ns(\tilde{C}/C)=0$. In particular, a general ppav in $\EE_{g,0}$ has the same Gauss degree as non-hyperelliptic Jacobians. The number $\ns(\tilde{C}/C)$ corresponds to the number of additional singularities of $\Xi$, compared to a general member of $\EE_{g,t}'$. Under some mild assumptions, these additional singularities are isolated singularities of maximal rank (Proposition \ref{Prop: singularities are quadratic of maximal rank}), and are at $2$-torsion points. This provides an interesting example of RST-exceptional singularities in the sense of \cite{casalainamartin2009}\footnote{The tangent cone of $\tilde{\Theta}$ is contained in the tangent space to $P$ at these singularities.} where we can compute the tangent cone to the Prym theta divisor. This is a difficult question in general (see \cite[Question 2.9]{casalainamartin2012singularities}). Also, the rank of these quadratic the singularity is maximal, in contrast to quadratic RST-stable singularities whose rank is at most $4$.  
\end{remark}
Recall the Andreotti-Mayer loci defined by
\[ \mathcal{N}^{(g)}_{k} \coloneqq \{(A,\Theta)\in\mathcal{A}_g \,|\, \dim \Sing(\Theta) \geq k \} \subset \mathcal{A}_g \,, \quad \text{for $k\geq 0$.}\]
We will drop the supscript when it is clear from the context. Andreotti and Mayer \cite{Andreotti1967} show that the Jacobian locus $\mathcal{J}_g$ is an irreducible component of $\mathcal{N}_{g-4}$. But $\mathcal{N}_{g-4}$ has more components: For $g\geq 5$, the loci $\EE_{g,0}$ and $\EE_{g,1}$ as well as the loci $\mathcal{A}^2_{t,g-t}$ for $2\leq t\leq g/2$ are irreducible components of $\mathcal{N}_{g-4}$, distinct from $\mathcal{J}_g$ and the components of decomposable ppav's \cite{Donagi1981tetragonal} \cite{Debarre1988} (we refer to Debarre's paper for a definition of $\mathcal{A}^2_{t,g-t}$). In \cite{podelski2023GaussArt} we show that for $2\leq t \leq g/2$ and a general $(A,\Theta)\in\mathcal{A}^2_{t,g-t}$, we have
\begin{equation}\label{Equ: Intro: Gauss degree on Agt}
     \deg \GG_\Theta= t! (g-t)! g \,, 
\end{equation}
thus the degree is distinct from the Gauss degree on Jacobians. We thus obtain the following immediate corollary to Theorem \ref{maintheorem: degree Gauss Map on Egt, in general}:
\begin{introcorollary}
    Amongst the known irreducible components of the Andreotti-Mayer locus $\mathcal{N}_{g-4}$, the only component besides $\mathcal{J}_g$ whose general ppav has the same Gauss degree as Jacobians is $\EE_{g,0}$.
\end{introcorollary}
In analogy to the Andreotti-Mayer loci, Codogni, Grushevski and Sernesi \cite{Gru17} define the \emph{Gauss loci} by
\[ \mathcal{G}^{(g)}_d \coloneqq \{ (A,\Theta)\in \mathcal{A}_g\,|\, \deg \GG_\Theta \leq d\} \subset \mathcal{A}_g \,, \quad \text{for $d\geq 0$.} \]
By Codogni and Krämer \cite{KraemerCodogni}, the Gauss loci are closed in $\mathcal{A}_g$ and the Jacobian locus $\mathcal{J}_g$ is an irreducible component of $\mathcal{G}_{b_{g-1}}$. We obtain the following corollary to Theorem \ref{maintheorem: degree Gauss Map on Egt, in general}:
\begin{introcorollary}[\ref{Cor: EEg0 irreducible component of Gauss locus}]\label{IntroCor: EEg0 irreducible component of Gauss locus}
    For $g\geq 4$, the locus $\EE_{g,0}$ is an irreducible component of $\GG_{b_{g-1}}$, distinct from $\mathcal{J}_g$.
\end{introcorollary}
We also study degenerations in $\EE_{g,t}$. For $\ud\in \mathscr{P}_g$ a partition of $g$, we define a locus $\SE_\ud\subset \BE_g$ (Definition \ref{Def: SE ud}), corresponding to degenerations of bielliptic Prym varieties where the elliptic curve becomes singular. Together with the loci $\EE'_{g,t}$ these cover all of $\BE_g$ apart from the intersection with Jacobians and decomposable ppav's (Lemma \ref{Lemma: decomposition of Egt}). For a reduced partition $\ud=(d_1,\dots,d_n)\in \Part_g$ (i.e. we assume with $d_i>0$ for $1\leq i \leq n$), we define
\begin{equation}\label{Equ: Intro: definition of mu ud}
    \mu_\ud \coloneqq \frac{1}{2} b_{d_1-1}\cdots b_{d_n-1} \sum_{\epsilon\in \{0,1\}^{n} } \frac{(-2)^{l(\epsilon)}(n-l(\epsilon)) b_{n-l(\epsilon)}}{d_1^{\epsilon_1}\cdots d_n^{\epsilon_n}} \,. 
\end{equation}
where $l(\epsilon)\coloneqq \#\{i\,|\,\epsilon_i\neq 0\}$ denotes the length of the partition (see Section \ref{Sec: App: Computation of the coefficient mu} for alternate forms of $\mu_\ud$). We then make the following computation:
\begin{maintheorem}[\ref{Thm: Degree Egt with E cycle of P1's}]\label{maintheorem: Degree Egt with E cycle of P1's}
For $g\geq 0$, $\ud\in \Part_g$ be a partition of $g$, and $(P,\Xi)=\Prym(\tilde{C}/C)\in \SE_\ud$, we have
\[ \deg(\GG_\Xi)= \mu_\ud-2\ns(\tilde{C}/C) \,.\]
\end{maintheorem}
\begin{introexample}
In some situations, the expression of $\mu_\ud$ becomes much simpler. 
\begin{itemize}
\item If $l(\ud)=1$, we have
\[  \mu_g=b_{g-1} \,. \]
In particular, for a general $(P,\Xi)\in \SE_{{g}}$, the degree of the Gauss map is the same as for non-hyperelliptic Jacobians.
\item If $l(\ud)=2$, we have
\[ \mu_{(d_1,d_2)}=\left(6-\frac{2(d_1+d_2)}{d_1 d_2}\right)b_{d_1-1} b_{d_2-1}  \,. \]
In particular, if $d_1=1$ and $d_2=g-1$, we have $\mu_{1,g-1}=b_{g-1}$. Thus, for a general $(P,\Xi)\in \SE_{{1,g-1}}$, the degree of the Gauss map is the same as for non-hyperelliptic Jacobians.
\item At the other extreme, if $l(\ud)=g$ and $\underline{d}=(1^g)=(1,\dots,1)$, we have
\[ \mu_\ud=g\binom{g-1}{\lfloor g/2 \rfloor} \,. \]
This is the lowest possible value of $\mu_\ud$ for $\ud\in \Part_g$.
\end{itemize}
\end{introexample}
When investigating how to maximize $\ns(\tilde{C}/C)$ in the last example, we discover the following ppav which gives the lowest Gauss degree that we know of besides hyperelliptic Jacobians:
\begin{introproposition}[\ref{Sec: App: A special ppav in arbitrary dimension}]\label{Introprop: Generalization of Varleys fourfold}
    There is a Prym $(P,\Xi)\in \SE_{(1^g)}$ with 
    \[ \deg \GG_\Xi = \begin{cases}
        g \binom{g-1}{g/2}-2^{g-1} & \text{if $g$ is even,} \\
        g \binom{g-1}{\lfloor g/2 \rfloor }-\frac{2^g-2}{3} &\text{ if $g$ is odd.}
    \end{cases}\]
\end{introproposition}
If $g=4$, the Gauss degree in the above proposition is $4$ and we thus recover Varley's fourfold. For $g\geq 5$, the ppav in the above proposition has $2^{g-2}$ (resp. $(2^{g-1}-1)/3$) isolated singularities at $2$-torsion points, in the even (resp. odd) case.
\par 
For $k\geq 0$, we make the two following notations (in analogy to the $k$-th theta-null locus)
    \begin{align*} 
    \EE_{g,t }^{\prime k} &\coloneqq \{ \Prym(\tilde{C}/C)\in \EE'_{g,t}\,|\, \ns(\tilde{C}/C)=k \}\,, \\
    \SE_\ud^k &\coloneqq \{ \Prym(\tilde{C}/C)\in \SE_\ud \,|\, \ns(\tilde{C}/C)=k \} \,. 
    \end{align*} 
According to the remark above, for $g\geq 5$, a general member of these loci has exactly $k$ isolated singularities, and these singularities are at $2$-torsion points. In dimensions $4$ and $5$, the locus of bielliptic Prym varieties completely covers the Andreotti-Mayer locus $\mathcal{N}^{(g)}_{g-4}$ \cite{Beauville1977} \cite{Debarre1988}. We thus have the following immediate corollaries to Theorem \ref{maintheorem: degree Gauss Map on Egt, in general} and \ref{maintheorem: Degree Egt with E cycle of P1's}:
\begin{introcorollary}
    The locus of ppav's of dimension $4$ having the same Gauss degree as non-hyperelliptic Jacobians is
    \[\mathcal{J}_4^\mathrm{nh} \cup \EE^{\prime 0}_{4,0} \cup \SE^0_4 \cup \SE^0_{1,3} \,.\]
\end{introcorollary}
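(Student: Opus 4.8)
Since every non-hyperelliptic Jacobian of genus $4$ has Gauss degree $\binom{2\cdot 4-2}{4-1}=\binom{6}{3}=20$ (as recalled below), the plan is to determine the locus $\{(A,\Theta)\in\mathcal{A}_4\,|\,\deg\GG=20\}$ by running through the irreducible pieces of the Andreotti-Mayer locus $\mathcal{N}^{(4)}_0$. First I would reduce to that locus: if $\Theta$ is smooth then $\GG\colon\Theta\to\PP^3$ is a finite morphism with $\GG^{\ast}\BO_{\PP^3}(1)\cong\BO_A(\Theta)|_\Theta$, so $\deg\GG=\bigl(\BO_A(\Theta)|_\Theta\bigr)^{3}=\Theta^{4}=4!=24\neq 20$; and if $(A,\Theta)$ is decomposable then $\GG$ is not generically finite \cite[Sec.~4.4]{Birkenhake2004}, so it has no finite degree. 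Hence any $(A,\Theta)$ with $\deg\GG=20$ is an indecomposable point of $\mathcal{N}^{(4)}_0$. By the structure results of Beauville and Debarre \cite{Beauville1977}\cite{Debarre1988}, in dimension $4$ such a point is either a Jacobian or lies in the locus $\BE_4$ of (possibly degenerate) bielliptic Pryms; combining the latter with Lemma~\ref{Lemma decomp in intro} for $g=4$, it lies in $\mathcal{J}_4$, or in $\EE'_{4,t}$ for some $t\in\{0,1,2\}$, or in $\SE_{\ud}$ for some $\ud$ with $\deg\ud=4$.

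Next I would dispatch these cases with the two main theorems. For $J(C)\in\mathcal{J}_4$ with $C$ smooth of genus $4$, the Gauss degree of $\Theta=W_3$ is classical: a general hyperplane cuts the canonical curve in $2g-2=6$ points, and its $\GG$-fibre consists of the degree-$(g-1)$ effective divisors supported on these points, whence $\deg\GG=\binom{6}{3}=20$ when $C$ is non-hyperelliptic, while for $C$ hyperelliptic the canonical map is $2:1$ onto a rational normal curve and the same count gives $\deg\GG=2^{g-1}=8$; so the Jacobian contribution is exactly $\mathcal{J}_4\setminus\mathcal{H}_4$. For $\EE'_{4,t}$ I would substitute $g=4$ into Theorem~\ref{Thm: degree Gauss Map on Egt, in general}: at $t=0$ it reads $\deg\GG=\binom{6}{3}-\ns(\tilde C/C)=20-\ns(\tilde C/C)$, which equals $20$ exactly when $\ns(\tilde C/C)=0$, i.e.\ on $\EE^{\prime 0}_{4,0}$; at $t=1$ we fall into the last case of the theorem, where the formula becomes $\binom{0}{0}\binom{6}{3}+\binom{2}{1}\binom{4}{2}-2^{3}-\ns(\tilde C/C)=24-\ns(\tilde C/C)$, which equals $20$ exactly when $\ns(\tilde C/C)=4$, i.e.\ on $\EE^{\prime 2}_{4,1}$; and at $t=2$ the three subcases specialise to $18-\ns(\tilde C/C)$, $16-\tns(\tilde C/C)$ and $16-\ns(\tilde C/C)$, all $<20$ (using $\ns,\tns\geq 0$, which is clear from Definition~\ref{Def: eta}), so $\EE'_{4,2}$ contributes nothing. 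For $\SE_{\ud}$ with $\deg\ud=4$ I would list the compositions up to the cyclic symmetry of $\SE_{\ud}$, namely $(4)$, $(1,3)$, $(2,2)$, $(1,1,2)$, $(1,1,1,1)$, and read off $\mu(\ud)$ from Theorem~\ref{Thm: Degree Egt with E cycle of P1's}: the cases $(4)$ and $(1,3)$ from the displayed special cases $n=1$ and $n=2$, the case $(1,1,1,1)$ from the displayed case $\ud=(1,\dots,1)$, and the cases $(2,2)$ and $(1,1,2)$ by direct substitution into the formula for $\mu$, giving $\mu(4)=\mu(1,3)=20$, $\mu(2,2)=\mu(1,1,2)=16$ and $\mu(1,1,1,1)=12$. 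Hence only $\ud=(4)$ and $\ud=(1,3)$ can give $\deg\GG=20$, and then $\ns(\tilde C/C)=0$, landing us in $\SE^0_4\cup\SE^0_{1,3}$.

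Assembling the cases gives the inclusion of $\{\deg\GG=20\}$ into $(\mathcal{J}_4\setminus\mathcal{H}_4)\cup\EE^{\prime 0}_{4,0}\cup\EE^{\prime 2}_{4,1}\cup\SE^0_4\cup\SE^0_{1,3}$; the reverse inclusion is immediate, since the same formulas show every point of the right-hand side has Gauss degree $20$. Once Theorems~\ref{Thm: degree Gauss Map on Egt, in general} and \ref{Thm: Degree Egt with E cycle of P1's} are in hand the argument is essentially bookkeeping, a short finite list of binomial evaluations; the one genuinely external ingredient, and the step I expect to require the most care, is the precise assertion that in dimension $4$ the bielliptic Prym loci, together with $\mathcal{J}_4$ and $\mathcal{A}_4^{\mathrm{dec}}$, exhaust $\mathcal{N}^{(4)}_0$, so that no extra component of ppav with singular theta divisor is missed; this is the content of \cite{Beauville1977} and \cite{Debarre1988} that we invoke as a black box.
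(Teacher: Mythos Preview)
Your argument is correct and follows the same approach the paper intends: the corollary is stated as an ``immediate'' consequence of Theorems~\ref{Thm: degree Gauss Map on Egt, in general} and~\ref{Thm: Degree Egt with E cycle of P1's} together with the structural input that in dimension~$4$ the bielliptic Prym loci (with Jacobians and decomposables) exhaust $\mathcal{N}^{(4)}_0$, and your case-by-case evaluation of the formulas for $t=0,1,2$ and for each $\ud$ with $\deg\ud=4$ is exactly the intended bookkeeping. Your numerical checks are all correct, including the three subcases at $(g,t)=(4,2)$ and the values $\mu(2,2)=\mu(1,1,2)=16$, $\mu(1,1,1,1)=12$.

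Two small remarks. First, you rightly flag the external input about $\mathcal{N}^{(4)}_0$ as the only genuinely nontrivial step; this is precisely what the paper black-boxes via the citations to \cite{Beauville1977} and \cite{Debarre1988}, so you are aligned with the paper here. Second, your reduction from all of $\mathcal{A}_4$ to $\mathcal{N}^{(4)}_0$ (via $\deg\GG=4!$ on smooth $\Theta$ and undefined degree on decomposables) is a useful explicit step that the paper leaves implicit.
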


\begin{introcorollary}
    The subset of $\mathcal{N}^{(5)}_1$ of ppav's with the same Gauss degree as non-hyperelliptic Jacobians is
    \[ \mathcal{J}_5^\mathrm{nh} \cup \EE^{\prime 0}_{5,0} \cup \SE^0_5 \cup \SE^0_{1,4}\,.\]
\end{introcorollary}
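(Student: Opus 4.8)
The plan is to proceed component by component. By \cite{Beauville1977} and \cite{Debarre1988}, recalled above, $\mathcal{N}^{(5)}_1$ has exactly the five irreducible components $\mathcal{J}_5$, $\mathcal{A}_{1,4}$, $\EE_{5,0}$, $\EE_{5,1}$, $\EE_{5,2}$, and the reference value is the Gauss degree of a smooth non-hyperelliptic genus-$5$ Jacobian, $\binom{8}{4}=70$. On $\mathcal{A}_{1,4}$ every ppav is decomposable, so $\Theta$ is reducible and $\GG$ is nowhere generically finite; this component contributes nothing. On $\mathcal{J}_5$ it is classical that $\deg\GG$ on $J(C)$ equals $70$ exactly when $C$ is non-hyperelliptic and is strictly smaller when $C$ is hyperelliptic, so $\mathcal{J}_5$ contributes precisely $\mathcal{J}_5\setminus\mathcal{H}_5$.

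For the remaining three components, whose union is $\BE_5=\EE_{5,0}\cup\EE_{5,1}\cup\EE_{5,2}$, I would invoke the decomposition Lemma~\ref{Lemma: decomposition of Egt}: any point of $\BE_5$ outside $\mathcal{J}_5\cup\mathcal{A}_5^{\mathrm{dec}}$ lies in some $\EE'_{5,t}$ with $t\in\{0,1,2\}$ or in some $\SE_{\ud}$ with $\deg\ud=5$, and then I would read off $\deg\GG$ for each from Theorems~\ref{Thm: degree Gauss Map on Egt, in general} and~\ref{Thm: Degree Egt with E cycle of P1's} specialised to $g=5$. For $t=0$ this gives $\deg\GG=70-\ns(\tilde{C}/C)$, equal to $70$ iff $\ns(\tilde{C}/C)=0$, i.e. on $\EE^{\prime0}_{5,0}$. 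For $t=2$ (where $C'$ has genus $3$) one gets $56-\tns(\tilde{C}/C)$ when $C'$ is hyperelliptic and $60-\ns(\tilde{C}/C)$ otherwise, both $<70$, so nothing is contributed. For the loci $\SE_{\ud}$ one computes $\mu(\ud)$ over the seven partitions of $5$: $\mu(5)=\mu(1,4)=70$, while $\mu(2,3)=\mu(1,1,3)=52$, $\mu(2,2,1)=44$, $\mu(1,1,1,2)=36$ and $\mu(1,1,1,1,1)=30$; since $\deg\GG=\mu(\ud)-\ns(\tilde{C}/C)$, only $\ud=(5)$ and $\ud=(1,4)$ can attain $70$, and then only when $\ns(\tilde{C}/C)=0$, contributing $\SE^0_5\cup\SE^0_{1,4}$.

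The one case that needs real work is $\EE'_{5,1}$. Here $C'$ has genus $2$ and $C''$ genus $5$, so by Riemann--Hurwitz $\deg\Delta'=2$ and $\deg\Delta''=8$, and Theorem~\ref{Thm: degree Gauss Map on Egt, in general} gives $\deg\GG=\binom{0}{0}\binom{8}{4}+\binom{2}{1}\binom{6}{3}-2^4-\ns(\tilde{C}/C)=94-\ns(\tilde{C}/C)$, which would equal $70$ only if $\ns(\tilde{C}/C)=24$. To rule this out I would bound $\ns$ directly: in $\ns(\tilde{C}/C)=\#\{(D',D'') : D'\le\Delta',\ D''\le\Delta'',\ D'+D''\sim\delta\}$ the divisor $D'$ ranges over the at most two points of $\operatorname{supp}\Delta'$, and for each the admissible $D''$ are the degree-$4$ sub-divisors of $\Delta''$ lying in the single fixed class $\delta-D'\in\Pic^4(E)$; an elementary count — the number of degree-$4$ sub-divisors of a fixed degree-$8$ effective divisor on an elliptic curve lying in one $\Pic^4$-class is at most $10$, the extremal configurations being supported on an order-$8$ torsion subgroup — yields $\ns(\tilde{C}/C)\le 20$, hence $\deg\GG\ge 74>70$ on all of $\EE'_{5,1}$. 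Collecting the four contributions $\mathcal{J}_5\setminus\mathcal{H}_5$, $\EE^{\prime0}_{5,0}$, $\SE^0_5$ and $\SE^0_{1,4}$ gives the asserted description. I expect this last combinatorial bound on $\ns(\tilde{C}/C)$ in the $t=1$ case to be the only genuine obstacle; the rest is bookkeeping with the known component structure of $\mathcal{N}^{(5)}_1$ and the two main theorems.
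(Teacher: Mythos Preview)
Your approach is essentially the paper's: decompose $\BE_5$ via Lemma~\ref{Lemma: decomposition of Egt}, apply Theorems~\ref{Thm: degree Gauss Map on Egt, in general} and~\ref{Thm: Degree Egt with E cycle of P1's} stratum by stratum, and read off where the degree hits $70$. Your numerical checks for $\EE'_{5,0}$, $\EE'_{5,2}$ and all the $\SE_{\ud}$ are correct and match the paper's tables in Section~2.

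The one place you diverge --- and you rightly flag it as the only genuine obstacle --- is the bound on $\ns(\tilde{C}/C)$ for $\EE'_{5,1}$. You assert that at most ten $4$-element subsets of $\Delta''$ can lie in a fixed class of $\Pic^4(E)$, giving $\ns\le 20$; that would suffice (you only need $\ns<24$), but you do not prove it, and it is not as elementary as you suggest. The paper's general pigeonhole bound (Lemma~\ref{Lemma: Rough Bound on X^G_d}) applied to $\ud=(1,4)$ gives only $\ns\le \tfrac{1}{4}\binom{2}{1}\binom{8}{4}=35$, which is too weak here; this is precisely why Corollary~\ref{Cor: Gauss degree on EEg,1 greater than Jacobians} treats $g=5$ separately. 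The paper closes the gap not by a closed-form argument but by an explicit search over small abelian groups (Table~\ref{Table: maximal eta values for give G and d}, Lemma~\ref{Lemma: max eta in dimension 5}, Corollary~\ref{Cor: possible values of eta for Egt and SEud}), obtaining the sharp value $\max_{\EE'_{5,1}}\ns=18$. The extremal configuration is indeed supported on a copy of $\ZZ_8\subset E$, as you guessed, but the per-$D'$ count there is $9$ rather than $10$. So your plan is sound; just be aware that the step you left open is handled in the paper by enumeration, not by the kind of direct counting argument you sketch.
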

In general, Theorem \ref{maintheorem: degree Gauss Map on Egt, in general} and \ref{maintheorem: Degree Egt with E cycle of P1's} along with \ref{Cor: Gauss degree on EEg,1 greater than Jacobians} imply the following:
\begin{introcorollary}
    For $g\geq 5$, the locus of bielliptic Prym varieties having the same Gauss degree as non-hyperelliptic Jacobians is
    \[ \left(\BE_g\cap \mathcal{J}_g^\mathrm{nh}\right)\cup \EE^{\prime 0}_{g,0} \cup \SE^0_g \cup \SE^0_{1,g-1} \,.\]
\end{introcorollary}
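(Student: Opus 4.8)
The plan is to run through the decomposition of the bielliptic Prym locus provided by Lemma~\ref{Lemma decomp in intro}, and, on each of the loci $\EE'_{g,t}$ and $\SE_{\ud}$, to compare the Gauss degree supplied by Theorem~\ref{Thm: degree Gauss Map on Egt, in general} and Theorem~\ref{Thm: Degree Egt with E cycle of P1's} with the Gauss degree $\binom{2g-2}{g-1}$ of a non-hyperelliptic Jacobian of genus $g$. Recall the latter value is classical: a general hyperplane meets the canonical curve in $2g-2$ points, any $g-1$ of which span it and determine a distinct point of $W_{g-1}$, so the Gauss map of $W_{g-1}$ has degree $\binom{2g-2}{g-1}$. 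By Lemma~\ref{Lemma decomp in intro}, for $g\ge 5$ the locus of bielliptic Prym varieties (outside the Jacobian and decomposable loci) is exactly $\bigcup_{0\le t\le g/2}\EE'_{g,t}\cup\bigcup_{\deg\ud=g}\SE_{\ud}$, so it is enough to treat these pieces one at a time.

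On $\EE'_{g,t}$ I would apply Theorem~\ref{Thm: degree Gauss Map on Egt, in general}. For $t=0$ the degree is $\binom{2g-2}{g-1}-\ns(\tilde{C}/C)$, which equals $\binom{2g-2}{g-1}$ exactly when $\ns(\tilde{C}/C)=0$, that is on $\EE^{\prime 0}_{g,0}$. The case $(g,t)=(4,2)$ does not arise since $g\ge 5$. For $t=1$ the degree is $\binom{2g-2}{g-1}+2\binom{2g-4}{g-2}-2^{g-1}-\ns(\tilde{C}/C)$, and Corollary~\ref{Cor: Gauss degree on EEg,1 greater than Jacobians} shows that on $\EE'_{g,1}$ this is strictly larger than $\binom{2g-2}{g-1}$ (equivalently $\ns(\tilde{C}/C)<2\binom{2g-4}{g-2}-2^{g-1}$), so $\EE'_{g,1}$ contributes nothing. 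For $2\le t\le g/2$ the degree is of the form $(\text{main term})-\ns(\tilde{C}/C)$, or $(\text{main term})-\tns(\tilde{C}/C)$ in the hyperelliptic $t=2$ case, with a nonnegative correction; it therefore suffices to check that the main term is already strictly smaller than $\binom{2g-2}{g-1}$. Expanding $\binom{2g-2}{g-1}=\sum_{j}\binom{2t-2}{j}\binom{2g-2t}{g-1-j}$ by Vandermonde, one recognizes $\binom{2t-2}{t-1}\binom{2g-2t}{g-t}$ as the summand $j=t-1$ and bounds the remaining summands, together with $2^{g-1}$, below by $\binom{2t}{t}\binom{2g-2t-2}{g-t-1}$; the finitely many small $(g,t)$ are checked by hand, and the remaining ones by the elementary estimate $2\binom{2g-4}{g-2}>2^{g-1}$ and its analogues.

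On $\SE_{\ud}$ I would apply Theorem~\ref{Thm: Degree Egt with E cycle of P1's}, which gives $\deg\GG=\mu(\ud)-\ns(\tilde{C}/C)$. The Example following that theorem records that $\mu(\ud)=\binom{2g-2}{g-1}$ for $\ud=(g)$ and for $\ud=(1,g-1)$, so for these two compositions the degree equals $\binom{2g-2}{g-1}$ precisely on the locus $\ns(\tilde{C}/C)=0$, namely $\SE^0_g$ and $\SE^0_{1,g-1}$. For every other $\ud$ with $\deg\ud=g$ I claim $\mu(\ud)<\binom{2g-2}{g-1}$; since $\ns(\tilde{C}/C)\ge 0$ this again forces $\deg\GG<\binom{2g-2}{g-1}$. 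Assembling the three contributions yields the stated description.

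The heart of the argument, and the step I expect to be the main obstacle, is the combinatorial inequality $\mu(\ud)<\binom{2g-2}{g-1}$ for $\ud\notin\{(g),(1,g-1)\}$ (and, in the same spirit, the strict inequalities for the main terms in the cases $2\le t\le g/2$), which has to hold uniformly in $g$ and in the composition $\ud$. I would attack it by showing that merging two parts $d_i,d_j$ of $\ud$ into a single part $d_i+d_j$ never decreases $\mu$, and strictly increases it unless $\ud$ is already of the form $(g)$ or $(1,g-1)$; since $\mu(\ud)=\binom{2g-2}{g-1}$ for $\ud=(g)$, iterating this reduces the claim to the base case $n=1$. Such a monotonicity statement can be proved either by manipulating the plethystic sum defining $\mu(\ud)$ via Vandermonde's identity, or, more conceptually, by interpreting $\mu(\ud)$ as a count of point configurations on a canonical-type model and exhibiting the merging operation as an injection between the corresponding configuration sets, its failure to be surjective accounting for the strict inequality away from the two extremal compositions.
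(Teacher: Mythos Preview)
Your overall plan is right and mirrors the paper: use Lemma~\ref{Lemma decomp in intro} to reduce to the pieces $\EE'_{g,t}$ and $\SE_{\ud}$, then invoke Theorem~\ref{Thm: degree Gauss Map on Egt, in general} for the former (together with Proposition~\ref{Prop: Numerical analysis Gauss degree Egt} for $t\ge 2$ and Corollary~\ref{Cor: Gauss degree on EEg,1 greater than Jacobians} for $t=1$) and Theorem~\ref{Thm: Degree Egt with E cycle of P1's} plus the Example for the latter. The only real issue is your treatment of $\mu(\ud)<\binom{2g-2}{g-1}$ for $\ud\notin\{(g),(1,g-1)\}$.

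The monotonicity claim you propose, namely that merging two parts strictly increases $\mu$ unless $\ud$ is already $(g)$ or $(1,g-1)$, is false. For $g=5$ one computes directly from the formula in Theorem~\ref{Thm: Degree Egt with E cycle of P1's} that $\mu((1,1,3))=52=\mu((2,3))$, so the merge $(1,1,3)\to(2,3)$ does not increase $\mu$ even though $(1,1,3)$ is neither $(5)$ nor $(1,4)$; similarly $\mu((1,1,2))=16=\mu((2,2))$ for $g=4$. So your induction on the number of parts breaks down, and neither of your two proposed implementations (Vandermonde manipulation or an injection of configuration sets) can produce a strict inequality at those steps.

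The clean fix, and what the paper has in mind, is to replace the direct combinatorics by lower semicontinuity of the Gauss degree together with Lemma~\ref{Lemma: decomposition of Egt}. That lemma shows that for every $\ud$ with at least two parts and $\ud\neq(1,g-1)$ one has $\SE_{\ud}\subset\overline{\EE_{g,t}}$ for some $2\le t\le\lfloor g/2\rfloor$: if $\ud=(a,g-a)$ with $2\le a\le g-a$ take $t=a$, and if $\ud$ has $n\ge 3$ parts then the two smallest parts are each at most $\lceil g/2\rceil$, so $\ud\le(\lfloor g/2\rfloor,\lceil g/2\rceil)$ in the order of that lemma. Since the Gauss degree is lower semicontinuous in families, the generic degree $\mu(\ud)$ on $\SE_{\ud}$ is at most $\deg\GG(\EE_{g,t})$, and Proposition~\ref{Prop: Numerical analysis Gauss degree Egt} gives $\deg\GG(\EE_{g,t})<\binom{2g-2}{g-1}$. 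This yields the strict inequality you need without any merging argument.
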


This text is organized as follows: In Section \ref{Sec: The family Egt}, we recall the construction of the loci $\EE_{g,t}$, and prove the main part of Theorem \ref{maintheorem: degree Gauss Map on Egt, in general}. In Section \ref{sec: Pryms arising from admissible covers}, we study degenerations in $\EE_{g,t}$ and define the loci $\SE_\ud$. We then prove Theorem \ref{maintheorem: Degree Egt with E cycle of P1's}. Finally, in Section \ref{Sec: additional isolated Singularities}, we study the possible values of $\ns(\tilde{C}/C)$, and obtain a complete description of this invariant in dimension $5$. We then prove Proposition \ref{Introprop: Generalization of Varleys fourfold}. \par 
We work over the field of complex numbers.
\par 
\textbf{Acknowledgements.} The author would like to thank his supervisor Thomas Krämer for his continuous support and the many discussions that led to this paper. 

\subsection{Summary of the results in dimension \texorpdfstring{$5$}{5}}
We obtain a complete description of the degree of the Gauss map on $\mathcal{N}^{(5)}_1$. For an irreducible locus $Z\subset \mathcal{A}_g$, let $\deg \GG(Z)$ be the degree of the Gauss map on a general ppav $(A,\Theta)\in Z$. We have by Theorem \ref{maintheorem: degree Gauss Map on Egt, in general} and Corollary \ref{Cor: possible values of eta for Egt and SEud}:
\renewcommand{\arraystretch}{1.3}
\[ \begin{array}{|c|c|c|c|c|c|}
\hline 
Z & \mathcal{J}_5 & \mathcal{A}_5^{\mathrm{dec}} & \EE_{5,0}' & \EE_{5,1}' & \EE_{5,2}' \\
\hline 
\deg \GG(Z) & 70 & 0 & 70 & 94 & 60\\
\hline 
\max_Z \ns & & &11 & 9 & 10 \\
\hline 
\end{array}\,.\]
By Theorem \ref{maintheorem: Degree Egt with E cycle of P1's}, the degree on the boundary sets $\SE_\ud$ is:
\[
\begin{array}{|c|c|c|c|c|c|c|c|}
\hline 
    \ud &(1^5) & (1^3,2) &(1,1,3) &(1,2,2)&(1,4)&(2,3)&(5) \\
    \hline
    \deg \GG(\SE_\ud) & 30 & 36 & 52 & 44 & 70 & 52 & 70 \\
    \hline 
    \max_{\SE_\ud} \ns &  5 & 6&7 &8 &9 &10 &10\\
    \hline 
\end{array}\,.
\]
\renewcommand{\arraystretch}{1}
In both tables $\max_Z\ns$ denotes the highest value of $\ns(\tilde{C}/C)$ for $(P,\Xi)=\mathrm{Prym}(\tilde{C}/C)\in Z$. For every locus $Z$ in the tables above and every $0\leq k\leq \max_Z \ns$, there is a ppav $(P,\Xi)\in Z$ with $\ns(\tilde{C}/C)=k$ (\ref{Cor: possible values of eta for Egt and SEud}). In particular, on $\mathcal{N}^{(5)}_1$ the Gauss map can have the degrees
\[ \{0,16\}\cup\{20,22,\dots,92\}\setminus \{72,74\} \,.\]
On $\mathcal{A}_5\setminus \mathcal{N}^{(5)}_0$, the $\Theta$ divisors are smooth thus 
\[ \deg \GG = g! \,. \] 
On $\mathcal{N}^{(5)}_0\setminus \mathcal{N}^{(5)}_1$, i.e. on ppav's $(A,\Theta)\in \mathcal{A}_5$ such that $\Theta$ has isolated singularities, it is well-known \cite[rem. 2.8]{Gru17} that
\[ \deg \GG=g!-\sum_{z\in \Sing(\Theta)} \mult_z \Theta \,. \]
    where $\mult_z \Theta$ is the Samuel multiplicity as defined in \cite[Sec. 4.3]{Fulton1998}. A particular example is the locus of intermediate Jacobians of cubic threefolds $IJ_5$ whose theta divisor has a unique cubic isolated singularity and whose Gauss degree is $72$ \cite{ClemensGriffiths1972IJCubicThree}. We do not know the complete list of possible $0$-dimensional singularities are for $(A,\Theta)\in \mathcal{A}_5$. This is the last unknown piece to completely describe the degree of the Gauss map on $\mathcal{A}_5$:  
\begin{question}
    For $(A,\Theta)\in \mathcal{A}_5$, with $\Theta$ having isolated singularities, what are the possible values of
    \[ \sum_{z\in \Sing(\Theta)} \mult_z \Theta \,, \]
    where $\mult_z \Theta$ is the Samuel multiplicity.
\end{question}
    The graph on the following page summarizes the results. An arrow from $Z$ to $W$ means that $W\subset \overline{Z}$. The thick red lines denote all the loci $\EE_{g,t }^{\prime k}$ (resp. $\SE_\ud^{k}$) for $0\leq k \leq \max_Z \ns $. We have omitted certain inclusions in the closure, like 
\[ \SE_{\underline{d}}^k \subset \overline{\SE_{\underline{e}}^k} \]
for all subdivision $\ud$ of the partition $\underline{e}$ and $k>0$. There might also be other inclusions we do not know about. $IJ_5$ is in parentheses as it is in $\mathcal{N}^{(5)}_0$ and not $\mathcal{N}^{(5)}_1$.

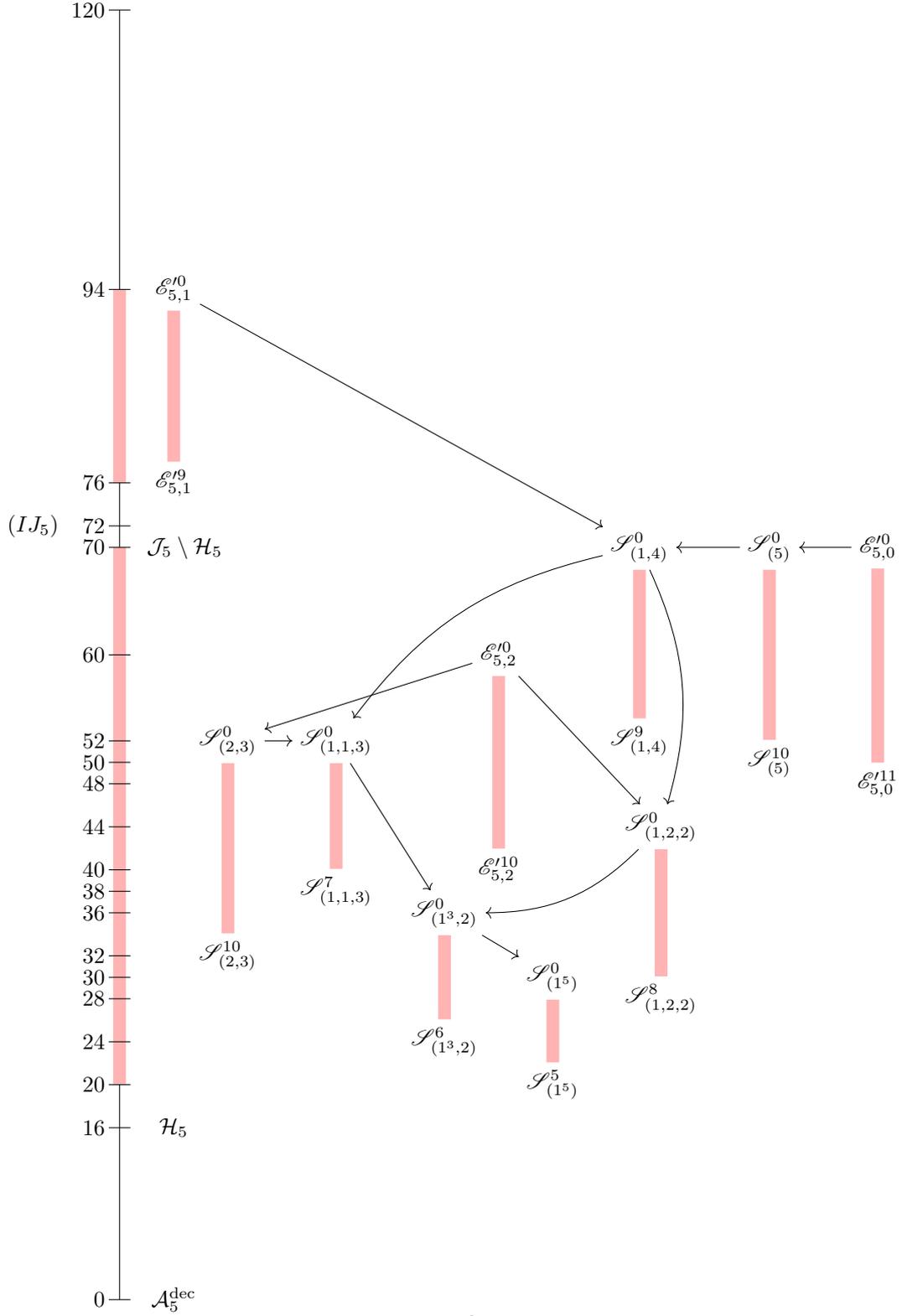
\begin{figure}
\caption{Possible Gauss degrees on $\mathcal{N}^{(5)}_1$}
\hspace*{-4em}{
\begin{tikzpicture}[scale=0.17,
deg/.style={circle, draw=black, right=7,scale=.7},
line/.style={line width=0.2cm, red!30},
arr/.style={->}]

 \draw (0,0) -- (0,120);

\node (Adec) at (5,0) {$\mathcal{A}_5^{\mathrm{dec}}$};
\node (H5) at (5,16) {$\mathcal{H}_5$};
\node(S1112) at (30,36) {$\SE_{(1^3,2)}^0$};
\node(S1112e) at (30,24) {$\SE_{(1^3,2)}^{6}$};
\node(S113e) at (20,38) {$\SE_{(1,1,3)}^{7}$};
\node(S113) at (20,52) {$\SE_{(1,1,3)}^{0}$};
\node (S23) at (10,52) {$\SE_{(2,3)}^0$};
\node (S23e) at (10,32) {$\SE_{(2,3)}^{10}$};
\node (S122e) at (50,28) {$\SE_{(1,2,2)}^{8}$};
\node (S122) at (50,44)  {$\SE_{(1,2,2)}^0$};
\node(S14e) at (48,52) {$\SE_{(1,4)}^{9}$};
\node(S14) at (48,70) {$\SE_{(1,4)}^0$};
\node(S5) at (60,70)  {$\SE_{(5)}^0$};
\node(S5e) at (60,50) {$\SE_{(5)}^{10}$};
\node(E5) at (70,70)  {$\EE_{5,0}^{\prime 0}$};
\node(E5e) at (70,48) {$\EE_{5,0}^{\prime 11}$};
\node(J5) at (6,70) {$\mathcal{J}_5\setminus \mathcal{H}_5$};
\node(E51) at (5,94) {$\EE_{5,1}^{\prime 0}$};
\node (E51e) at (5,76) {$\EE_{5,1}^{\prime 9}$};
\node (E52) at (35,60) {$\EE_{5,2}^{\prime 0}$};
\node (E52e) at (35,40) {$\EE_{5,2}^{\prime 10 }$};
\node (IJ) at (-8,72) {$\left( IJ_5\right)$};
\node (S1e) at (40,20){$\SE_{(1^5)}^{5}$}; 
\node (S1) at (40,30) {$\SE_{(1^5)}^0$};

\draw [line] (S5) -- (S5e);
\draw [line] (S1112) -- (S1112e);
\draw [line] (S113) -- (S113e);
\draw[line] (S14) -- (S14e);
\draw [line] (S122) -- (S122e);
\draw [line] (E52) -- (E52e);
\draw[line] (E51) -- (E51e);
\draw [line] (E5) -- (E5e);
\draw [line] (S23) -- (S23e);
\draw [line] (0,94) -- (0,76);
\draw [line] (0,70) -- (0,20);
\draw [line] (S1) -- (S1e);

\draw[arr] (E52) -- (S23);
\draw[arr] (S23) -- (S113);
\draw[arr] (S113) -- (S1112);

\draw[->, out=225, in=0] (S122) to (S1112.east);
\draw [arr] (E52) -- (S122);
\draw[->, bend right=20] (S14) to (S113);
\draw [->, bend left=20] (S14) to (S122);
\draw[arr] (S5) -- (S14);
\draw[arr] (E5) -- (S5);
\draw[arr] (E51) -- (S14);
\draw[arr] (S1112) -- (S1);

  \foreach \i in {0,16,20,24,28,30,32,36,38,40,44,48,50,52,60,70,72,76,94,120}{
	\draw (0,\i) node[left=3pt] {\i} ;
    \draw (1,\i) -- (-1,\i); 
} Lemma decomp in intro
\end{tikzpicture}
}
\vspace{-40pt}
\end{figure}

\newpage 
\section{The family \texorpdfstring{$\EE_{g,t}^\ast$}{Egt}}\label{Sec: The family Egt}


\subsection{The construction}\label{Sec: Construction of the Families Eg,t}

We recall the construction of the bielliptic Prym locus, following \cite{Debarre1988}. Other references are \cite{Beauville1977}, \cite{Donagi1981tetragonal} and \cite{Naranjo1992BiellipticPryms}. Let $C$ be a smooth irreducible projective curve of genus $g+1$ that is bielliptic, i.e. admits a double cover $p:C\to E$ to an elliptic curve. Let $\pi:\tilde{C}\to C$ be an étale double cover. Let
\begin{align*} 
P&\coloneqq \{ L\in \Pic^{2g-2}(\tilde{C})\,|\, \Nm_{\pi}(L)=\omega_C\,, \,\hh^0(C,L) \text{ even} \}\subset \Pic^{2g-2}(\tilde{C})\,, \\
\Xi &\coloneqq \{L\in P\,|\, \hh^0(C,L)>0 \} \subset P \,,
\end{align*}    
be the associated Prym variety, with its principal polarization. Suppose that the Galois group of the composite $p\circ \pi$ is $(\ZZ/2\ZZ)^2$, then the two other intermediate quotients induce a tower of smooth projective curves
\begin{equation}\label{Diagramm: E_g,t tower}
    \begin{tikzcd}
        & \tilde{C} \arrow[dl,"\pi"'] \arrow[d,"\pi'"] \arrow[dr,"\pi''"] \arrow[dd,bend right, "f"'] & \\
        C \arrow[dr,"p"'] & C'\arrow[d,"p'"] & C'' \arrow[dl,"p''"] \\
        & E & 
    \end{tikzcd}\,.
\end{equation}
We can assume $g(C')=t+1$ and $g(C'')=g-t+1$ for some $0\leq t \leq g/2$. For $i \in \{\emptyset,\,',\,''\}$, the morphism $p^i$ corresponds to an involution $\tau^i$ on $C^i$, a branch divisor $\Delta^i$, a ramification divisor $R^i$, and a line bundle $\delta^i$ such that $\Delta^i \in |2\delta^i|$.
\begin{align*}
    \Delta'=Q_{1}+\cdots+Q_{2t}\,, \quad \text{and} \quad  \Delta''&=Q_{2t+1}+\cdots+Q_{2g}\,.
\end{align*}
We have $\Delta=\Delta'+\Delta''$ and $\delta=\delta'+\delta''$. Let $P_1,\dots,P_{2g}$ be the corresponding ramification points of $p$ on $C$. The étale covering $\pi$ is associated to the following $2$-torsion point of $\Pic^0(C)$
\[ \eta= P_1+\cdots+P_{2t}-p^\ast \Delta'=P_{2t+1}+\cdots+P_{2g}-p^\ast \Delta'' \,. \]
Denote by $\sigma$ the involution of $\tilde{C}$ induced by $\pi$. The morphism $\pi'$ (resp. $\pi''$) is branched on $p'^\ast(\Delta'')$ (resp. $p''^\ast(\Delta')$), and corresponds to an involution $\sigma'$ (resp. $\sigma''=\sigma \sigma'$) on $\tilde{C}$.\\
 Note that the above data is equivalent to the choice of an elliptic curve $E$, a line bundle $\delta\in \Pic^g(E)$, a reduced divisor $\Delta\in |2\delta|$, the choice of a $2t$ subset $\Delta'\leq \Delta$, and a line bundle $\delta'$ such that $\Delta'\in |2\delta'|$.
\begin{definition}[The Family $\EE_{g,t}$]\label{Def: Definition of Egt}
For $0\leq t \leq g/2$, let $\EE_{g,t}^\ast \subset \mathcal{A}_g$ be the set of Prym varieties $\Prym(\tilde{C}/C)$ obtained in this way. If $t=2$, we require additionally that $C'$ is non-hyperelliptic. We denote by $\EE_{g,2}^{\mathrm{h} \ast}$ the cases where $t=2$ and $C'$ is hyperelliptic. \\ 
For $0\leq t \leq g/2$, let $\EE_{g,t}\coloneqq \overline{\EE_{g,t}^\ast}\subset \mathcal{A}_g$ denote the closure. We will define in the next section the loci $\EE'_{g,t}$, which corresponds to the case where $C'$ and $C''$ are smooth, but $C$ might be singular, see Definition \ref{Def of Egt prime}. We have inclusions $\EE_{g,t}^\ast \subset \EE'_{g,t} \subset \EE_{g,t}$, and $\EE_{g,2}^{\mathrm{h}\ast} \subset \EE_{g,2}$.
\end{definition}
By \cite[Prop. 5.4.2]{Debarre1988}, if $g\geq 5$, the loci $\EE_{g,t}$ for $0\leq t \leq g/2$ are the $\lfloor g/2 \rfloor +1$ irreducible components of $\BE_g$. The component $\EE_{g,0}$ is of dimension $2g$, and $\EE_{g,t}$ is of dimension $2g-1$ if $t\geq 1$. Moreover, $\EE_{g,0}$ and $\EE_{g,1}$ (and $\EE_{5,2}=\mathcal{A}^2_{2,3}$ if $g=5$) are irreducible components of the Andreotti-Mayer locus $\mathcal{N}_{g-4}$. In dimension $4$ we have $\BE_4=\EE_{4,1}$, and $\overline{\theta^2_{\mathrm{null},4}}=\EE_{4,0}$, where $\theta^k_{\mathrm{null},g}\subset \mathcal{A}_g$ is the locus of ppav's with exactly $k$ vanishing theta nulls.
\begin{remark}
    Note that we have not treated the case where the Galois group of the composition $p\circ \pi$ is $\ZZ/4\ZZ$. It turns out that these Pryms are tetragonally related (and thus isomorphic) to the Pryms in $\EE_{g,0}$ by \cite[Prop. 3.8]{donagi1992fibers}.
\end{remark}
As we will use it time and time again, we recall here the following proposition from \cite[338]{Mumford1974}, \cite[Prop. 5.27]{Debarre1988}:
\begin{proposition}\label{Prop: covering pullback line bundle sequence}
Let $\pi:\tilde{C} \to C$ be a double covering of smooth curves associated to $(\delta$, $\Delta)$ and $L$ a line bundle on $\tilde{C}$. We say divisor $D$ on $\tilde{C}$ is \emph{$\pi$-simple} if we cannot write it as
\[ D=F+\pi^\ast E \,, \quad \text{with $E,F$ effective and $E\neq0$.} \]
Let $D$ be an effective $\pi$-simple divisor, and $M=\pi^\ast L \otimes \BO_{\tilde{C}}(D)$. Then there is a short exact sequence of $\BO_{C}$-modules:
\[ 0 \to L \to \pi_\ast M \to L \otimes \BO_{C}(\pi_\ast D - \delta ) \to 0 \,. \]
\end{proposition}

\subsubsection*{A relation between Pryms}
We denote the norm maps associated to $p'$ and $p''$ by $\Nm'$ and $\Nm''$ respectively. If $t>0$, let 
\begin{align*}
    P'&\coloneqq  \{L \in\Pic^t(C') \,|\, \Nm'L=\delta' \} \,, & \Xi' &\coloneqq \Theta'\cap P' \,,  \\ 
    P''& \coloneqq \{L\in \Pic^{g-t}(C'')\,|\, \Nm'' L=\delta'' \}\,, & \Xi''&\coloneqq \Theta'' \cap P''\,, 
\end{align*}
denote the Prym varieties associated to $p'$ and $p''$ respectively. If $t=0$ we let $P'=0$ and
\begin{align*}
    P''& \coloneqq \{L\in \Pic^{g-t}(C'')\,|\, \Nm'' L=\delta\}\,, & \Xi''&\coloneqq \Theta'' \cap P''\,.
\end{align*}
The polarization of $P''$ (resp. $P'$ if $t\neq 0$) induced by $\Xi''$ (resp. $\Xi'$) is of type $(1,\dots,1,2)$. By \cite[Prop. 5.5.1]{Debarre1988}, the restriction of the pullback map defines an isogeny of polarized abelian varieties
\begin{equation}\label{Prop: pullback of L_P to P'xP''}
 (\pi'^\ast+\pi''^\ast)\restr{P'\times P''}:P'\times P'' \to P
\end{equation}
If $t\geq 1$, the pullback $\pi'^\ast: P'\to P$ and $\pi''^\ast:P''\to P$ are injective, thus $P',P''$ are complementary abelian varieties in the sense of \cite[125]{Birkenhake2004}.

\subsection{The evaluation bundle}\label{Sec: The evaluation bundle}
Before proceeding with the computation of the Gauss, we recall the definition of the evaluation bundle following \cite[339]{arbarello}. Let $C$ be a smooth curve of genus $g$, and $L$ be a line bundle of degree $n$ on $C$ and $0\leq d \leq n$. The \emph{evaluation bundle} associated to $L$ on $C_d$ is defined by
\[ E_L \coloneqq p_{2,\ast}(\BO_\Delta \otimes p_1^\ast L)\,, \]
where $\Delta\subset C\times C_d$ is the universal divisor and $p_1,p_2$ are the projections from $C\times C_d$ to the first and second factor, respectively. $E_L$ is a vector bundle of rank $d$ on $C_d$. At a reduced divisor $D=P_1+\cdots+P_d\in C_d$ the fiber is identified with $E_{L,D}=\oplus_{i=1}^d L_{P_i}$. Moreover, there is a map of vector bundles called the \emph{evaluation map}
\[ \mathrm{ev_L}:\HH^0(C,L)\otimes_\CC \BO_{C_d} \to E_L\,. \]
For $s\in \HH^0(C,L)$ and $D\in C_d$ we have 
\[ s\in \Ker_D \mathrm{ev}_L \iff D\leq \divv s \,. \]
We denote by $\alpha:C_d\to JC$ the Abel-Jacobi map. In what follows, denote by $\HH_\ast(-)=\HH_{2\ast}(-,\QQ)$ (resp. $\HH^\ast(-)$) the even-dimensional homology (resp. cohomology) with $\QQ$-coefficients. Let $x=[C_{d-1}]\in \HH^1(C_d)$ and $\theta\in \HH^1(JC)$ denote the polarization. By abuse of notation, we denote by $\theta$ also the pullback to $C_d$ by $\alpha$. We have the following (see \cite[342]{arbarello}):
\begin{lemma}\label{Lem: Chern classes of the evaluation bundle}
Let $0\leq d\leq n$, $C$ be a smooth curve of genus $g$, $L$ a line bundle of degree $n$ on $C$ and $E_L$ the corresponding evaluation bundle on $C_d$. The $r$-th Chern class of the evaluation bundle is given by
\[ c_r(E_L)=\sum_{k=0}^r \binom{n-g-d+r}{k} \frac{x^k \theta^{r-k}}{(r-k)!}\in \HH^r(C_d) \,. \]   
\end{lemma}
As a corollary we have the following
\begin{corollary}\label{Cor: pushforward chern class of evaluation bundle}
Let $0\leq m \leq d \leq n$, $C$ be a smooth curve of genus $g$, $L$ a line bundle of degree $n$ on $C$, and $E_L$ the corresponding evaluation bundle on $C_d$. We have
\[ \alpha_\ast(x^m c(E_L))=\sum_{k=0}^g \binom{2k+n-2g-m}{k+d-g-m} \frac{\theta^k}{k!}\in \HH^\ast(JC)\]
\end{corollary}
\begin{proof}
    With Poincaré's formula \cite[25]{arbarello} and the Chu-Vandermonde binomial identity we have
    \begin{align*}
        \alpha_\ast(x^m c_r(E_L)) &\coloneqq \sum_k \binom{n-g-d+r}{k} \frac{x^{k+m} \theta^{r-k}}{(r-k)!} \\
        &= \frac{\theta^{g+r-d+m}}{(g+r-d+m)!}\sum_k \binom{n-g-d+r}{k} \binom{g+r-d+m}{r-k} \\
        &= \frac{\theta^{g+r-d+m}}{(g+r-d+m)!} \binom{n-2d+2r+m}{r} \,.
    \end{align*}
    The corollary follows after a change of variables.
\end{proof}

\subsection{The Gauss degree on \texorpdfstring{$\EE^\ast_{g,t}$}{Egt}}\label{Sec: Gauss degree Egt}
In this section, we compute the degree of the Gauss map on bielliptic Pryms arising from coverings of smooth curves:
\begin{theorem}\label{Thm: degree Gauss Map on Egt, in general}
Let $g\geq 4$, $0\leq t \leq g/2$, and $(P,\Xi)=\mathrm{Prym}(\tilde{C}/C)\in \EE^\ast_{g,t}$. The degree of the Gauss map $\GG:\Xi \dashrightarrow \PP^{g-1}$ is given by
\[ \deg \GG= \begin{cases}
    b_{g-1}-2\ns(\tilde{C}/C)\,, &\text{it $t=0$,}\\
    b_{t-1}b_{g-t}+b_tb_{g-t-1}-2^{g-1}-2\ns(\tilde{C}/C)\,, &\text{if $t>0$.}
\end{cases}\]
If $g\geq 5$ (or $g=4$ and $C''$ not hyperelliptic) and $(P,\Xi)\in \EE_{g,2}^{\hh \ast }$, the degree of the Gauss map is given by
\[ \deg \GG =2b_{g-2}+4b_{g-3}-2^{g-2}-2\tns(\tilde{C}/C) \,. \]
If $(P,\Xi)\in \EE_{4,2}^{\hh \ast}$ and $C''$ is hyperelliptic, then
\[\deg \GG=14-2\tns(\tilde{C}/C)\,. \]
\end{theorem}
Throughout the proof we will have to state many properties for both $C'$ and $C''$. Thus, to avoid repeating ourselves we make the following convention: Unless specified otherwise, when we index something with $i$ we mean for $i\in \{',''\}$. For example, with ``let $Q^i\in C^i$ be a point" we will mean ``let $Q'\in C'$ and $Q''\in C''$ be points". \par 
\par  \vspace{.5cm} \textbf{Step one: Setting the scene.} We keep the notations of the previous section. Define $\tilde{\Xi}\subset \Theta'\times \Theta''$ and $W\subset C'_t\times C''_{g-t}$ by the following two cartesian diagrams:

\begin{center}
\begin{tikzcd}
W \arrow[d,hook] \arrow[r,two heads,"\alpha\restr{W}"] & \tilde{\Xi} \arrow[d,hook] \arrow[r,two heads,"\Nm"] & \{\delta\} \arrow[d,hook] \\
C'_t\times C''_{g-t} \arrow[r,two heads, "\alpha"']& \Theta'\times \Theta'' \arrow[r,two heads,"\Nm"'] & \Pic^g(E) \,,
\end{tikzcd}
\end{center}
where $\alpha=\alpha'\times \alpha''$ is the product of the Abel-Jacobi maps, and we make the abuse of notation to write $\Nm$ for the sum $(\Nm'\oplus \Nm''):JC'\times JC''\to JE$. If $t=0$, what we mean with this is that
\[ \tilde{\Xi}=\Nm''^{-1}(\delta)\subset \Theta''\,, \quad \text{and} \quad W=\alpha''^{-1}(\tilde{\Xi})\subset C''_{g} \,. \]
Clearly $\tilde{\Xi}$ is not contained in $\Theta'_\sing\times \Theta''_\sing$ thus $\alpha\restr{W}$ is birational. Let
\[ g \coloneqq \pi'^\ast+\pi''^\ast : \Pic(C')\times \Pic(C'')\to \Pic(\tilde{C}) \,. \]
By \cite[Prop. 5.2.1]{Debarre1988}, we have a surjection
\[g\restr{\tilde{\Xi}}: \tilde{\Xi}\to \Xi \,. \]
Define $\tilde{\mathcal{G}}\coloneqq \mathcal{G}\circ g\restr{\tilde{\Xi}} \circ \alpha\restr{W}:W\to \PP^{g-1}$. We will compute the degree of $\mathcal{G}$ by computing the degree of $\tilde{\mathcal{G}}$. By the following proposition we have $\deg \tilde{\GG}=2\deg \GG$:
\begin{proposition}\label{Prop: g is of degree 2}
The pullback map induces a generically finite morphism of degree $2$
\[
    g\restr{\tilde{\Xi}}: \tilde{\Xi} \to \Xi\,,
    \]
corresponding on the finite locus to the quotient by the involution $\iota\restr{\tilde{\Xi}}$ where
\begin{align*} 
\iota: \Pic^{t}(C')\times \Pic^{g-t}(C'') &\to \Pic^{t}(C')\times \Pic^{g-t}(C'') \\
(L',L'')&\mapsto (\omega_{C'}-\tau'L',\omega_{C''}-\tau''L'') \,. 
\end{align*}
If $t=0$, the morphism $g\restr{\Xi''}:\Xi''\to \Xi$ is étale of degree $2$.
\end{proposition}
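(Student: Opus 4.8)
The plan is to analyze the isogeny $g = \pi'^\ast + \pi''^\ast : P' \times P'' \to P$ from Proposition \ref{Prop: pullback of L_P to P'xP''} together with the surjection $g\restr{\tilde\Xi}: \tilde\Xi \to \Xi$ from \cite[Prop. 5.2.1]{Debarre1988}, and to identify the fibers of $g\restr{\tilde\Xi}$ explicitly. First I would treat the case $t \geq 1$. Here $g$ is a degree-$4$ isogeny with kernel $K = \{(p'^\ast\lambda, p''^\ast\lambda) : \lambda \in J_2E\}$, and $g^\ast L_P = L_{P'} \boxtimes L_{P''}$; since $L_{P'}$ and $L_{P''}$ are of type $(1,\dots,1,2)$, the bundle $L_{P'}\boxtimes L_{P''}$ is of type $(1,\dots,1,2,2)$, so its space of sections is $4$-dimensional, matching $\deg g = 4$ times the $1$-dimensional space $\HH^0(P, L_P)$. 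The pullback $g^\ast s$ of the Prym section $s$ (with divisor $\Xi$) cuts out a divisor in $|L_{P'}\boxtimes L_{P''}|$; I claim this divisor is exactly $\tilde\Xi$ (at least set-theoretically, and one should check scheme-theoretically using $\HH^0$-dimension counts as in Proposition \ref{Prop: covering pullback line bundle sequence}). The key point is that $\tilde\Xi$ is $K$-invariant: translating $(L',L'')$ by $(p'^\ast\lambda, p''^\ast\lambda)$ changes the norm by $2\lambda = 0$, so it preserves the condition $\Nm = \delta$, and it preserves membership in $\Theta'\times\Theta''$ because $p'^\ast\lambda$ and $p''^\ast\lambda$ lie in the Prym $P', P''$ where the relevant $\HH^0$ vanishing is detected via \ref{Prop: covering pullback line bundle sequence}. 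Hence $g\restr{\tilde\Xi}$ factors through the quotient $\tilde\Xi/K \to \Xi$, and a dimension count ($\dim\tilde\Xi = \dim\Xi = g-1$, with $\tilde\Xi \to \Xi$ surjective and $K$ finite) shows this quotient map is generically finite; comparing the polarization classes $(g\restr{\tilde\Xi})^\ast(\Xi \cdot \Xi^{g-2})$ against $((L_{P'}\boxtimes L_{P''})^{g-1})$ via the projection formula pins down the generic degree. Explicitly, $(L_{P'}\boxtimes L_{P''})^{g-1}$ computed by the binomial theorem against the self-intersections of $L_{P'}$ on $P'$ (type $(1,\dots,1,2)$, so $(L_{P'})^t = 2\cdot t!$) and likewise on $P''$, divided by $(g-1)! = \deg(\Xi^{g-1})$ and by $\deg g = 4$, gives the degree $2$ of $g\restr{\tilde\Xi}$ onto its image.

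Next I would identify the generic fiber with the orbit of the involution $\iota$. One computes $g(\iota(L',L'')) = \pi'^\ast(\omega_{C'} - \tau'L') + \pi''^\ast(\omega_{C''} - \tau''L'')$. Using $\pi'^\ast\tau'^\ast = \sigma'^\ast\pi'^\ast$ (the involutions are compatible over $C$) and $\pi'^\ast\omega_{C'} = \omega_{\tilde C}(-\mathrm{Ram}\,\pi')$ — but $\pi'$ is ramified along $p'^\ast\Delta''$, so a short computation using $\pi'^\ast p'^\ast = f^\ast = \pi''^\ast p''^\ast$ shows that $\pi'^\ast\omega_{C'} + \pi''^\ast\omega_{C''}$ differs from $\sigma^\ast$ of $\pi'^\ast L' + \pi''^\ast L''$ by the line bundle $f^\ast K_E = \BO_{\tilde C}$ twisted appropriately; the upshot is that $g(\iota(L',L'')) = \sigma^\ast(g(L',L'')) \otimes (\text{something trivial on }P)$, hence lies in the same Prym class, i.e. $g \circ \iota = g$ on $\tilde\Xi$ after accounting for the $K$-ambiguity. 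Since $\iota$ is an involution that is generically free on $\tilde\Xi$ (it is free wherever $L'$ is not a theta-characteristic-type point, which is a proper closed condition), its orbits have size $2$, and combined with the degree-$2$ statement above this forces the generic fiber of $g\restr{\tilde\Xi}$ to be exactly $\{(L',L''), \iota(L',L'')\}$, and the $K$-action to be absorbed: in fact $K$ acts on $\tilde\Xi$ but the map to $\Xi$ already only remembers the $\iota$-orbit, meaning $\tilde\Xi$ is one of the components and $K$ permutes it with itself via $\iota$. (I should double-check the precise bookkeeping here — whether $\tilde\Xi = g^{-1}(\Xi)$ as a whole or just one component of it — using that $\Nm^{-1}(\delta) \cap (\Theta'\times\Theta'')$ is irreducible of the expected dimension by \cite[Prop. 5.2.1]{Debarre1988}.)

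Finally, for the case $t = 0$: here $P' $ is trivial (a point), $C'$ has genus $1$, and the relevant object is $P''_\delta = \Nm_{p''}^{-1}(\delta) \subset \Pic^g(C'')$, a translate of the Prym $P''$, with $\pi''^\ast : P''_\delta \to P$ a $2{:}1$ isogeny with kernel $\{0, p''^\ast\delta'\}$ by Proposition \ref{Prop: pullback of L_P to P'xP''}, which also gives $\pi''^\ast\Xi = \Theta''\cap P''_\delta = \tilde\Xi$ scheme-theoretically. An isogeny of abelian varieties is finite étale, so $g\restr{\tilde\Xi} = \pi''^\ast\restr{\tilde\Xi}$ is finite étale of degree $2$, with the Galois action given by translation by $p''^\ast\delta'$; one checks this translation agrees with $\iota$ restricted to $\tilde\Xi$ in this case (where $\iota(L'') = \omega_{C''} - \tau''L''$, and $\omega_{C''} - \tau''L'' = L'' + p''^\ast\delta'$ using $\omega_{C''} = p''^\ast\delta'' \otimes \BO(R'')$ and $\Nm_{p''}L'' = \delta$, modulo a standard identity — this is the "extra" that pushes the constant from $\delta''$ to $\delta'$).

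The main obstacle I anticipate is the scheme-theoretic identification $g^\ast(\text{divisor of }\Xi) = \tilde\Xi$ with the correct (reduced, irreducible) structure, and correspondingly the careful bookkeeping of how the degree-$4$ kernel $K$ and the degree-$2$ involution $\iota$ interact: a priori $g^{-1}(\Xi)$ could have degree $4$ over $\Xi$ and one must argue that $\tilde\Xi$ — defined as the specific norm-fiber inside $\Theta'\times\Theta''$ — is a single component on which the residual map has degree $2$. Pinning this down rigorously will rely on the irreducibility and dimension statements of \cite[Prop. 5.2.1]{Debarre1988} together with the $\HH^0$-vanishing criterion \eqref{Equ: P' Theta pullback description}.
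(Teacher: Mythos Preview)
Your argument for $t=0$ is essentially the paper's argument and is fine. For $t\geq 1$, however, there is a genuine gap: your intersection-theoretic computation takes place on $P'\times P''$, but $\tilde\Xi$ does not live there. By definition $\tilde\Xi=(\Theta'\times\Theta'')\cap\Nm^{-1}(\delta)$ sits inside the $(g{+}1)$-dimensional torsor $\Nm^{-1}(\delta)\subset\Pic^t(C')\times\Pic^{g-t}(C'')$, whereas $P'\times P''$ is the $g$-dimensional sublocus where $\Nm'L'=\delta'$ and $\Nm''L''=\delta''$ separately. In fact $\tilde\Xi\cap(P'\times P'')=\Xi'\times\Xi''$ has dimension $g-2$, so $\tilde\Xi$ is certainly not contained in $P'\times P''$. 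Consequently your computation of $(L_{P'}\boxtimes L_{P''})^{g-1}$ and the comparison with $\deg g=4$ is measuring the degree of $g^{-1}(\Xi)\to\Xi$ for the isogeny preimage in $P'\times P''$, which is a different divisor from $\tilde\Xi$; the two are not even linearly equivalent in any common ambient space. Your claim that ``$\tilde\Xi$ is $K$-invariant'' is likewise misplaced: $K$ acts on $P'\times P''$, not on $\Nm^{-1}(\delta)$ in a way that would preserve $\Theta'\times\Theta''$ (translation by $p'^\ast\lambda$ does not preserve $\Theta'$ in general). The ``bookkeeping'' you flag at the end is thus not a detail but the heart of the matter, and your current framework does not resolve it.

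The paper proceeds completely differently: it works directly with sections. Writing a general $L\in\Xi$ as $\pi'^\ast\BO(D')\otimes\pi''^\ast\BO(D'')$ with $D',D''$ simple, one applies the exact sequence of Proposition~\ref{Prop: covering pullback line bundle sequence} to $\pi''$ to get
\[
0\to \HH^0(C'',D'')\overset{\alpha}{\longrightarrow}\HH^0(\tilde C,L)\overset{\beta}{\longrightarrow}\HH^0(C'',\omega_{C''}(-\tau''D'')),
\]
and then analyzes $\alpha$ and $\beta$ explicitly on sections of the form $\pi'^\ast u'\cdot\pi''^\ast u''$. This forces any preimage $(\BO(D'_1),\BO(D''_1))$ of $L$ to satisfy either $(D'_1,D''_1)\sim(D',D'')$ or $(D'_1,D''_1)\sim(\omega_{C'}-\tau'D',\omega_{C''}-\tau''D'')$, giving the degree $2$ directly. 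The identification $g\circ\iota=g$ on $\tilde\Xi$ is also done cleanly: since $\Nm_\pi L=\omega_C$ one has $\sigma^\ast L=\omega_{\tilde C}\otimes L^{-1}$, whence $g(\iota(L',L''))=\omega_{\tilde C}-\sigma^\ast g(L',L'')\sim g(L',L'')$ with no residual ``$K$-ambiguity''. If you want to salvage an intersection-theoretic approach, you would need to work on $\Nm^{-1}(\delta)$ (or its quotient by the antidiagonal $JE$-action) rather than on $P'\times P''$, and identify the class of $\tilde\Xi$ there; this is more involved than the paper's direct section argument.
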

\begin{proof}
Let $L=\pi'^\ast(\BO_{C'}(D')\otimes \pi''^\ast(\BO_{C''}(D'')\in \tilde{\Xi}$. We have
\begin{align*}
    \Nm(L)&= \BO_C(\pi_\ast(\pi'^\ast D'+\pi''^\ast D'')) \\
    &= \BO_C( p^\ast (p'_\ast D'+p''_\ast D'')) \\
    &= p^\ast \delta \\
    &= \omega_C\,.
\end{align*}
Thus $\sigma L = \omega_{\tilde{C}}-L$, and
\[ \pi'^\ast(\omega_{C'}-\tau'D')+\pi''^\ast(\omega_{C''}-\tau''D'')= \omega_{\tilde{C}}-\sigma(\pi'^\ast D'+\pi''^\ast D'')\sim \pi'^\ast D'+\pi''^\ast D'' \,. \]
Thus $g\restr{\tilde{\Xi}}$ factors through the quotient by the involution $\iota$. For a general $L\in \tilde{\Xi}$, we can assume $D'$ (resp. $D''$) to be $p'$ (resp. $p''$)-simple. Thus by \ref{Prop: covering pullback line bundle sequence} we have a left exact sequence
\[ 0\to \HH^0(C'',D'') \to \HH^0(\tilde{C},L) \to \HH^0(C'',\BO_{C''}(D''+\pi''_\ast \pi'^\ast D'-\tilde{\delta}''))\,,  \]
where $\tilde{\delta}''$ is the line bundle associated to the cover $\tilde{C}\to C''$. We have $p''^\ast \delta=\BO_{C''}(D''+\tau'' D''+ p''^\ast p'_\ast D')$ and $p''^\ast \delta=\omega_{C''}\otimes \tilde{\delta''}$, thus the left exact sequence above becomes
\[ 0 \to \HH^0(C'',D'') \overset{\alpha}{\longrightarrow} \HH^0(\tilde{C}, L) \overset{\beta}{\longrightarrow} \HH^0(C'',\omega_{C''}(-\tau'' D'')) \,. \]
The two maps $\alpha$ and $\beta$ are defined as follows:
\begin{align*}
 \forall v\in \HH^0(C'',D'')\,, \quad \alpha(v)=v\cdot p'^\ast u_{D'} \,, \quad \text{where } \divv u_{D'}=D' \,, \\
\forall u\in \HH^0(\tilde{C}, L)\,, \quad u \cdot \sigma''(p'^\ast u_{D'})-\sigma''(u) \cdot p'^\ast(u_{D'})= s \cdot \pi''^\ast(\beta(u)) \,, \end{align*}
where $\divv s = \tilde{R''}$ is the ramification of $\pi''$. Suppose $u=\pi'^\ast (u')\cdot \pi''^\ast(u'')\in \HH^0(\tilde{C}, L)$ for some $u'\in \HH^0(C',D'_1)$ and $u''\in \HH^0(C'',D''_1)$. Then
\[ s\cdot \pi''^\ast(\beta(u))=\pi''^\ast(u'')\cdot \pi'^\ast(u'\cdot \tau'(u_{D'})-\tau'(u')\cdot u_{D'} ) \,. \]
Thus either $\beta(u)=0$ and then $u''\in \HH^0(C'',D'')$ and $u'=u_{D'}$ or $u''\in \HH^0(C'',\omega_{C''}(-\tau'' D''))$. The same reasoning can be applied to $u'$, thus the preimage of $L$ by $g\restr{\tilde{\Xi}}$ is $\{(\BO_{C'}(D'),\BO_{C''}(D'')),(\omega_{C'}(-\tau' D'),\omega_{C''}(-\tau''(D''))\}$.\\ Finally, we have shown that a general $L\in g(\tilde{\Xi})$ has a $2$-dimensional space of sections, we are thus in $P$ and not in $P^{-}$. If $t=0$, then $g\restr{\tilde{\Xi}}$ is the restriction of the degree $2$ isogeny $P''\to P$, it is thus étale.
\end{proof}
We have the following:
\begin{proposition}\label{Prop: Fibers of g restr Xi tilde}
The locus where $g\restr{\tilde{\Xi}}:\tilde{\Xi}\to \Xi$ fails to be finite is 
\[ \left((p'^\ast \Pic^1(E)+W^0_{t-2}(C'))\times (p''^\ast \Pic^1(E)+W^0_{g-t-2}(C'')\right)\times_{\Pic^g(E)}\{\delta\} \,. \]
On this locus the fibers of $g$ are the orbits under action of the antidiagonal embedding $(p'^\ast,-p''^\ast):JE\to JC'\times JC''$.
\end{proposition}
\begin{proof}
In the above proof, the only assumption made to prove the finiteness of $g$ at $(\BO_{C"}(D'),\BO_{C''}(D''))\in \tilde{\Xi}$ is that $D'$ (resp. $D''$) is $p'$-simple (resp. $p''$-simple). The result follows.
\end{proof}
We have the following description of $W$'s singularities:
\begin{proposition}\label{Prop: smoothness and singularities of W}
We have
\[ \Sing(W)=\{(D',D'')\in W\,|\, D'\leq R'\,,D''\leq R''\} \,. \]
At a point $w\in\Sing(W)$, $W$ has a quadratic hypersurface singularity of maximal rank, i.e. local analytically we have
\[ (W,w)\simeq V(z_1^2+\cdots+z_g^2)\subset (\CC^g,0) \,. \]
\end{proposition}
\begin{proof}
Let $(D',D'')\in W$. Suppose that $D^i\leq R^i$. The usual coordinate $z$ on $\CC$ descends locally to a coordinate on $E=\CC/\Lambda_E$. $p^i:C^i\to E$ is ramified at the points $P_k^i\in D^i\leq R^i$ thus there are local coordinates $z_k$ on $C^i$ at $P_k^i$ such that in the coordinate $z$
\[ p^i(z_k)=p(P^i_k)+z_k^2 \,. \]
Since the points $P^i_k$ are all distinct, $(z_k)_{1\leq k\leq g}$ defines a local coordinate system for $C'_t\times C''_{g-t}$ at $(D',D'')$ and locally $W$ is defined by
\begin{align*}
    0&= \Nm(z_1,\dots,z_g)-\delta \\
    &= z_1^2+\cdots+z_g^2+\Nm(D',D'')-\delta \\
    &= z_1^2+\cdots+z_g^2 \,.
\end{align*}
Now suppose $D^i\nleq R^i$ for some $i\in\{',''\}$. For simplicity, assume $D''=P_0+F$ with $P_0$ not a ramification point of $p''$. Consider the embedding
\begin{align*}
    \iota : C'' &\hookrightarrow C'_t\times C''_{g-t} \\
    P &\mapsto(D',P+F) \,.
\end{align*}
Then the composite $\Nm\circ \iota: P\mapsto p''(P)+\Nm'(D')+\Nm''(F)$ has nonzero differential at $P_0$. In particular, $\Nm$ must have nonzero differential at $(D',D'')$ and thus $W$ is smooth at this point.
\end{proof}
We make the following definition:
\begin{definition}\label{Def: Xiasing in the smooth case}
    For $0\leq t \leq g/2$ and $(P,\Xi)\in \EE_{g,t}^\ast$, Let
    \[ \Wsingis \coloneqq \{ (D',D'')\in W_\sing \,|\, \Nm'(D')\neq \delta \} \,. \] 
    We define the set of \emph{additional singularities} and \emph{isolated additional singularities} of $\Xi$ respectively, as $0$-cycles, by
    \begin{align*}
        \Xiasing &\coloneqq  g\circ \alpha [W_\sing]/2  \,, \\
         \Xiasingis &\coloneqq g\circ \alpha [\Wsingis]/2 \,.
    \end{align*}
     We define the \emph{number of additional singularities} as $\ns(\tilde{C}/C)\coloneqq \deg(\Xiasing)$ and the number of \emph{isolated} additional singularities by $\tns(\tilde{C}/C)\coloneqq \deg(\Xiasingis)$. This is an invariant of the double cover $\tilde{C}\to C$ when $g\geq 4$ \footnote{If $g\geq 5$, the bielliptic involution on $C$ is unique \cite[Ex. VIII C-2]{arbarello}. If $g=4$, the bielliptic involution is not unique anymore, but if $C$ has two bielliptic involutions inducing two towers as in (\ref{Diagramm: E_g,t tower}), then one can investigate the group acting on $\tilde{C}$ and deduce that genus of the other intermediate curves in the two towers ($C'_1$, $C''_1$, $C'_2$, $C''_2$) are all equal to $3$, and none is hyperelliptic. Thus, by Theorem \ref{Thm: degree Gauss Map on Egt, in general} $\ns(\tilde{C}/C)$ is an invariant of the double cover $\tilde{C}\to C$.}
\end{definition}
By definition, we thus have
    \begin{equation}\label{Def: eta} 
    \begin{aligned}
        \ns(\tilde{C}/C)&=\frac{1}{2} \# \{ (D',D'')\in E_t\times E_{g-t}\,|\, D'\leq \Delta'\,, D''\leq \Delta''\,, D'+D'' \sim  \delta \}\,, \\
        \tns(\tilde{C}/C)&=\ns(\tilde{C}/C)-\frac{1}{2}\#\{ (D',D'')\in E_t\times E_{g-t}\,|\, \forall i\in \{',''\}\,, D^i\leq \Delta^i\,, D^i \sim \delta^i \}\,.
    \end{aligned}
    \end{equation}
The terminology is justified by the following:
\begin{proposition}\label{Prop: singularities are quadratic of maximal rank}
At a point in $\Xiasingis$, $\Xi$ has an isolated quadratic singularity of maximal rank. These are two-torsion points of $P$.
\end{proposition}
\begin{remark}
    The points in $\Xiasing\setminus \Xiasingis$ are singular points of $\Xi$ as well, but they are not isolated anymore when $g\geq 7$, as they are in the set $V$ of \cite[Prop. 5.2.2]{Debarre1988}. They are limits of isolated singularities in a suitable deformation of $(P,\Xi)$ in $\EE_{g,t}$.
\end{remark}
\begin{proof}
Let $L=\BO_{\tilde{C}}(\pi'^\ast D'+\pi''^\ast D'')\in \Xiasingis$. By \ref{Prop: g is of degree 2}, $g$ is étale at $\alpha(D',D'')$. Because $D^i\not \sim \delta^i$ we have $\hh^0(C',D')=\hh^0(C'',D'')=1$ thus $\alpha:C'_t\times C''_{g-t}\to \Theta'\times \Theta''$ is locally an isomorphism at $(D',D'')$. In particular, 
\[ (g\circ \alpha) \restr{W}:W\to \Xi \]
is a local isomorphism at $(D',D'')$ and thus by \ref{Prop: smoothness and singularities of W}, $\Xi$ has an isolated quadratic singularity of maximal rank at $L$. We have 
\[ L^{\otimes 2}=f^\ast \delta = K_{\tilde{C}} \,, \]
thus $L$ corresponds to a $2$-torsion point.
\end{proof}

\par  \vspace{.5cm} \textbf{Step two: A configuration of Gauss Maps.} For a general $(D',D'')\in W$ we have
\begin{align*}
\tilde{\GG}(D',D'')&=T_{g\circ \alpha (D',D'')} \Xi   \\
&= \dd g \left( T_{\alpha(D',D'')} \tilde{\Xi} \right) \\
&= \dd g \left((T_{\alpha'(D')}\Theta' \oplus T_{\alpha''(D'')} \Theta'' )\cap \Ker(\dd \Nm) \right)\,.
\end{align*}
As $g$ and $\Nm$ are morphism of abelian varieties, their differential is constant. Thus we define the following rational map
\begin{align*}
    \F: \PP(T_0 JC')^\ast \times \PP(T_0JC'')^\ast &\dashrightarrow \PP(T_0P)^\ast \\
    (H',H'')&\mapsto dg\left( (H'\oplus H'')\cap \Ker(d\Nm) \right)\,.
\end{align*}
Thus we have $\tilde{\GG}=\F \circ (\tilde{\GG'}\times \tilde{\GG''})$, where $\tilde{\GG^i}=\GG^i\circ \alpha^i:C^i_t\dashrightarrow \PP(T_0 JC^i)^\ast$ are the Gauss maps corresponding to $C^i$. We now give a more precise description of $\F$. We have the identification
\[ TJC'= TP'\oplus TJE \,, \quad \text{and} \quad TJC''=TJE\oplus TP'' \,. \]
Under this identification we have
\[ \Ker(\dd \Nm)= TP'\oplus \Delta_{TJE} \oplus TP''\subset TJC'\oplus TJC'' \,, \]
where $\Delta_{TJE}\subset TJE\oplus TJE$ is the antidiagonal embedding. By \ref{Prop: pullback of L_P to P'xP''} there is a canonical identification $TP=TP'\oplus TP''$ and
\[ \dd(g\restr{\Ker \Nm}): TP'\oplus \Delta_{TJE} \oplus TP'' \to TP'\oplus TP''  \]
is the natural projection. Moreover, for $i\in\{',''\}$ there are canonical identifications
\begin{equation}\label{Egt: Equ: Identifications tangent P and tangent JC}
\begin{aligned} 
   T_0^\vee JC^i &=\HH^0(C^i,\omega_{C^i})\,,\\
    T_0^\vee P^i &= \HH^0(E,\delta^i)\,, \\
    T_0^\vee P&=T_0^\vee P'\oplus T_0^\vee P''=\HH^0(E,\delta')\oplus \HH^0(E,\delta'')\,.
\end{aligned}
\end{equation}
We thus define $\PPdelta\coloneqq \PP(\HH^0(E,\delta')\oplus\HH^0(E,\delta''))=\PP(T_0P)^\ast$. Let $\dd z$ be the constant differential form on $E$. Then $R^i=\divv(p^{i,\ast} \dd z)\in |\omega_{C^i}|$ is the ramification divisor of $p^i$ and the corresponding section $s_{R^i}\coloneqq p^{i,\ast }\dd z$ generates $\HH^0(C^i,\omega_{C^i})^+$ the subspace of invariant sections under the $\tau^i$ action. We then have the identification
\begin{align*}
    \HH^0(E,\omega_E)\oplus\HH^0(E,\delta^i) &\overset{\sim}{\longrightarrow}  \HH^0(C^i,\omega_{C^i}) \\
    (\lambda \dd z , s)&\mapsto \lambda s_{R^i}+p^{i,\ast} s \,. 
\end{align*}
We have the following:
\begin{proposition}\label{Prop: precise description of FF}
Under these identifications, if $t=0$, the map $\F: |\omega_{C''}|\dashrightarrow |\delta''|$ is the projection from $R''$, where we identify $|\delta''|$ with $p''^\ast|\delta''|\subset |\omega_{C''}|$. If $t\geq 1$ the map $\F$ takes the form
\begin{align*}\label{Equ: Description of FF}
    \F: |\omega_{C'}|\times |\omega_{C''}| &\dashrightarrow \PPdelta \\
    \divv(\lambda s_{R'}+p'^\ast s'),\divv(\mu s_{R''}+p''^\ast s'') &\mapsto [\mu s'\oplus \lambda s''] \,,
\end{align*}
where $s^i\in \HH^0(E,\delta^i)$. 
\end{proposition}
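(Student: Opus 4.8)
The plan is to evaluate $\F$ purely by linear algebra on the (co)tangent spaces at the origin, using the three identifications fixed just above the statement together with Proposition~\ref{Prop: pullback of L_P to P'xP''}. Concretely, recall that $T_0JC^i = T_0P^i \oplus T_0JE$, that under $T_0^\vee JC^i = \HH^0(C^i,K_{C^i})$ the Prym summand $T_0^\vee P^i$ is the subspace $p^{i\ast}\HH^0(E,\delta^i)$ with complementary line $T_0^\vee JE = \langle s_{R^i}\rangle$ (so that a general element of $|K_{C^i}|$ is $\divv(\lambda s_{R^i}+p^{i\ast}s^i)$), and that $\dd(g\restr{\Ker\Nm})\colon T_0P'\oplus\Delta_{T_0JE}\oplus T_0P''\to T_0P'\oplus T_0P'' = T_0P$ is the projection killing the antidiagonal copy $\Delta_{T_0JE}$.

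First I would dualize the definition $\F(H',H'')=\dd g\big((H'\oplus H'')\cap\Ker(\dd\Nm)\big)$. A functional $\psi=(a',a'')\in T_0^\vee P = \HH^0(E,\delta')\oplus\HH^0(E,\delta'')$ annihilates $\F(H',H'')$ iff the element $(p'^\ast a',\,p''^\ast a'')\in \HH^0(K_{C'})\oplus\HH^0(K_{C''})$ — obtained by pulling $\psi$ back along $\dd g$ and extending it by zero on the two $T_0JE$ summands — lies in $(H'\oplus H'')^\perp+(\Ker(\dd\Nm))^\perp$. So I need those two subspaces concretely: $(H'\oplus H'')^\perp=\big\langle(\phi',0),(0,\phi'')\big\rangle$ with $\phi'=\lambda s_{R'}+p'^\ast s'$ and $\phi''=\mu s_{R''}+p''^\ast s''$; and, since $\Nm$ is the sum of the two norm maps, $(\Ker(\dd\Nm))^\perp=\operatorname{im}(\dd\Nm^\vee)$ is the single line spanned by $(\dd\Nm_{p'}^\vee\dd z,\,\dd\Nm_{p''}^\vee\dd z)=(s_{R'},s_{R''})$ — one line and not a plane precisely because of the antidiagonal appearing in $\Ker(\dd\Nm)$.

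The remaining step is to solve $(p'^\ast a',\,p''^\ast a'')=\alpha(\phi',0)+\beta(0,\phi'')+\gamma(s_{R'},s_{R''})$ by comparing components in $\HH^0(K_{C^i})=\langle s_{R^i}\rangle\oplus p^{i\ast}\HH^0(E,\delta^i)$: the $s_{R'}$- and $s_{R''}$-coefficients give $\alpha\lambda+\gamma=0$ and $\beta\mu+\gamma=0$, while the $p^{i\ast}$-parts give $a'=\alpha s'$ and $a''=\beta s''$. Eliminating $\gamma$ forces $\alpha\lambda=\beta\mu$, so $(a',a'')$ sweeps out the line spanned by $(\mu s',\lambda s'')$, which is exactly $\F(H',H'')=[\mu s'\oplus\lambda s'']$ for $t\ge1$. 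For $t=0$ the curve $C'$ is elliptic, $P'$ is a point, $T_0P'=0$, and the whole $C'$-factor drops out; the same bookkeeping collapses to $\F\colon|K_{C''}|\dashrightarrow|\delta''|$, $[\mu s_{R''}+p''^\ast s'']\mapsto[s'']$, i.e.\ linear projection away from $R''=\divv(s_{R''})$ onto $p''^\ast|\delta''|$ (note $R''\notin p''^\ast|\delta''|$ since $s_{R''}$ is $\tau''$-invariant).

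The real content is not this computation but checking the compatibility of the identifications feeding into it: that $T_0^\vee P^i=p^{i\ast}\HH^0(E,\delta^i)$ inside $\HH^0(C^i,K_{C^i})$ with complementary line $\langle s_{R^i}\rangle$; that $\dd\Nm_{p^i}^\vee=p^{i\ast}$ carries $\dd z$ to $s_{R^i}$ (which is how $s_{R^i}$ was defined); and that $\dd(g\restr{\Ker\Nm})$ really has the ``project off the antidiagonal $T_0JE$'' shape, which is where Proposition~\ref{Prop: pullback of L_P to P'xP''} and the splitting $g^\ast L_P=L_{P'}\otimes L_{P''}$ enter. Once those are in place the linear algebra closes at once, and the signs I have suppressed (diagonal versus antidiagonal, the sign of $\gamma$) are harmless because the final statement is projective.
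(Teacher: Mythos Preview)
Your argument is correct and is essentially the same linear-algebra computation as the paper's own proof, just phrased dually via annihilators instead of by choosing explicit coordinates $(a_0:\cdots:a_t),(b_0:\cdots:b_{g-t})$ on $|K_{C'}|\times|K_{C''}|$. In either language the key step reduces to the single relation $\alpha\lambda=\beta\mu$ (equivalently the paper's formula $(b_0a_1:\cdots:b_0a_t:a_0b_1:\cdots:a_0b_{g-t})$), and your treatment of the $t=0$ case matches as well.
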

\begin{proof}
Let $a_0=b_0$ be a coordinate for $T_0^\vee JE$ and $a_1,\dots,a_t$ (respectively $b_1,\dots,b_{g-t}$) are coordinates for $T_0^\vee P'$ (respectively $T_0^\vee P''$). This leads to homogeneous coordinates $(a_0:\cdots:a_t),(b_0:\cdots:b_{g-t})$ on $\PP(T_0 JC')^\ast \times \PP(T_0^\vee JC'')^\ast$. Then it is clear from the above description that if $(H',H'')\in \PP(T_0 JC')^\ast \oplus \PP(T_0^\vee JC'')^\ast$ are two hyperplanes, the hyperplane (if well-defined)
\[ H= \left((H'\oplus H'')\cap (TP'\oplus \Delta_{TJE}\oplus TP'')\right) / \Delta_{TJE} \subset TP'\oplus TP'' \]
is given by the coordinates $(b_0 a_1:b_0a_2:\cdots:b_0 a_t:a_0b_1:\cdots:a_0 b_{g-t})$. Via the identifications this leads to the description of the proposition.
\end{proof}

To sum things up, the following diagram is commutative:
\stepcounter{equation}
\begin{figure}[H]
\centering 
\begin{tikzcd}
W \arrow[r] \arrow[rrr,dashed,bend left=15,"\tilde{\GG}"] \arrow[d,phantom,sloped,"\subset"] & \tilde{\Xi} \arrow[r, "g\restr{\tilde{\Xi}}"'] \arrow[d,phantom,sloped,"\subset"] & \Xi \arrow[r,dashed, "\GG"'] & \PPdelta \\
C'_t\times C''_{g-t} \arrow[rrr,bend right=15,dashed,"\tilde{\GG'}\times \tilde{\GG''}"']\arrow[r,"\alpha'\times\alpha''"] & \Theta'\times \Theta''\arrow[rr,dashed,"\GG'\times \GG''"] & & {|\omega_{C'}|\times |\omega_{C''}|} \arrow[u,dashed,"\F"']
\end{tikzcd}
\caption{}\label{Diagramm: definition of GG in Egt smooth case}
\end{figure} 
\par 

Let $M=[s'+s'']\in \PPdelta$, assume $s^i\neq 0$ and let $M^i=\divv s^i$. We define
\begin{align*}
 V^i_{M^i}&\coloneqq \langle R^i,p^{i,\ast} M^i \rangle \subset |\omega_{C^i}| \,,\\
  V_M&\coloneqq \overline{\F^{-1}(M)}\,,\\
 Z^i_{M^i}&\coloneqq \{D^i\in C^i_{t^i} \,|\, D^i\leq F^i\in V^i_{M^i} \}\,,\\
 Z_M&\coloneqq \{(D',D'')\in C'_t\times C''_{g-t}\,,\, \exists (F',F'')\in V_M\,, D^i\leq F^i\}\,.
\end{align*}
There are maps
\begin{align*}
    \varphi_{s^i}:V^i_{M^i} &\overset{\sim}{\longrightarrow} \PP^1 \\
    \divv( u s_{R^i}+ v p^{i,\ast }s^i)&\mapsto (u:v) \,.
\end{align*}
Under these identifications we have 
\begin{equation}\label{Equ: V_M is product over P^1}
    V_M= V'_{M'} \times_{\PP^1} V''_{M''}\,.
\end{equation}
It is well-known \cite[page 246]{arbarello}, that the map $\tilde{\GG}'$ has the following description: for $D'\in C'_t$ such that $\hh^0(D')=1$ (i.e. $D'\notin C'^1_t$), there is a unique divisor $F'\in|\omega_{C'}|$ such that $D'\leq F'$. Then
\[ \tilde{\GG}'(D')=F'\in|\omega_{C'}|\,.\]
The same applies to $\tilde{\GG}''$. Thus $\tilde{\GG}^{i,-1}(V^i_{M^i})\subset Z^i_{M^i}$ (resp. $(\tilde{\GG}'\times\tilde{\GG}'')^{-1}(V_M)\subseteq Z_M$), but $Z^i_{M^i}$ (resp. $Z_M$) might contain components that are not in the domain of $\tilde{\GG}^i$ (resp. $\tilde{\GG}'\times\tilde{\GG}''$). We will later see that the inclusions are equalities for general $M$, except when $t=2$ and $C'$ is hyperelliptic.
\par 
\vspace{.5cm}
\textbf{Step three: A computation in cohomology.} For now, we set a general $M=[s'+s'']\in\PPdelta$ and drop the subscript on $Z$. We compute the intersection as cohomological cycles of $Z$ and $W$. We see from the definition that $Z$ is a secant variety embedded diagonally in the product $C'_t\times C''_{g-t}$ and we compute its class using a modified version of the Lemma 3.2 in \cite[page 342]{arbarello}. Let
\[ V'=\langle s_{R'},p'^\ast s' \rangle \subset \HH^0(C',\omega_{C'})\,. \]
Let $E'$ be the evaluation bundle on $C'_t$ corresponding to $\omega_{C'}$ (see \ref{Sec: The evaluation bundle}) and denote the evaluation map by
\begin{equation}\label{Equ: evalution map defining Z M}
    e':V'\otimes_\CC \BO_{C'_t} \to E' \,.
\end{equation} 
Symmetrically, let $V''=\langle s_{R''},p''^\ast s'' \rangle$, $E''$ be the evaluation bundle on $C''_{g-t}$ corresponding to $L_{C''}$ and an evaluation $e'':V''\otimes_\CC \BO_{C''_{g-t}} \to E''$ the evaluation map. The vector spaces $V'$ and $V''$ are canonically identified (by the linear map sending $s_{R'}$ to $s_{R''}$ and $p'^\ast s'$ to $p''^\ast s''$ for $s^i\in \HH^0(E,\delta^i)$). Consider the composite map of vector bundles on $C'_t\times C''_{g-t}$
\begin{center}
    \begin{tikzcd}
    V' \arrow[rrr,bend right=15,"\Psi"'] \arrow[r,hook,"\Delta_{V'}"] & V' \oplus V' \arrow[r,"\sim"] & V'\oplus V'' \arrow[r,"e'\oplus e''"]  & \mathrm{pr}_1^\ast E'\oplus \mathrm{pr}_2^\ast E'' \,,
    \end{tikzcd}
\end{center}
where we identify vector spaces with the corresponding constant vector bundles, $\Delta_{V'}$ is the diagonal embedding and $\mathrm{pr}_i$ are the projections from $C'_t\times C''_{g-t}$. Then $Z$ is the locus where $\Psi$ has a non-zero kernel. Thus by \cite[Ex. 14.4.2]{Fulton1998} we have
\begin{align*}
    [Z]&= c_{g-1}(\mathrm{pr}_1^\ast E'\oplus \mathrm{pr}_2^\ast E'') \,.
\end{align*}
By \ref{Cor: pushforward chern class of evaluation bundle}, for $l\geq 0$, we have
\begin{align*}
    \alpha'_\ast(c_{t-l}(E'))&= \alpha'_\ast [C'_l]\binom{2t-2l}{t-l}\in \HH_l(JC')\,. \stepcounter{equation}\tag{\theequation}\label{Equ: class of Z M}
\end{align*}
The same formula applies to $c_{g-t-l}(E'')$ after replacing $t$ with $g-t$. Thus we have
\begin{align*}
    \Nm_\ast& \alpha_\ast  ([Z]\cdot[W])=\Nm_\ast\alpha_\ast([Z]\cdot \alpha^\ast \Nm^\ast [\pt]) \\
    &=[\pt]\cdot \Nm_\ast \alpha_\ast(c_{g-1}(\mathrm{pr}_1^\ast E'\oplus \mathrm{pr}_2^\ast E'') \\
    &=[\pt]\cdot \Nm_\ast \alpha_\ast(c_{t-1}(E')\times c_{g-t}(E'')+c_t(E')\times c_{g-t-1}(E'')) \\
    &=[\pt]\cdot \Nm_\ast(\alpha'_\ast(c_{t-1}(E'))\times\alpha''_\ast( c_{g-t}(E''))+\alpha'_\ast(c_t(E'))\times\alpha''_\ast( c_{g-t-1}(E''))) \\
    &=[\pt]\cdot \Nm_\ast\left( [C']\times [\pt]\binom{2t-2}{t-1}\binom{2g-2t)}{g-t}+[\pt]\times [C'']\binom{2t}{t}\binom{2g-2t-2)}{g-t-1} \right) \\
    &=[\pt] 2 \left(\binom{2t-2}{t-1}\binom{2g-2t}{g-t}+\binom{2t}{t}\binom{2g-2t-2}{g-t-1} \right)\in\HH_0(JE)\,.\stepcounter{equation}\tag{\theequation}\label{Equ: intersection number Z_M with W}
\end{align*}
\par  \vspace{.5cm} \textbf{Step four: Ruling out special points.} 
We are not completely done yet, as we need to check which of the points in the intersection $Z\cap W$ lie in the domain of definition of $\tilde{\GG}=\F\circ\tilde{\GG}'\times\tilde{\GG}''$. This is done by the following:
\begin{lemma}\label{Lem: number of points of W cap Z outside of the domain of GG}
For a general $M\in \PPdelta$, the number of points in $Z_M\cap W$ outside the domain of $\tilde{\GG}$, counted with multiplicity, is
\begin{itemize}
    \item $4\ns(\tilde{C}/C)$ if $t=0$,
    \item $2^g+4\ns(\tilde{C}/C)$ if $t\geq 1$,
\end{itemize}
where $\ns(\tilde{C}/C)$ is defined in Definition \ref{Def: Xiasing in the smooth case}.
\end{lemma}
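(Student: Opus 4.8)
The idea is to list the points of $Z_M\cap W$ at which $\tilde{\GG}=\F\circ(\tilde\GG'\times\tilde\GG'')$ is undefined and to compute the local intersection number of $Z_M$ and $W$ at each of them. First I would pin down $\mathrm{indet}(\F)$. From the explicit description in Proposition~\ref{Prop: precise description of FF} one reads off that $\F(F',F'')=[\mu s'\oplus \lambda s'']$ is undefined exactly when $(F',F'')\in\{(R',R'')\}\cup\big(p'^\ast|\delta'|\times p''^\ast|\delta''|\big)$; intersecting with $V_M=V'_{M'}\times_{\PP^1}V''_{M''}$ (see \ref{Equ: V_M is product over P^1}) and using that $p^{i,\ast}|\delta^i|$ meets $V^i_{M^i}$ only in $p^{i,\ast}M^i$ with $M^i=\divv s^i$, this leaves precisely the two ``boundary'' points of the pencil: $(R',R'')$, the fibre $((1:0),(1:0))$, and $(p'^\ast M',p''^\ast M'')$, the fibre $((0:1),(0:1))$. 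Now for $(D',D'')\in Z_M\cap W$ with $\hh^0(D^i)=1$ there is a unique canonical divisor $F^i=\tilde\GG^i(D^i)$ through $D^i$, the pair $(F',F'')$ automatically lies in $V_M$, and $(D',D'')$ is in the domain of $\tilde\GG$ unless $(F',F'')$ is one of these two points. Hence the ``bad'' points of $Z_M\cap W$ fall into three families: (i) those with $\hh^0(D^i)\ge 2$ for some $i$; (ii) those mapping to $(R',R'')$; (iii) those mapping to $(p'^\ast M',p''^\ast M'')$. When $t=0$ the map $\F$ is just the projection away from $R''$, whose only indeterminacy point is $R''$, so only (i) and (ii) occur.

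I would then dispose of family (i) by dimension count: $Z_M$ is a curve inside the $g$-dimensional $C'_t\times C''_{g-t}$, while $\overline{\{\hh^0(D')\ge 2\}}=W^1_t(C')$ and $\overline{\{\hh^0(D'')\ge 2\}}=W^1_{g-t}(C'')$ have codimension $\ge 3$ by Riemann--Roch and the Brill--Noether bound, so a general member of the pencil $\{Z_M\}_M$ misses $W^1_t(C')\times C''_{g-t}$ and $C'_t\times W^1_{g-t}(C'')$; the single genuinely special configuration, $t=2$ with $C'$ hyperelliptic (where $W^1_2(C')$ is a point), is exactly the case set aside in Theorem~\ref{Thm: degree Gauss Map on Egt, in general} and must be excluded here too.

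For family (iii) I would argue as follows. If $\tilde\GG'(D')=p'^\ast M'$ then $D'\le p'^\ast M'$; for general $M$ the divisor $M'$ is supported on $t$ points off $\Delta'$, so $p'^\ast M'$ is reduced of degree $2t$ and $D'$ is the choice of one of the two points of $C'$ over each point of $M'$, giving $2^t$ divisors, each with $\Nm'(D')=\BO_E(p'_\ast D')=\BO_E(M')=\delta'$. Symmetrically there are $2^{g-t}$ choices of $D''$ with $\Nm''(D'')=\delta''$, so all $2^t\cdot 2^{g-t}=2^g$ resulting pairs automatically lie on $W=\{\Nm=\delta\}$. For general $M$ each such $D^i$ has $\hh^0=1$, the point $(D',D'')$ is a smooth point of $W$ by Proposition~\ref{Prop: smoothness and singularities of W} (since $D^i\not\le R^i$), and $Z_M$ meets $W$ transversally there, so (iii) contributes $2^g$ with multiplicity one each. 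For $t=0$ this family is absent: the analogous $D''$ has $\Nm''(D'')=\delta''\ne\delta$ and so does not lie on $W$, which is why the $2^g$ term drops out.

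Finally, for family (ii): if $\tilde\GG^i(D^i)=R^i$ with $\hh^0(D^i)=1$ then, as $\deg D^i=g(C^i)-1$, Riemann--Roch gives $\hh^0(K_{C^i}-D^i)=1$, so $R^i$ is the unique canonical divisor through $D^i$ and in particular $D^i\le R^i$; together with $\Nm(D',D'')=\delta$ these are exactly the singular points of $W$ of Proposition~\ref{Prop: smoothness and singularities of W}, and pushing forward along the bijections $R^i\xrightarrow{\sim}\Delta^i$ identifies them with $\{(D',D'')\in E_t\times E_{g-t}: D'\le\Delta',\,D''\le\Delta'',\,D'+D''\sim\delta\}$, a set of cardinality $\ns(\tilde C/C)$ by Definition~\ref{Def: eta} (and likewise for $t=0$ with the $C'$-factor omitted). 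At each such point $W$ has a quadratic hypersurface singularity of maximal rank, and I would check that for general $M$ the curve $Z_M$ --- cut out near the point by the rank-one condition on the bundle map $\Psi$ --- is smooth there with tangent direction non-isotropic for the quadratic form, so that the local intersection multiplicity of $Z_M$ and $W$ is exactly $2$; this gives the $2\ns(\tilde C/C)$ contribution, and adding up the three (resp.\ two) families proves the Lemma. The main obstacle is precisely this last multiplicity statement: one must show that the tangent line of $Z_M$ at each of the fixed quadratic points of $W$ genuinely moves with $M$ and is non-isotropic for a general $M$, and that no higher-order tangency occurs --- this, together with the bookkeeping in the hyperelliptic exceptional cases, is the delicate part, whereas the elimination of family (i) and the $2^g$ count in family (iii) are essentially dimension counts.
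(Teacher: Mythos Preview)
Your overall architecture matches the paper's proof almost exactly: the paper also splits the bad locus into the fibre over $(R',R'')$, the fibre over $(p'^\ast M',p''^\ast M'')$, and the Brill--Noether locus, and handles them in the same order with the same tools (Proposition~\ref{Prop: smoothness and singularities of W} for the quadratic singularities, an explicit count of $2^g$ for the second fibre, and a dimension count for the Brill--Noether locus). Two points, however, need correction.

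First, your codimension bound for family~(i) is wrong: $C'^1_t$ and $C''^1_{g-t}$ have codimension exactly $2$ (not $\ge 3$) in $C'_t$ and $C''_{g-t}$ under the standing hypothesis that $C^i$ is non-hyperelliptic or of genus $2$. With only codimension $2$, a naive ``general curve avoids it'' argument does not immediately go through, because all the curves $Z_M$ pass through the fixed points over $(R',R'')$. The paper deals with this by passing to the incidence variety $\Gamma'\times\Gamma''$ (where the preimage of the Brill--Noether locus has codimension $1$) and doing the dimension count in the target $|K_{C'}|\times|K_{C''}|$: the image of $\hat{\GG}(q^{-1}(W)\cap\Gamma'^1\times\Gamma'')$ has codimension $2$ there, so a general line $V_M$ through $(R',R'')$ avoids it away from $(R',R'')$. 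You will need this extra step; your pure dimension count in the source is not sufficient.

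Second, your reason for dropping family~(iii) when $t=0$ is off. When $t=0$ one has $\delta''=\delta$, so for $D''\le p''^\ast M''$ one gets $\Nm''(D'')=\delta''=\delta$ and these points \emph{do} lie on $W$. The correct reason (which you in fact stated earlier) is that for $t=0$ the map $\F$ is the linear projection from $R''$ alone and is \emph{defined} at $p''^\ast M''$, so these points are in the domain of $\tilde\GG$ and need not be removed. For the multiplicity arguments in families~(ii) and~(iii) the paper supplies the precise mechanisms you allude to: for~(ii) it uses Lemma~\ref{Lem: rank of projection from Gamma to C_g} to show the tangent directions of $Z_M$ at the quadratic point sweep out a linear space of codimension $\hh^0(D')+\hh^0(D'')-2\le 2$, which cannot sit inside a maximal-rank quadric cone when $g\ge 5$; for~(iii) it uses the one-parameter family $M_\lambda=[s'+\lambda s'']$ to see that the tangent directions of $V_{M_\lambda}$ at $(p'^\ast M',p''^\ast M'')$ span a complement to $T(p'^\ast|\delta'|)\oplus T(p''^\ast|\delta''|)$, forcing transversality.
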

Assume the conditions of the lemma. Let $M\in \PPdelta$ general. By the lemma above the intersection $Z_M\cap W$ is finite and the number of points is given by \ref{Equ: intersection number Z_M with W}. We thus have if $t\geq1$
\begin{align*}
    \deg \GG&= \deg \tilde{\GG}/2 \\
    &= 1/2([Z_M]\cap W -2^g-4\ns(\tilde{C}/C)) \\
    &=\left(\binom{2t-2}{t-1}\binom{2g-2t}{g-t}+\binom{2t}{t}\binom{2g-2t-2}{g-t-1} \right)-2^{g-1}-2\ns(\tilde{C}/C) \,.
\end{align*}
Similarly if $t=0$ we have
\[\deg \GG=\left(\binom{2t-2}{t-1}\binom{2g-2t}{g-t}+\binom{2t}{t}\binom{2g-2t-2}{g-t-1} \right)-2\ns(\tilde{C}/C) \,.\]
\begin{proof}[Proof of Lemma \ref{Lem: number of points of W cap Z outside of the domain of GG}]
The curves $C'$ and $C''$ are not hyperelliptic if $t\geq3$. Thus by assumption $C^i$ is either of genus $2$ or not hyperelliptic. In either case, $C'^1_t$ (resp. $C''^1_{g-t}$) is of codimension $2$ (or empty) inside $C'_t$ (resp. $C''_{g-t}$). We first check the domain of definition of $\F$. Let $M=[s'+s'']\in \PPdelta$ be general. The points in $V_M=\overline{\F^{-1}(M)}$ that are not in the domain of definition of $\F$ are
\[(R',R'')\,, \quad \text{and additionally if } t\geq 1\,,\quad (p'^\ast M',p''^\ast M'')\,,\]
where $M^i=\divv(s^i)$.
\par
\emph{1) Points lying above $(R',R'')$.} By the definition of $Z_M$, the set of points of $Z_M\cap W$ lying above $(R',R'')$ is in bijection with 
\[ \{ (D',D'')\in C'_t\times C''_{g-t}\,|\, D^i\leq R^i \text{ and } \Nm(D',D'')=\delta \} \,. \]
This is in bijection with the set of the lemma after pushing forward to $E$. Fix one such point $(D',D'')\in W\cap Z_M$ with $D^i\leq R^i$. We want to compute the intersection multiplicity of $W\cap Z_M$ at $(D',D'')$. Let 
\begin{equation}\label{Equ: definition of Gamma} \Gamma^i=\{(D,H)\in C^i_{g(C^i)-1}\times |\omega_{C^i}|\,, D\leq H \} \,. 
\end{equation}
Since $C^i$ is not hyperelliptic (or of genus $2$), $\Gamma^i$ is the closure of the graph of $\tilde{\GG}^i$ \cite[page 247]{arbarello2} and we have the following commutative diagram
\begin{center}
    \begin{tikzcd}
       \Gamma'\times \Gamma'' \arrow[d,"q'\times q''"'] \arrow[dr, "\hat{\GG}"] & \\
    C'_{t}\times C''_{g-t} \arrow[r,dashed,"\tilde{\GG}'\times \tilde{\GG}'' "'] & {|\omega_{C'}|\times |\omega_{C''}|} \,.
    \end{tikzcd}
\end{center}
Let $\hat{Z}_M=\hat{\GG}^{-1}(V_M)$. Let $x=((D',R'),(D'',R''))\in \Gamma'\times \Gamma''$ and $y=(D',D'')\in C'_t\times C''_{g-t}$. By \ref{Lem: rank of projection from Gamma to C_g} $\hat{\GG}$ is étale above $(R',R'')$, in particular, $\hat{\GG}\restr{\hat{Z}_M}$ induces a local isomorphism onto $V_M$. Thus all the arrows in the following diagram are local isomorphism:
\begin{center}
\begin{tikzcd}
(\hat{Z}_M,x)  \arrow[dr,"\hat{\GG}\restr{\hat{Z}_M}",] \arrow[d,"q\restr{\hat{Z}_M}"'] \\
(Z_M,y) \arrow[r,"\tilde{\GG}'\times\tilde{\GG}''"'] & (V_M,(R',R''))\,.
\end{tikzcd}
\end{center}
Thus $Z_M$ is smooth at $y$. Moreover, varying $M$ in $\PPdelta$, the set of $V_M$'s is an open set of all lines in $|\omega_{C'}|\times |\omega_{C''}|$ going through $(R',R'')$, thus the  set of tangent directions of $\hat{Z}_M$ is an open set in $T_x(\Gamma'\times \Gamma'')$. We need the following Lemma, whose proof we do below:
\begin{lemma}\label{Lem: rank of projection from Gamma to C_g}
Let $C$ be a smooth curve of genus $g$. Let $\Gamma=\{(D,H)\in C_{g-1}\times |K_C|\,|\, D\leq H \}$. Let $q:\Gamma\to C_{g-1}$ be the projection. Then if $(D,H)\in \Gamma$ and $H$ is reduced, $\Gamma$ is smooth at $(D,H)$ and
\[ \mathrm{corank}(\dd_{(D,H)} q)=\hh^0(C,D)-1 \,. \]
\end{lemma}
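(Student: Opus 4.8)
The plan is to realize $\Gamma$ as an incidence variety and compute its tangent space directly. Recall the universal divisor setup: on $C_{g-1}\times C$ there is the universal divisor $\mathcal{D}\subset C_{g-1}\times C$, and the map $q:\Gamma\to C_{g-1}$ is the projectivization of the kernel of the evaluation map $\HH^0(C,K_C)\otimes\BO_{C_{g-1}}\to E_{g-1}$, where $E_{g-1}$ is the rank-$(g-1)$ bundle whose fiber at $D$ is $\HH^0(\BO_D(K_C))=\HH^0(K_C/K_C(-D))$; concretely $\Gamma$ parametrizes pairs $(D,H)$ with $D\le H\in|K_C|$. First I would fix a point $(D,H)\in\Gamma$ with $H$ reduced, write $H=D+D'$ with $D'$ effective of degree $g-1$ and $D\cap D'=\emptyset$ (possible since $H$ is reduced), and work in local coordinates around the points of $\mathrm{supp}(H)$.

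Second, I would compute $T_{(D,H)}\Gamma$. A tangent vector to $C_{g-1}$ at $D$ is a section of $N_{D/C}=\BO_D(D)$; a tangent vector to $|K_C|=\PP\HH^0(K_C)$ at $H$ (with $H=\divv s$) is a class in $\HH^0(K_C)/\CC\cdot s$. The incidence condition $D\le H$ linearizes to: the first-order deformation of $D$ must be "contained in" the first-order deformation of $H$, i.e. the image of the tangent vector of $|K_C|$ in $\HH^0(\BO_H(K_C))$, restricted to $D$, must equal the image of the tangent vector of $C_{g-1}$ under the natural map $\HH^0(\BO_D(D))\to\HH^0(\BO_D(K_C))$ (twisting by $K_C$, legitimate away from the support of $D'$). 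Since $H$ is reduced, $\HH^0(\BO_H(K_C))=\HH^0(\BO_D(K_C))\oplus\HH^0(\BO_{D'}(K_C))$, so the $D'$-component of the $|K_C|$-tangent vector is unconstrained. One gets the exact description
\[
T_{(D,H)}\Gamma \;=\; \{(v,\bar w)\in \HH^0(\BO_D(D))\oplus \HH^0(K_C)/\CC s \;:\; \bar w|_D = v \text{ in } \HH^0(\BO_D(K_C))\}\,,
\]
where $\bar w|_D$ is the image under $\HH^0(K_C)\to\HH^0(\BO_D(K_C))$. This is visibly a linear space, and a dimension count using $\hh^0(K_C)=g$ together with surjectivity of $\HH^0(K_C)\to\HH^0(\BO_H(K_C))$ for reduced $H$ — which holds because $\hh^1(K_C(-H))=\hh^1(\BO_C)=1$ and the map $\HH^0(K_C)\to\HH^0(\BO_H(K_C))$ has cokernel injecting into $\HH^1(K_C(-H))$ but $H\in|K_C|$ forces the constant section to die, so the map is surjective — shows $\dim T_{(D,H)}\Gamma=g-1=\dim\Gamma$, i.e. $\Gamma$ is smooth at $(D,H)$.

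Third, I would read off the corank. The differential $\dd_{(D,H)}q$ sends $(v,\bar w)\mapsto v$. Its image is $\{v\in\HH^0(\BO_D(D)): v \text{ lies in the image of }\HH^0(K_C)\to\HH^0(\BO_D(K_C))\}$ (after the $K_C$-twist identification), so the cokernel of $\dd q$ is identified with the cokernel of $\HH^0(K_C)\to\HH^0(\BO_D(K_C))$. By Serre duality that cokernel is dual to $\HH^0(\BO_C(D))/\HH^0(\BO_C)=\HH^0(\BO_C(D))/\CC$, which has dimension $\hh^0(C,D)-1$. Hence $\mathrm{corank}(\dd_{(D,H)}q)=\hh^0(C,D)-1$, as claimed. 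The one genuinely delicate point — the main obstacle — is bookkeeping the $K_C$-twist when $D'$ meets $D$; this is why I assume $H$ reduced, which forces $D\cap D'=\emptyset$ and makes all the restriction maps clean. If one wanted to drop reducedness the argument would need the fuller machinery of the secant-bundle description from \cite[Ch. VIII]{arbarello2}, but for the stated hypothesis the above suffices.
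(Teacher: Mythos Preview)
Your overall strategy is sound and the corank computation via Serre duality is correct, but there is a genuine error in the smoothness step. You claim that $\HH^0(K_C)\to\HH^0(\BO_H(K_C))$ is surjective because ``$\hh^1(K_C(-H))=\hh^1(\BO_C)=1$''. This is wrong on two counts: first, $\hh^1(\BO_C)=g$, not $1$; second, the map cannot be surjective since the source has dimension $g$ while the target has dimension $\deg H=2g-2$. What you actually need for $\dim T_{(D,H)}\Gamma=g-1$ is much simpler: in your description the constraint map $(v,\bar w)\mapsto \bar w|_D-\iota(v)$ lands in $\HH^0(\BO_D(K_C))$, and already the first component $\iota:\HH^0(\BO_D(D))\to\HH^0(\BO_D(K_C))$ is an isomorphism. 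Indeed, since $H=D+D'$ with $D\cap D'=\emptyset$, multiplication by any section of $\BO_C(D')=K_C(-D)$ nonvanishing on $D$ gives $\BO_D(D)\simeq\BO_D(K_C)$. With this fix the dimension count goes through and the rest of your argument (image of $\dd q$ identified with the image of $\HH^0(K_C)\to\HH^0(\BO_D(K_C))$, cokernel computed by Serre duality) is correct.

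For comparison, the paper takes a more hands-on route: it observes that the \emph{other} projection $\hat\GG:\Gamma\to|K_C|$ is finite and \'etale over reduced divisors (the fibre is just a choice of $g-1$ out of $2g-2$ distinct points), which immediately gives smoothness; it then writes the defining equations of $\Gamma$ in local coordinates $(z_1,\dots,z_{g-1},a_1,\dots,a_{g-1})$ and recognises the matrix of $\dd q$ as (a row-scaled version of) the Brill--Noether matrix for $D$, whose rank is $g-\hh^0(C,D)$. Your approach trades these explicit coordinates for the normal-bundle and Serre-duality formalism; once the smoothness gap is patched, both proofs are equally valid, with the paper's being more elementary and yours more functorial.
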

Applying this to $\Gamma'$ and $\Gamma''$ we have that the set of tangent directions of $Z_M$ is an open set of a plane of $T_y(C'_t\times C''_{g-t})$ of codimension
\begin{equation}\label{Equ: dimension tangent of lines of Z_M intersecting with W} \hh^0(C',D')+\hh^0(C'',D'')-2\leq 2 \,, 
\end{equation}
where the inequality comes from \ref{Prop: covering pullback line bundle sequence}. By \ref{Prop: smoothness and singularities of W}, $W$ has a quadratic singularity of maximal rank at $y$. The intersection of a smooth curve and a quadric cone is of multiplicity $2$ if and only if the tangent direction of the curve is not contained in the corresponding quadric cone. A singular quadric cone of maximal rank and of dimension $g$ cannot contain a linear space of dimension more than $g/2$. If $g\geq 5$ we have $g-2>g/2$ thus for a general $M\in \PPdelta$, the tangent $T_y Z_M$ is not in the tangent cone to $W$ and thus the intersection multiplicity is exactly $2$. If $g=4$, and we are not in the situation where $t=2$ and $C'$ hyperelliptic, then $\hh^0(C',D')+\hh^0(C'',D'')\leq 3$ and thus $Z_M$ meets $W$ with multiplicity $2$ as well.

\emph{2) Points lying above $(p'^\ast \delta',p''^\ast \delta'')$.}
Let $x=(D',D'')\in C'_t\times C''_{g-t}$ with $D^i\leq p^{i,\ast} M^i$. Let $y=(p'^\ast M',p''^\ast M'')\in |\omega_{C'}|\times|\omega_{C''}|$. By generality of $(M',M'')$, we have $\Nm(D',D'')=\delta $ if and only if $D'$ (resp. $D''$) has exactly one point lying above each point of $M'$ (resp. $M''$). Thus there are exactly $2^g$ such points. Let's compute the multiplicities in the intersection $Z_M\cap W$. For dimension reasons and generality of $M$, $D^i$ is not in $C'^1_t$ (resp. $C''^1_{g-t}$). By generality, $D^i$ is reduced, thus $\tilde{\GG}'\times\tilde{\GG}''$ is étale at $(D',D'')$. Thus it is enough to check the multiplicity of the intersection of $Y$ and $V_M$ at $(p'^\ast M',p''^\ast M'')$, where 
\[Y\coloneqq \tilde{\GG}'\times\tilde{\GG}''(U) \]
for $U\subset W$ a small neighborhood around $(D',D'')$. Moreover, since $W$ is smooth at $x$, $Y$ is smooth. We have
\begin{align*}  (p'^\ast|\delta'|\times p''^\ast |\delta''|,y) \subset (Y,y) \,, \quad \text{thus}\\
\quad T_{p'^\ast M '} p'^\ast |\delta'| \oplus T_{p''^\ast M''} p''^\ast |\delta''| \subset T_{p'^\ast M',p''^\ast M''} Y \,. \end{align*}
We have one dimension of freedom for the lines $V_M$ passing through $(p'^\ast M',p''^\ast M'')$: fix some $s^i$ with $M^i=\divv s^i$. We define
\begin{equation*}
    M_\lambda \coloneqq [s'+\lambda s'']\in \PPdelta \,,\quad \lambda\in \CC^\ast \,.
\end{equation*}
Then the lines $V_{M_\lambda}$ all pass through $(p'^\ast M',p''^\ast M'')$, and their tangent directions generate $T_{p'^\ast M'} V'_{M'} \oplus T_{p''^\ast M''} V''_{M''}$. We have
\[ T_{p^{i\ast} M^i} V^i_{M^i} \oplus T_{p^{i\ast} M^i} p^{i\ast} |\delta^i|=T_{p^{i\ast } M^i} |\omega_{C^i}| \,, \]
thus a general line $V_{M_\lambda}$ must be transversal to $Y$ at $(p'^\ast M',p''^\ast M'')$ (else $Y$ would be singular). Thus the points above $(p'^\ast M',p''^\ast M'')$ appear with multiplicity one.
\par 
\emph{3) The domain of $\tilde{\GG}'\times\tilde{\GG}''$.} By \cite[page 247]{arbarello}, $\tilde{\GG}'$ (resp. $\tilde{\GG}''$) is undefined on the Brill-Noether loci $C'^1_t$ (resp. $C''^1_{g-t}$). we have $\Gamma'^1=q'^{-1}(C'^1_t)\subset \Gamma'$ of codimension $1$ thus $\hat{\GG}(q^{-1}(W)\cap \Gamma'^1\times \Gamma'')$ is of codimension $2$ and a general line $V_M$ going through $(R',R'')$ will not meet this set except possibly in $(R',R'')$. The same reasoning applies to $C''^1_{g-t}$. Thus, for a general $M\in \PPdelta$, no points of $Z_M\cap W$ are outside the domain of $\tilde{\GG}'\times \tilde{\GG}''$, except possibly points above $(R',R'')$, but those were treated in 1) already.
\end{proof}
\begin{proof}[Proof of Lemma \ref{Lem: rank of projection from Gamma to C_g}]
Let $\hat{\GG}:\Gamma \to |K_C|$ be the projection. Then $\hat{\GG}$ is finite, and étale above reduced divisors. In particular, if $H$ is reduced, $\Gamma$ is smooth at $(D,H)$. Let $(w_i)_{1\leq i \leq g}$ be a basis of $\HH^0(C,K_C)$ such that $H=\divv w_g$. Locally near $H$ this basis induces coordinates $(a_i)_{1\leq i\leq g-1} $ on $|K_C|=\PP\HH^0(C,K_C)$. Suppose $D=P_1+\cdots +P_{g-1}$ and let $z_j$ be coordinates on $C$ around $P_j$. Locally $\Gamma$ is defined by the vanishing of 
\[ \left(w_g(z_j)+ \sum_{i=1}^{g-1} a_i w_i(z_j)\right)_{1\leq j \leq g-1} \,. \]
Thus

\begin{align*} T_{(D,H)} \Gamma &= \Ker \left( 
\begin{array}{ccc|ccc}
\dv w_g(P-1)& {} & 0  & w_1(P_1) & \cdots & w_{g-1}(P_1) \\
{} & \ddots & & \vdots & & \vdots \\
0 & {} & \dv w_g(P_{g-1}) & w_1(P_{g-1}) & \cdots  & w_{g-1}(P_{g-1})
\end{array} 
\right) \\
& \subseteq T_D C_{g-1}\oplus T_H |K_C| \,.
\end{align*} 
A basis is given by 
\[
e_i= \left(\frac{w_i(P_1)}{\dv w_g(P_1)}, \cdots , \frac{w_i(P_{g-1})}{\dv w_g(P_{g-1})},0,\cdots, -1,\cdots , 0 \right)^t \,.
\]
for $1\leq i \leq g-1$. Thus the image of $\dd q$ at $(D,H)$ is generated by the columns of
\[
\begin{pmatrix}
\frac{w_1(P_1)}{\dv w_g(P_1)} & \cdots & \frac{w_{g-1}(P_1)}{\dv w_g(P_1)} \\
\vdots && \vdots \\
\frac{w_1(P_{g-1})}{\dv w_g(P_{g-1})} & \cdots & \frac{w_{g-1}(P_{g-1})}{\dv w_g(P_{g-1})}
\end{pmatrix}\,.\]
We recognize the Brill-Noether matrix without the last column (which is zero in this case) and with rows multiplied by a non-zero scalar. Thus the rank is $g-\hh^0(C,D)$ (see \cite{arbarello}).
\end{proof}

\par
\par  \vspace{.5cm} \textbf{Step five: The special case $t=2$ and $C'$ hyperelliptic.}
Assume $\Delta'=Q_1+Q_2+Q_3+Q_4$ and $Q_1+Q_2\sim \delta'$. Let $\iota$ be the hyperelliptic involution on $C'$. Since $C'$ is hyperelliptic $\Gamma'$ has two components:
\[ \Gamma'_1=\{(D,D+\iota D)\,|\,D\in C'_2\}\,, \quad \text{and} \quad \Gamma'_2=\{(D,H)\,|\, D\in C'^1_2\,, D\leq H\in|\omega_{C'}|\} \,. \]
$\Gamma'_1$ is isomorphic to $C'_t$ and $\Gamma'_2$ lies above $C'^1_t$ and has generically a $1$-dimensional fiber above it. Thus we can decompose
\[ \hat{Z}'_{M'}=\hat{Z}'_{M',1}\cup \hat{Z}'_{M',2}\,,\quad \text{where} \quad \hat{Z}'_{M',k}\subset \Gamma'_k \,. \]
We then define $Z'_{M',k}=q(\hat{Z}'_{M',k})$. First, suppose $C''$ not hyperelliptic. Then in the intersection $Z_M\cap W$ we need to rule out all points coming from $Z'_{M',2}\times_{\PP^1}Z''_{M''}$ since these are all not in the domain of $\tilde{\GG}$. Since $\alpha'(Z'_{M',2})=\{\eta_{C'}\}$ a single point and the morphism $Z'_{M',2}\to \PP^1$ is of degree $2$, we have
\begin{align*}
    [Z'_{M',2}\times_{\PP^1}Z''_{M''}]\cdot [W]&=2\Nm''_\ast[Z''_{M''}]\cdot[pt] \\
    &= 2 b_{g-3} p''_\ast [C'']\cdot[pt] \\
    &= 4\binom{2g-6)}{g-3} \,,
\end{align*}
where in the second line we use \ref{Equ: class of Z M}. Same as in step four, we need to rule out points above $(p'^\ast M',p''^\ast M'')$, from which $2\cdot 2^{g-2}$ are in $Z'_{M',1}\times_{\PP^1}Z''_{M''}$. We also need to rule out points above $(R',R'')$ from which there are
\[ \tns(\tilde{C}/C)\coloneqq \mathrm{Card}\{ (D',D'')\in E_2\times E_{g-2}\,|\, D^i\leq \Delta^i\,, D'+D''\sim \delta\,, D' \not\sim \delta' \}\,. \]
The multiplicities are the same as in step four, thus
\begin{align*}
    \deg \GG &= \deg \tilde{\GG}/2 \\
    &=\left([Z_M]\dot [W]-[Z'_{M',2}\times_{\PP^1} Z''_{M''}]\dot [W]- 2^{g-1}-2\tns(\tilde{C}/C)\right)/2 \\
    &=2\binom{2g-4}{g-2}+4b_{g-3}-2^{g-2}-\tns(\tilde{C}/C) \,.
\end{align*}
Finally, we deal with the case when $(g,t)=(4,2)$ and both $C'$ and $C''$ are hyperelliptic. We have the decomposition
\[Z_M=\bigcup_{1\leq i,j\leq 2} Z'_{M',i}\times_{\PP^1} Z''_{M'',j} \,,\]
where $Z''_{M'',j}$ is defined as $Z'_{M',j}$ was. In $Z'_{M',1}\times_{\PP^1} Z''_{M'',1}$ there are $4$ points above $(p'^\ast M',p''^\ast M'')$ to rule out and above $(R',R'')$ there are
\[ \tns(\tilde{C}/C)= \mathrm{Card}\{ (D',D'')\in E_2\times E_2\,|\, D^i\leq \Delta^i\,, D'+D''\sim \delta\,, D' \not\sim \delta' \} \,, \]
points to rule out, with multiplicities respectively $1$ and $2$. We have $[Z'_{M',2}\times_{\PP^1} Z''_{M'',2}]\cdot[W]=0$, thus 
\begin{align*}
    \deg \GG &= \deg \tilde{\GG}/2 \\
    &=
        \big( ([Z_M]-[Z'_{M',2}\times_{\PP^1} Z''_{M''}]-[Z'_{M'}\times_{\PP^1} Z''_{M'',2}]+[Z'_{M',2}\times_{\PP^1} Z''_{M'',2}])\cdot [W] \\
&\quad    - 4 - 2\tns(\tilde{C}/C) \big)/2 \\
    &=24-4-4-2-\tns(\tilde{C}/C) \\
    &=14-\tns(\tilde{C}/C)\,.
\end{align*}

\subsection{Numerical analysis}
We have the following:
\begin{proposition}\label{Prop: Numerical analysis Gauss degree Egt}
Let $g\geq 4$ and $2\leq t \leq g/2$, then
\[ \deg \GG(\EE_{g,1})>\deg \GG(\mathcal{J}_g)> \deg \GG(\EE_{g,t}) \,. \]
Moreover the function $t\mapsto \deg \GG(\EE_{g,t})$ is strictly decreasing on the range $1\leq t\leq g/2$.
\end{proposition}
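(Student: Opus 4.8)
The proof is purely numerical once one feeds in the formulas of Theorem~\ref{Thm: degree Gauss Map on Egt, in general}, so the plan is to rewrite everything in terms of central binomial coefficients. Put $a_k:=\binom{2k}{k}$. A general member of $\EE_{g,t}$ has $\ns(\tilde{C}/C)=\tns(\tilde{C}/C)=0$; moreover, by Proposition~\ref{Prop: bielliptic curves that are hyperelliptic}, $C'$ has genus $t+1\ge 2$ and is non-hyperelliptic for $t\ge 3$, while for $t=2$ it is hyperelliptic only inside a proper closed subset, so a general member lies in the ``Else'' branch of the theorem for every integer $1\le t\le\lfloor g/2\rfloor$. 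Hence
\[
\deg\GG(\EE_{g,t})=f(t)-2^{g-1},\qquad f(t):=a_{t-1}a_{g-t}+a_ta_{g-t-1}\qquad(1\le t\le\lfloor g/2\rfloor),
\]
while $\deg\GG(\mathcal{J}_g)=\binom{2g-2}{g-1}=a_{g-1}$ (the classical value, also obtained as $\deg\GG(\EE_{g,0})$ from the theorem). Writing $B:=a_{g-1}$, the statement reduces to: (i) $f(1)-2^{g-1}>B$; (ii) $t\mapsto f(t)$ is strictly decreasing on $\{1,\dots,\lfloor g/2\rfloor\}$; (iii) $f(t)-2^{g-1}<B$ for $2\le t\le\lfloor g/2\rfloor$.

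For (i), $f(1)=a_0a_{g-1}+a_1a_{g-2}=B+2a_{g-2}$, so $f(1)-2^{g-1}-B=2(a_{g-2}-2^{g-2})$, and it is enough to know $a_n>2^n$ for $n\ge 2$; this follows by induction from $a_2=6>4$ and $a_{n+1}/a_n=2(2n+1)/(n+1)>2$. For (ii), the two middle summands of $f(t)$ and of $f(t+1)$ cancel, so
\[
f(t)-f(t+1)=a_{t-1}a_{g-t}-a_{t+1}a_{g-t-2}=a_{t-1}a_{g-t-2}\bigl(\rho(g-t-1)-\rho(t)\bigr),\qquad\rho(k):=\frac{a_{k+1}}{a_{k-1}}.
\]
Using $a_{k+1}/a_k=2(2k+1)/(k+1)$ one finds $\rho(k)/4=\frac{4k^2-1}{k(k+1)}=4-\frac{4k+1}{k(k+1)}$, which is strictly increasing in $k\ge 1$ (one checks that $\frac{4k+1}{k(k+1)}$ is strictly decreasing, which is elementary). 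Since $1\le t\le\lfloor g/2\rfloor-1$ forces $g-t-1>t$ for both parities of $g$, we get $\rho(g-t-1)>\rho(t)$, hence $f(t)>f(t+1)$.

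By (ii) it is enough, for (iii), to treat $t=2$. Writing $B=a_{g-1}$ and $f(2)=2a_{g-2}+6a_{g-3}$ and reducing both to multiples of $a_{g-3}$ via $a_k/a_{k-1}=2(2k-1)/k$, a short computation gives
\[
B-f(2)=\frac{2(g-7)}{g-1}\,a_{g-3},\qquad\text{so}\qquad B+2^{g-1}-f(2)=\frac{2(g-7)}{g-1}\,a_{g-3}+2^{g-1}.
\]
For $g\ge 7$ this is $\ge 2^{g-1}>0$, and for $g\in\{4,5,6\}$ one checks directly that $2^{g-1}$ beats $\frac{2(7-g)}{g-1}a_{g-3}$ (the inequalities $8>4$, $16>6$, $32>8$). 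Combining (i)--(iii) yields $\deg\GG(\EE_{g,1})>B=\deg\GG(\mathcal{J}_g)>f(2)-2^{g-1}\ge f(t)-2^{g-1}=\deg\GG(\EE_{g,t})$ for $2\le t\le g/2$, together with the asserted strict monotonicity of $t\mapsto\deg\GG(\EE_{g,t})$ on $1\le t\le g/2$.

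The work here is bookkeeping, not insight: the one point to be careful about is that a general member really does avoid all three special branches of Theorem~\ref{Thm: degree Gauss Map on Egt, in general}, so that the single formula $f(t)-2^{g-1}$ governs the whole range $1\le t\le\lfloor g/2\rfloor$; and that the closed form for $B-f(2)$ is correct, since that is exactly where the small cases $g=4,5,6$ have to be split off. Once the monotonicity of $\rho$ is in hand, everything else is a two-line verification.
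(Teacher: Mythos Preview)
Your proof is correct and follows essentially the same route as the paper: both reduce everything to the ``Else'' formula of Theorem~\ref{Thm: degree Gauss Map on Egt, in general} for a general member and then compare central binomial coefficients. Your version is in fact more complete: the paper simply asserts that $t\mapsto f(t)$ is ``clearly a decreasing function of $t$'' without justification, whereas you supply a rigorous argument via the ratio $\rho(k)=a_{k+1}/a_{k-1}$; and for the inequality $\deg\GG(\mathcal{J}_g)>\deg\GG(\EE_{g,2})$ the paper uses a cruder bound that only kicks in for $g\ge 8$ and then checks $g=4,5,6,7$ by hand, while your exact identity $B-f(2)=\tfrac{2(g-7)}{g-1}a_{g-3}$ handles $g\ge 7$ uniformly and leaves only $g=4,5,6$ to verify.
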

\begin{proof}
Fix $g\geq 4$, recall that for $1\leq t \leq g/2$ we have
\[ \deg \GG(\EE_{g,t})=\left(b_{t-1}b_{g-t}+b_{t}b_{g-t-1} \right)-2^{g-1} \,, \]
where $b_n=\binom{2n}{n}$ is the middle binomial coefficient. This is clearly a decreasing function of $t$. We have
\[ \deg \GG(\EE_{g,1})= b_{g-1}+2b_{g-2}-2^{g-1}>b_{g-1} \,. \]
Using the identity $b_n=(4-2/n)b_{n-1}$ we have
\begin{align*}
 b_{g-1}&=\left(4-{2}/(g-1)\right)b_{g-2} \\
&= 2b_{g-2}+\left(2-{2}/({g-1})\right)\left(4-{2}/({g-2}) \right)b_{g-3}\,,
\end{align*}
thus
\begin{align*}
    \deg \GG(\mathcal{J}_g)-\deg \GG(\EE_{g,2}) &=
    b_{g-1}- \left(2b_{g-2}+6b_{g-3}-2^{g-1} \right) \\
    & > \left(2-\frac{12}{g-2} \right)b_{g-3} \geq 0
\end{align*}
for $g\geq 8$. The remaining values can be checked by hand:
\begin{center}
    \begin{tabular}{c|c|c|c|c}
       $g$ & 4 & 5 & 6 & 7 \\ \hline
       $\deg \GG(\EE_{g,2})$  & 16 & 60 & 228 & 860 \\
       $\deg \GG(\mathcal{J}_g)$ & 20 & 70 & 252 & 924 
    \end{tabular}
\end{center}
\end{proof}

\subsection{The Gauss locus}
In analogy to the Andreotti-Mayer loci, Codogni, Grushevski and Sernesi \cite{Gru17} define the \emph{Gauss loci} by
\[ \mathcal{G}^{(g)}_d \coloneqq \{ (A,\Theta)\in \mathcal{A}_g\,|\, \deg \GG_\Theta \leq d\} \subset \mathcal{A}_g \,, \quad \text{for $d\geq 0$.} \]
We will drop the supscript when it is clear from the context. By Codogni and Krämer \cite{KraemerCodogni}, the Gauss loci are closed in $\mathcal{A}_g$ and the Jacobian locus $\mathcal{J}_g$ is an irreducible component of $\mathcal{G}_{b_{g-1}}$. By Theorem \ref{maintheorem: degree Gauss Map on Egt, in general}, a general ppav in $\EE_{g,0}$ has Gauss degree $b_{g-1}$ as well. With the same methods as those of \cite{KraemerCodogni}, one obtains the following corollary to Theorem \ref{maintheorem: degree Gauss Map on Egt, in general}:
\begin{corollary}\label{Cor: EEg0 irreducible component of Gauss locus}
    For $g\geq 4$, the locus $\EE_{g,0}$ is an irreducible component of $\GG_{b_{g-1}}$, distinct from $\mathcal{J}_g$.
\end{corollary}
\begin{proof}
    Clearly, a general ppav in $\EE_{g,0}$ is not a Jacobian. Suppose that $\EE_{g,0}$ is strictly contained in an irreducible component $Z\subset \mathcal{G}_{b_{g-1}}$. As $\EE_{g,0}$ is an irreducible component of the Andreotti-Mayer locus $\mathcal{N}_{g-4}$ by \cite{Debarre1988}, a general ppav $(A,\Theta)\in Z$ will have $\dim \Sing(\Theta)<g-4$. The degeneration argument of \cite{KraemerCodogni} would then imply that the Gauss degree has to drop when we degenerate $(A,\Theta)$ to a ppav $(P,\Xi)\in\EE_{g,0}$, provided $\Sing(\Xi)$ is non-degenerate, i.e. not stable under translations by a positive-dimensional subvariety of $P$. From the proof of Theorem \ref{Thm: degree Gauss Map on Egt, in general} it follows that for a general $(P,\Xi)\in \EE_{g,t}$, we have
    \begin{align*} 
    \Sing(\Xi)&=\pi''^\ast \restr{P''}(\Sing(\Theta'')\cap P'')\\
    &=\pi''^\ast \restr{P''}\left( p''^\ast \Pic^1(E)+W_{g-1}(C'') \right)\cap P'' \,,
    \end{align*}
    which is not degenerate.
\end{proof}

\section{The boundary of \texorpdfstring{$\EE_{g,t}^\ast$}{Egt}}\label{sec: Pryms arising from admissible covers}
We will now investigate the ppav's in the closure of $\EE_{g,t}^\ast$. 
\subsection{Preliminaries}\label{Sec: Preliminaries boundary of EEgt}
First, let us recall some generalities about coverings of nodal curves. Let $C$ be a complete connected nodal curve. Its arithmetic genus is defined by 
\[ g=p_a(C)\coloneqq 1-\chi(\BO_C)\,.\]
From now on, we will always mean the arithmetic genus when we talk about the genus of a nodal curve. Let $\omega_C$ be its dualizing sheaf, which is a line bundle of total degree $2g-2$ \cite[Chapter 10]{arbarello2}. Let $\beta:N\to C$ denote the normalization. Let $q\in C$ be a singular point and $p_1,p_2\in N$ be its preimage by $\beta$. Then locally near $q$, $\omega_C$ is the sheaf of $1$-forms $\omega$ on $N$, regular except possibly at $p_1$ and $p_2$, verifying
\[ \mathrm{Res}_{p_1}(\omega)+\mathrm{Res}_{p_2}(\omega)=0 \,. \]
Let $\tilde{C}$ be a complete connected nodal curve of genus $g+m$, and $\pi:\tilde{C}\to C$ be a double cover, corresponding to an involution $\sigma:\tilde{C}\to \tilde{C}$. Necessarily $\Sing(C)\subset \pi(\Sing(\tilde{C}))$. At a singular point of $p\in\tilde{C}$, the involution $\sigma$ is of one of three types (see \cite{Beauville1977}):
\begin{enumerate}[1)]
    \item The involution does not fix $p$, thus it exchanges $p$ with another singular point $p'\in \tilde{C}$. Then $\pi$ is étale at $p$ and $C$ is singular at $q=\pi(p)$.
    \item The involution fixes $p$ and exchanges both branches. Then we can identify $\hat{\BO}_{\tilde{C},p}=\CC[[x,y]]/(xy)$ such that
    \[ \sigma^\ast x=y\,, \quad \text{thus} \quad \hat{\BO}_{C,q}=\CC[[x+y]]/(xy)\simeq \CC[[u]]\,, \]
    for $u=x+y$. Thus $C$ is smooth at $q$.
    \item The involution fixes $p$ but does not exchange both branches. Then we can identify the completion of the local ring $\hat{\BO}_{\tilde{C},p}=\CC[[x,y]]/(xy)$ such that the action of $\sigma$ is
    \[ \sigma^\ast x=-x\,, \quad \sigma^\ast y=-y \,. \]
    Thus $\hat{\BO}_{C,q}=\CC[[x^2,y^2]]/(xy)\simeq \CC[[u,v]]/(uv)$ for $u=x^2$, $v=y^2$ and $C$ has a node at $q$.

\end{enumerate}
Moreover, from the above description we have the following:
\begin{proposition}\label{Prop: sing of type (1,2) is flat}
    The morphism $\pi:\tilde{C}\to C$ is flat at singular points of type (1) and (2), and not flat at singular points of type (3).
\end{proposition}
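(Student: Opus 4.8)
The statement is local, so I would work throughout with the completed local rings $\hat{\BO}_{C,q}$ and $\hat{\BO}_{\tilde{C},p}$ and use the three explicit normal forms recalled just above, treating each type separately.

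\emph{Types (1) and (2): flatness.} In case (1) the morphism $\pi$ is étale at $p$ by the local description, and étale morphisms are flat, so there is nothing more to prove. In case (2) we have $\hat{\BO}_{C,q}\simeq\CC[[u]]$ with $u=x+y$, which is a regular one-dimensional local ring, while $\hat{\BO}_{\tilde{C},p}\simeq\CC[[x,y]]/(xy)$ is the local ring of a reduced nodal curve, hence a one-dimensional complete intersection and in particular Cohen--Macaulay, and the scheme-theoretic fibre of $\pi$ over $q$ is zero-dimensional. By the "miracle flatness" criterion (flatness over a regular base for a Cohen--Macaulay source with equidimensional fibres) $\pi$ is flat at $p$. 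Alternatively, and more concretely, one checks directly that $\{1,x\}$ is a $\CC[[u]]$-basis of $\CC[[x,y]]/(xy)$: since $y=u-x$ and $x^2=x(x+y)=ux$ one gets $x^k=u^{k-1}x$ and $y^k=u^{k-1}y$ for $k\ge 1$, from which freeness of rank $2$ is immediate.

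\emph{Type (3): non-flatness.} Here $\hat{\BO}_{C,q}\simeq\CC[[u,v]]/(uv)$ with $u=x^2$, $v=y^2$ is no longer regular, so there is no reason to expect flatness, and I would show it fails by a fibre-length comparison. The morphism $\pi$ is finite, so if it were flat at $p$ then $\pi_\ast\BO_{\tilde{C}}$ would be a locally free $\BO_C$-module in a neighbourhood of $q$; since $\Spec\hat{\BO}_{C,q}$ is the spectrum of a local ring and hence connected, its rank --- equivalently the length of the fibre $\pi^{-1}(q')$ --- would be constant for $q'$ near $q$. I would then compute this length in two places. Over the node $q$ itself the fibre is $\Spec$ of
\[
\CC[[x,y]]/(xy,\,x^2,\,y^2),
\]
which has $\CC$-dimension $3$, with basis $1,x,y$. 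On the other hand, near a point $q'$ on one of the two branches of $C$ at $q$ but distinct from $q$ --- say on $\{v=0\}$, corresponding to $u=c\neq 0$ --- the cover is, in the coordinate $x$ on the corresponding branch of $\tilde{C}$, the map $x\mapsto x^2=u$, which is unramified of degree $2$ away from $x=0$; hence $\pi^{-1}(q')$ has length $2$. The jump from $2$ to $3$ contradicts the local constancy of the rank, so $\pi$ is not flat at $p$.

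The only delicate point is the type (3) argument, and there the subtlety is purely bookkeeping: one must use the non-reduced scheme-theoretic fibre when computing lengths, and note that $\Spec\hat{\BO}_{C,q}$ is connected so that constancy of the rank of a locally free sheaf genuinely applies. Everything in types (1) and (2) is then a direct reading-off of the normal forms.
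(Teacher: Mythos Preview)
Your proof is correct and follows essentially the same approach as the paper: both work in the completed local rings and decide flatness via the fibre lengths of the finite map. The paper only spells out type (2), computing $\hat{\BO}_{\tilde{C},p}\otimes k(q)\simeq\CC[x]/(x^2)$ to conclude local freeness of rank $2$; your argument is more thorough, handling types (1) and (3) explicitly and offering the miracle-flatness/explicit-basis alternatives for type (2).
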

\begin{proof}
    This can be checked in the completion of the local ring. For example, for a singularity of type (2) we have
    \[ \hat{\BO}_{\tilde{C},p}\otimes k(q) = \CC[[x,y]]/(xy)\otimes_{\CC[[x+y]]}\CC[[x+y]]/(x+y)=\CC[x]/(x^2)\,. \]
    Thus $\pi_\ast \BO_{\tilde{C}}$ is locally free of rank $2$ at $q$, thus $\pi$ is flat at $q$.
\end{proof}
\begin{definition}\label{Def: Coverings of type (1,2,3) and (ast)}
We say a double covering $\pi:\tilde{C}\to C$ is of type (1,2) (resp. of type 3) if $\pi$ is of type (1) or (2) (resp. 3) at all singular points. \\
We say $\pi:\tilde{C}\to C$ is of type $(\ast)$ if it is of type (3) at the singular points and étale everywhere else. This corresponds to Beauville's definition of $(\ast)$ \cite[Sec. 3]{Beauville1977}.
\end{definition}
From the above discussion, we immediately have the following:
\begin{proposition}\label{Prop: definition of delta}
    Let $\pi:\tilde{C}\to C$ be a double cover of nodal curves, and $\sigma:\tilde{C}\to \tilde{C}$ the associated involution. Then
    \[ \pi_\ast \BO_{\tilde{C}}=\BO_C\oplus \mathscr{F} \,, \]
    for some rank $1$ torsion free sheaf $\mathscr{F}$. $\BO_C$ (resp. $\mathscr{F}$) is the $+1$ (resp. $-1$) eigenspace under the action of $\sigma$ on $\pi_\ast \BO_{\tilde{C}}$. Moreover, $\mathscr{F}$ is a line bundle if and only if $\pi$ is of type (1,2), and then we call
    \[ \delta \coloneqq \mathscr{F}^{-1} \]
    the line bundle associated to $\pi$.
\end{proposition}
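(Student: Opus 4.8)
The plan is to obtain the eigensheaf decomposition formally, identify each summand, and then translate "line bundle" into "flat morphism" so as to invoke Proposition~\ref{Prop: sing of type (1,2) is flat}.

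\emph{Splitting off $\BO_C$.} First I would note that since $\sigma$ is an automorphism of $\tilde C$ over $C$, it induces a $\BO_C$-linear involution of the coherent sheaf $\pi_\ast\BO_{\tilde C}$, that is, an action of $\ZZ/2\ZZ$; over $\CC$ the idempotents $\tfrac12(1+\sigma)$ and $\tfrac12(1-\sigma)$ are defined and split $\pi_\ast\BO_{\tilde C}$ as the direct sum of its $(+1)$- and $(-1)$-eigensheaves. Because $\pi$ is the double cover attached to $\sigma$, so that $C=\tilde C/\sigma$, the invariant subsheaf $(\pi_\ast\BO_{\tilde C})^\sigma$ is exactly $\BO_C$; I set $\mathscr F\coloneqq(\pi_\ast\BO_{\tilde C})^{-}$, which gives $\pi_\ast\BO_{\tilde C}=\BO_C\oplus\mathscr F$ with $\mathscr F$ the $(-1)$-eigenspace.

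\emph{Rank one and torsion-free.} Next I would check that $\mathscr F$ is torsion-free of rank $1$. As a direct summand of $\pi_\ast\BO_{\tilde C}$ it is coherent, and it is torsion-free because $\tilde C$ is reduced (so $\BO_{\tilde C}$ is torsion-free) and $\pi$ is finite, whence $\pi_\ast\BO_{\tilde C}$ is torsion-free over the reduced curve $C$. For the rank, localise at the generic point $\xi$ of a component of $C$: there $\BO_{C,\xi}$ is a field, $(\pi_\ast\BO_{\tilde C})_\xi$ is a $2$-dimensional étale algebra over it carrying the $\ZZ/2\ZZ$-action, the invariants have dimension $1$, and therefore $\mathscr F_\xi$ has dimension $1$. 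Hence $\mathscr F$ is rank one torsion-free.

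\emph{The line-bundle criterion.} Finally, since $\BO_C$ is a locally free direct complement of $\mathscr F$ inside $\pi_\ast\BO_{\tilde C}$, the sheaf $\mathscr F$ is invertible if and only if $\pi_\ast\BO_{\tilde C}$ is locally free of rank $2$; and for the finite morphism $\pi$ this holds precisely when $\pi$ is flat. At a point of $\tilde C$ lying over a smooth point of $C$ the morphism $\pi$ is either étale or simply ramified, hence flat; so flatness of $\pi$ is equivalent to flatness at all singular points, which by Proposition~\ref{Prop: sing of type (1,2) is flat} happens exactly when $\pi$ is of type (1) or (2) at every singular point, i.e. when $\pi$ is of type (1,2) in the sense of Definition~\ref{Def: Coverings of type (1,2,3) and (ast)}. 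In that case $\mathscr F$ is a line bundle and one sets $\delta\coloneqq\mathscr F^{-1}$. The only point requiring care is this last equivalence: one direction is immediate from Proposition~\ref{Prop: sing of type (1,2) is flat}, while the converse uses both the non-flatness at type-(3) nodes from that same proposition and the observation that $\pi$ is automatically flat over the smooth locus of $C$; the rest of the argument is formal.
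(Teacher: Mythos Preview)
Your argument is correct and is precisely the argument the paper has in mind: the paper does not give a separate proof but introduces the proposition with ``From the above discussion, we immediately have the following,'' where the ``above discussion'' is the classification of singularity types and Proposition~\ref{Prop: sing of type (1,2) is flat}. You have made that implicit reasoning explicit, using the same key step (line bundle $\Leftrightarrow$ $\pi_\ast\BO_{\tilde C}$ locally free $\Leftrightarrow$ $\pi$ flat $\Leftrightarrow$ type (1,2) via Proposition~\ref{Prop: sing of type (1,2) is flat}).
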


Let $\tilde{K}$ be the ring of rational functions on $\tilde{C}$, i.e. the product of the function fields of the irreducible components. Recall \cite[Sec. 3]{Beauville1977} that the group of Cartier divisors supported at a singular point $q\in \tilde{C}$ is $\tilde{K}^\ast_q/\BO^\ast_{\tilde{C},q}$ and we have an exact sequence
\begin{equation}\label{Equ: short exact sequence singular cartier divisors}
    0 \to \CC^\ast \to \tilde{K}^\ast_q /\BO_{\tilde{C},q} \overset{v_1,v_2}{\longrightarrow} \ZZ \oplus \ZZ \to 0 \,,
\end{equation} 
where $v_1,v_2$ are the valuations on $\tilde{K}$ at the two points sitting above $q$ in the normalization. The term on the left in the above sequence corresponds to the line bundles on $\tilde{C}$ obtained by taking the trivial line bundle on the normalization $\tilde{N}$, and identifying both fibers above $q$ with $\lambda\in \CC^\ast$.
\begin{remark}
    We can choose coordinates $x,y$ on both branches at a singular point $p\in \tilde{C}$ to split the above exact sequence and identify
    \[ \tilde{K}^\ast_q/\BO_{\tilde{C},q}\simeq \CC^\ast \times \ZZ \times \ZZ \]
    Then the involution acts in the following way in each of the three singularity types
\begin{enumerate}[Type 1)]
\item Using the coordinates $\sigma^\ast x$ and $\sigma^\ast y$ at $\sigma(p)$ we have
        \begin{align*} 
        \sigma(\lambda,m,n)_p&=(\lambda,m,n)_{\sigma(p)} \,,\\
        \pi^\ast(\lambda,m,n)_{\pi(p)}&=(\lambda,m,n)_p+(\lambda,m,n)_{\sigma(p)}\,.
        \end{align*}
\item Assume $\sigma^\ast x=y$, then
        \begin{align*}
            \sigma(\lambda,m,n)_p=(\lambda^{-1},n,m)_p\,, \quad \text{and} \quad \pi^\ast n[q]=(1,n,n) \,. 
        \end{align*}
\item Assume $\sigma^\ast x=-x$ and $\sigma^\ast y=-y$, and $u,v$ are coordinates near $q$ such that $\pi^\ast u=x^2$, $\pi^\ast v=y^2$. Then
\begin{equation*}
\sigma(\lambda, m, n)_p=((-1)^{m+n} \lambda, m, n)_p \,, \quad \text{and} \quad 
        \pi^\ast(\lambda, m, n)_q=(\lambda,2n,2m)_p \,. 
\end{equation*}
    \end{enumerate}
\end{remark}
We have the following analogue of the Hurwitz Theorem in the singular case (compare with \cite[Lem. 3.2 page 157]{Beauville1977}).
\begin{proposition}\label{Prop: definition ramification divisor}
    Let $\pi:\tilde{C}\to C$ be a double cover of nodal curves. We have
   \[ \omega_{\tilde{C}}=\pi^\ast \omega_C (R)\,, \]
    where $R$ is the Cartier divisor on $\tilde{C}$ defined by
    \begin{align*}
    R&\coloneqq R_{\mathrm{reg}}+R_{\sing} \\
    &=\sum_{\substack{q\in \tilde{C}_{\mathrm{sm}} \\\pi \text{ ramified at } q}} [q] + \sum_{\substack{q\in \Sing(\tilde{C}) \\\text{of type (2)}}} \pi^\ast([\pi(q)])+(-1)_q\,,
    \end{align*}
    where $(-1)_q\in \CC^\ast \subset \tilde{K}_q/\BO_{\tilde{C},q}$ corresponds to the trivial bundle on $\tilde{N}$ with both fibers above $q$ identified by $(-1)$, along with the trivial section (defined away from $q$). We call $R$ the \emph{ramification divisor} associated to $\pi$.
\end{proposition}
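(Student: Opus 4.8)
The plan is to treat both $\omega_{\tilde C}$ and $\pi^\ast\omega_C$ as line bundles on $\tilde C$ and to compare them by pulling back a single rational differential, which turns the asserted identity into a purely local statement at each point of $\tilde C$. Note that this approach deliberately avoids any appeal to flatness of $\pi$ (which fails at type~(3) nodes by Proposition~\ref{Prop: sing of type (1,2) is flat}): all one uses is that $\omega_C$ and $\omega_{\tilde C}$ are invertible, which holds for any nodal curve.

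First I would fix a nonzero rational section $\omega$ of $\omega_C$, i.e.\ a meromorphic $1$-form on $C$. Its pullback $\pi^\ast\omega$ is simultaneously a rational section of $\pi^\ast\omega_C$ and of $\omega_{\tilde C}$, so setting $R \coloneqq \divv_{\omega_{\tilde C}}(\pi^\ast\omega)-\pi^\ast\!\big(\divv_{\omega_C}(\omega)\big)$ and using $\divv_{\pi^\ast\omega_C}(\pi^\ast\omega)=\pi^\ast\divv_{\omega_C}(\omega)$ gives at once $\omega_{\tilde C}\cong\pi^\ast\omega_C(R)$; the further identity $\divv(\pi^\ast f)=\pi^\ast\divv(f)$ shows $R$ does not depend on $\omega$. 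Moreover, for $q\in\tilde C$ the local part $R_q\in\tilde K_q^\ast/\BO_{\tilde C,q}^\ast$ equals $[\,\pi^\ast\xi/\eta_q\,]$, where $\xi$ is a local generator of $\omega_C$ near $\pi(q)$ and $\eta_q$ one of $\omega_{\tilde C}$ near $q$ — such generators exist because the dualizing sheaf of a nodal curve is invertible, regardless of whether $\pi(q)$ is smooth or a node. So it remains to compute $R_q$ in each local model.

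I would then run through the cases using the completed local rings from the classification of the three singularity types and from the Remark preceding the statement. If $q$ is a smooth point where $\pi$ is étale, or a type~(1) node (where $\pi$ is a local isomorphism onto its image), then $\pi^\ast\xi$ is again a generator of $\omega_{\tilde C}$, so $R_q=0$. If $q$ is a smooth ramification point, then locally $\pi$ is $t\mapsto x^2$ and $\pi^\ast(dt)=2x\,dx$, so $R_q=[q]$, producing $R_{\mathrm{reg}}$. If $q$ is a type~(3) node, then $\pi^\ast u=x^2$, $\pi^\ast v=y^2$ force $\pi^\ast$ of the generator $\big(\tfrac{du}{u},-\tfrac{dv}{v}\big)$ of $\omega_C$ to equal $2$ times the generator $\big(\tfrac{dx}{x},-\tfrac{dy}{y}\big)$ of $\omega_{\tilde C}$, so $R_q=0$ — which is why type~(3) nodes do not appear in the formula. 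The only delicate case is a type~(2) node $q$: here $\pi(q)$ is smooth with coordinate $u=x+y$, so $du$ pulls back to $dx$ on the $x$-branch and to $dy$ on the $y$-branch, while the generator $\eta_q$ of $\omega_{\tilde C}$ is $\tfrac{dx}{x}$ on the $x$-branch and $-\tfrac{dy}{y}$ on the $y$-branch (the minus sign being the residue-compatibility condition defining $\omega_{\tilde C}$). Hence $\pi^\ast du/\eta_q$ has valuation $1$ on both branches with opposite leading constants, i.e.\ it is the class $(-1,1,1)$ in the splitting of~\ref{Equ: short exact sequence singular cartier divisors}; comparing with $\pi^\ast([\pi(q)])=[\,x+y\,]=(1,1,1)_q$ and with $(-1)_q=(-1,0,0)$, this reads exactly $R_q=\pi^\ast([\pi(q)])+(-1)_q$.

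Summing the local contributions over all $q\in\tilde C$ gives $R=R_{\mathrm{reg}}+R_{\sing}$ with $R_{\sing}$ as stated. The main obstacle is purely bookkeeping in the type~(2) case: one has to keep track both of the residue sign hard-wired into $\omega_{\tilde C}$ and of the chosen splitting of~\ref{Equ: short exact sequence singular cartier divisors}, so as to identify the extra twist precisely with $(-1)_q$ rather than with the trivial class; all the other local models are short unwindings of the normal forms already recorded above.
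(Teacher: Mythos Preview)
Your proof is correct and follows essentially the same approach as the paper: a local comparison of generators of $\omega_{\tilde C}$ and $\pi^\ast\omega_C$ in each of the four local models, with the only nontrivial case being type~(2). Your setup via a global rational section and your explicit identification of the type~(2) contribution with $(-1,1,1)=\pi^\ast([\pi(q)])+(-1)_q$ are slightly more detailed than the paper, which simply records $\pi^\ast(du)=(x-y)\tfrac{dx}{x}$ and leaves the reader to unpack the divisor of $x-y$; but the content is the same.
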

\begin{proof}
    At smooth points $p\in \tilde{C}$, the result is classical. It is also obvious that the equality holds at singular points of type (1). Let $p\in \tilde{C}$ be a singular point of type (3) for $\sigma$. Using the notations above, a generator of $\omega_{C,q}$ is 
    \[ \frac{du}{u}=-\frac{dv}{v}\,, \quad \text{and} \quad \pi^\ast \left(\frac{du}{u}\right)= \frac{ dx^2}{x^2}=\frac{2dx}{x} \,, \]
    which is a generator of $\omega_{\tilde{C},p}$. Finally, if $p$ is a singular point of type (2) for $\sigma$, a generator of $\omega_{C,q}$ is $du$ and we have
    \[ \pi^\ast(du)=dx+dy=(x-y)\frac{dx}{x} \,. \]
\end{proof}

\begin{proposition}\label{Prop: pullback delta in case 1,2)}
    Let $\pi: \tilde{C}\to C$ be a morphism of type (1,2). Let $\delta$ and $R$ be the line bundle and ramification divisor associated to $\pi$. Then
    \begin{align*}
    \omega_{\tilde{C}}&=\pi^\ast(\omega_C\otimes \delta)\,, \\
        \pi^\ast \delta &= \BO_{\tilde{C}}(R) \,,\\
        \quad \delta^{\otimes 2}&=\BO_C(\pi_\ast R) \,. 
    \end{align*} 
\end{proposition}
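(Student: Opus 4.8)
The plan is to reduce all three identities to the single statement $\pi^\ast\delta=\BO_{\tilde C}(R)$. Granting this, the first identity is immediate from Proposition~\ref{Prop: definition ramification divisor}, since
\[
\omega_{\tilde C}=\pi^\ast\omega_C(R)=\pi^\ast\omega_C\otimes\BO_{\tilde C}(R)=\pi^\ast\omega_C\otimes\pi^\ast\delta=\pi^\ast(\omega_C\otimes\delta).
\]
For the third identity, recall that $\pi$ is finite flat of degree $2$ by Proposition~\ref{Prop: sing of type (1,2) is flat}, so there is a norm homomorphism $\Nm_\pi\colon\Pic(\tilde C)\to\Pic(C)$ satisfying $\Nm_\pi\circ\pi^\ast=(\,\cdot\,)^{\otimes2}$ and $\Nm_\pi(\BO_{\tilde C}(R))=\BO_C(\pi_\ast R)$ (the norm of an effective Cartier divisor under a finite flat map); hence $\delta^{\otimes2}=\Nm_\pi(\pi^\ast\delta)=\Nm_\pi(\BO_{\tilde C}(R))=\BO_C(\pi_\ast R)$.

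It remains to prove $\pi^\ast\delta=\BO_{\tilde C}(R)$, and the idea is to exhibit a canonical global section of $\pi^\ast\delta$ whose divisor is exactly $R$. By Proposition~\ref{Prop: definition of delta} we have $\pi_\ast\BO_{\tilde C}=\BO_C\oplus\mathscr F$ with $\mathscr F=\delta^{-1}$ a line bundle; pulling back the inclusion of the $(-1)$-eigensheaf $\mathscr F\hookrightarrow\pi_\ast\BO_{\tilde C}$ and composing with the evaluation map $\pi^\ast\pi_\ast\BO_{\tilde C}\to\BO_{\tilde C}$ gives a morphism $\pi^\ast\delta^{-1}\to\BO_{\tilde C}$, i.e. a section $s\in\HH^0(\tilde C,\pi^\ast\delta)$, which is an isomorphism on the étale locus and hence nonzero. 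The claim $\divv(s)=R$ is local on $\tilde C$; since $\pi$ is of type $(1,2)$ there are no type-$(3)$ points, so I would check it at the remaining point types using the explicit local models recorded before Proposition~\ref{Prop: definition ramification divisor}. Where $\pi$ is étale — a smooth point or a type-$(1)$ node — a local generator of $\mathscr F$ maps to a unit of $\BO_{\tilde C}$, so $s$ is a local generator and $\divv(s)$ vanishes there, as does $R$. At a smooth ramification point, in coordinates with $\pi^\ast w=x^2$ the generator of $\mathscr F$ maps to $x$, so $\divv(s)=[p]$, matching the contribution of $R_{\mathrm{reg}}$.

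The substantial case — and the main obstacle — is a type-$(2)$ node $p$, where $\hat\BO_{\tilde C,p}=\CC[[x,y]]/(xy)$ with $\sigma^\ast x=y$ and $\hat\BO_{C,\pi(p)}=\CC[[u]]$ for $u=x+y$. Here the $(-1)$-eigensheaf of $\pi_\ast\BO_{\tilde C}$ is locally generated by $x-y$, so near $p$ the section $s$ corresponds to the ideal $(x-y)\subset\CC[[x,y]]/(xy)$. Writing $x-y=(x+y)\cdot g$, where $g$ is the rational function on $\tilde C$ restricting to $1$ on one branch and to $-1$ on the other, one obtains $\divv(s)=\pi^\ast([\pi(p)])+(-1)_p$ near $p$, which is precisely the type-$(2)$ contribution to $R$ in Proposition~\ref{Prop: definition ramification divisor}. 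The point requiring care is the identification of the $\CC^\ast$-twist: one must check that the class of $g$ in $\CC^\ast\subset\tilde K^\ast_{\pi(p)}/\BO^\ast_{\tilde C,\pi(p)}$, the left-hand term of \eqref{Equ: short exact sequence singular cartier divisors}, is the element $(-1)_p$ with respect to the normalization of the splitting fixed in the remark following that sequence — the $\pm1$ ambiguity is exactly what makes this finer than the classical (smooth) Hurwitz computation and what a cruder argument (e.g. squaring to reduce to the branch divisor) would fail to see. Once $\divv(s)=R$ is verified at every point, $\pi^\ast\delta=\BO_{\tilde C}(\divv s)=\BO_{\tilde C}(R)$, and the proposition follows.
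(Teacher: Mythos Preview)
Your argument is correct, but the route differs from the paper's. The paper establishes the \emph{first} identity directly, via relative duality for the finite flat map $\pi$: from $\pi_\ast\BO_{\tilde C}=\BO_C\oplus\delta^{-1}$ one computes $\sheafhom_C(\pi_\ast\BO_{\tilde C},\omega_C)=\omega_C\oplus(\omega_C\otimes\delta)=\pi_\ast\pi^\ast(\omega_C\otimes\delta)$, hence $\omega_{\tilde C}=\pi^!\omega_C=\pi^\ast(\omega_C\otimes\delta)$. Combining this with Proposition~\ref{Prop: definition ramification divisor} (which gives $\omega_{\tilde C}=\pi^\ast\omega_C(R)$) yields $\pi^\ast\delta=\BO_{\tilde C}(R)$, and then $\delta^{\otimes2}=\Nm_\pi(\pi^\ast\delta)=\BO_C(\pi_\ast R)$ as you also argue. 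So the logical flow is reversed: the paper deduces identity~2 from identity~1, while you deduce identity~1 from identity~2.

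The trade-off is that the duality argument avoids any local analysis at type-(2) nodes --- the delicate $(-1)_p$ twist you had to track simply never appears, since it is already baked into Proposition~\ref{Prop: definition ramification divisor}. Your approach, on the other hand, is entirely elementary and self-contained: it does not invoke $\pi^!$ or Grothendieck duality, and it makes the section of $\pi^\ast\delta$ cutting out $R$ completely explicit. Your verification at the type-(2) node is correct (in the splitting fixed by the paper, $(\lambda,m,n)$ is the class of $x^m+\lambda y^n$, so $x-y$ represents $(-1,1,1)=\pi^\ast[\pi(p)]+(-1)_p$), so no gap remains.
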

\begin{proof}
    $\pi$ is flat by Proposition \ref{Prop: singularities are quadratic of maximal rank}. We have
    \begin{align*}
       \sheafhom_E(f_\ast \BO_{\tilde{C}},\omega_C) &= \sheafhom_C(\BO_C\oplus \delta^{-1},\omega_C) \\
       &=\omega_C\otimes(\BO_C\oplus \delta) \\
       &=\pi_\ast(\pi^\ast(\omega_C\otimes \delta))\,.
    \end{align*}
    Thus by Serre duality for finite flat morphism (see \cite[Ex. III 6.10,7.2]{hartshorne}) we have
     \[ \omega_{\tilde{C}}=\pi^! \omega_C = \pi^\ast(\omega_C \otimes \delta) \,. \]
    The last two equalities follow from Proposition \ref{Prop: definition ramification divisor} and the fact that $\delta^{\otimes 2}=\Nm\circ \pi^\ast \delta$.
       
\end{proof}

Recall that by an effective divisor we mean a divisor associated to a regular section of some line bundle. On the smooth part, this is equivalent to having a positive coefficient in front of every point. The effective divisors supported on a singular point $q\in \tilde{C}$ correspond to $\{1\}\cup (v_1,v_2)^{-1}(\ZZ_{>0}\oplus \ZZ_{>0})$ under the identification \ref{Equ: short exact sequence singular cartier divisors}. We call an effective divisor $D$ on $\tilde{C}$ $\pi$-\emph{simple} if there is no effective divisor $F$ on $C$ such that
\[ D-\pi^\ast F \]
is effective. We have the following generalization of \cite[page 338]{Mumford1974}:
\begin{proposition}\label{Prop: Pullback line bundle sequence singular case}
    Let $\pi:\tilde{C}\to C$ be a double covering of nodal curves. Suppose $\pi_\ast \BO_{\tilde{C}}=\BO_C\oplus \mathscr{F}$. Let $L$ be a line bundle on $C$ and $D$ an effective $\pi$-simple divisor on $\tilde{C}$, supported away from the singular points of type (3) for $\pi$. Then there is an exact sequence
    \[ 0\to L \to \pi_\ast(\pi^\ast L(D)) \to L(\pi_\ast D)\otimes \mathscr{F} \to 0 \,. \]
\end{proposition}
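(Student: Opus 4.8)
The statement is the nodal analogue of Mumford's classical exact sequence, which in the smooth case was recalled as Proposition \ref{Prop: covering pullback line bundle sequence}. The plan is to reduce everything to a local computation at a point, since exactness of a sequence of coherent sheaves can be checked stalk-by-stalk, and away from the support of $D$ and the singular points the sequence is just the pushforward of the fundamental sequence $0\to \BO_{\tilde C}\to \BO_{\tilde C}\to \pi^\ast L(D)\restr{\text{away}}\to 0$ twisted appropriately, which on $C$ reads $0\to L\to \pi_\ast\pi^\ast L\to L\otimes\FF\to 0$ by Proposition \ref{Prop: definition of delta}. So the real content is local at the points of $D$ and at the singularities of type (1) and (2) (type (3) singularities are excluded from the support of $D$ by hypothesis, and there $\pi$ is not even flat).

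First I would set up the map $\pi_\ast(\pi^\ast L(D))\to L(\pi_\ast D)\otimes \FF$. There is a natural inclusion $\pi^\ast L \hookrightarrow \pi^\ast L(D)$ whose pushforward gives $L \oplus \FF\otimes L \hookrightarrow \pi_\ast\pi^\ast L(D)$, and then $L\hookrightarrow \pi_\ast\pi^\ast L(D)$ is the first map of the claimed sequence (it is the composite of the unit $L\to \pi_\ast\pi^\ast L$ with the inclusion just described); injectivity is clear since all these are torsion-free and the map is generically the diagonal inclusion of a summand. For the quotient, one checks it is a rank-one torsion-free sheaf of the right degree: by flatness of $\pi$ at the relevant points (Proposition \ref{Prop: sing of type (1,2) is flat}) $\pi_\ast$ of a line bundle twisted by an effective Cartier divisor of degree $\deg D$ has degree $\deg L\cdot 2 + \deg D + \deg \FF$ computed via Riemann--Roch for the finite flat morphism, so the cokernel has the degree of $L(\pi_\ast D)\otimes\FF$, and one produces the surjection explicitly by ``dividing by a local equation of $D$'' along the $-1$-eigensheaf. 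Concretely, at a smooth point $q$ of $\tilde C$ lying over a smooth point of $C$ with $D = m[q]$, choose a local parameter; the module $\pi_\ast(\pi^\ast L(D))$ over $\BO_{C}$ is free of rank $2$, the sub $L$ is one free summand, and the quotient is the other summand shifted by $m$ (or $2$-adically, depending on whether $q$ is a ramification point) — which is exactly $L\otimes\FF(\pi_\ast D)$ locally. The case $D$ supported at a type-(1) node is handled by the same bookkeeping with the local description $\hat\BO_{\tilde C,p}=\CC[[x,y]]/(xy)$ exchanged with $\hat\BO_{\tilde C,\sigma(p)}$, and the type-(2) node similarly using $\hat\BO_{C,q}=\CC[[u]]$, $\hat\BO_{\tilde C,p}=\CC[[x,y]]/(xy)$, $u = x+y$.

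The one genuine subtlety — and the step I expect to be the main obstacle — is the interplay between the hypothesis that $D$ is $\pi$-\emph{simple} and the torsion-freeness of the quotient $L(\pi_\ast D)\otimes\FF$. In the smooth case $\pi$-simplicity is what guarantees that the naive quotient has no embedded torsion; in the nodal case one must check that, at a type-(2) node $q$ with preimage the single point $p$, if $D$ has positive multiplicity along one branch only (as it must, being simple, since $\pi^\ast[\pi(p)]$ hits both branches equally), then the quotient sheaf is still the expected torsion-free sheaf $L(\pi_\ast D)\otimes\FF$ and not something with torsion at $q$. This is where the explicit local-ring computation with the valuations $v_1,v_2$ from the exact sequence \eqref{Equ: short exact sequence singular cartier divisors} does the work: one writes $D$ near $p$ as $(\lambda,m_1,m_2)$ with $m_1,m_2$ not both positive after subtracting $\pi^\ast$ of anything, computes $\pi_\ast$ of the corresponding rank-one module, and matches it against $L(\pi_\ast D)\otimes\FF$, which near $q$ is $\BO_C(u^{-(m_1+m_2)})\otimes\FF$ or similar. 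Once this matching is verified at each of the finitely many special points and the generic computation is recorded, exactness follows by comparing stalks everywhere, completing the proof.
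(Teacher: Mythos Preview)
Your overall strategy---reduce to a local, case-by-case computation at smooth points and at nodes of types (1) and (2)---is exactly the paper's approach, so the architecture is right. But there is a genuine gap in the type-(2) case, which is the heart of the matter.

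Your description of $\pi$-simplicity at a type-(2) node is incorrect. You write that $D$ ``has positive multiplicity along one branch only (as it must, being simple)'' and then that one can take $(m_1,m_2)$ ``not both positive after subtracting $\pi^\ast$ of anything''. But an \emph{effective} Cartier divisor supported at a node $p$ must have strictly positive valuation on \emph{both} branches (cf.\ the description just before \eqref{Equ: short exact sequence singular cartier divisors}); there is no such thing as an effective Cartier divisor touching only one branch of a node. At a type-(2) node, $\pi^\ast[\pi(p)]$ is the divisor $(1,1,1)_p$, and the $\pi$-simple effective divisors supported at $p$ are exactly $(\lambda,1,1)_p$ with $\lambda\neq 1$, and $(\lambda,1,2)_p$, $(\lambda,2,1)_p$ with $\lambda\in\CC^\ast$. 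Your local model would therefore not match what actually occurs, and the ``matching of stalks'' you propose would not go through as stated.

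The paper handles this by a uniform device you do not mention: it embeds $\pi_\ast(\pi^\ast L(D))$ into $\pi_\ast(\pi^\ast L(\pi_\ast D))=L(\pi_\ast D)\oplus L(\pi_\ast D)\otimes\mathscr F$, observes that locally the middle sheaf is generated over $L\oplus L\otimes\mathscr F$ by one extra section $s$ (namely $1/t$ where $\divv t=D$), and computes that the projection of $s$ to the $-1$ eigenspace is $\sigma^\ast t - t$. The whole proof then reduces to checking, in each local model, that $\sigma^\ast t - t$ is a local generator of $L(\pi_\ast D)\otimes\mathscr F$. At a type-(2) node with $D=(\lambda,1,1)_p$ and $t=x+\lambda y$ this gives $(\lambda-1)(x-y)$, which is a unit times a generator precisely because $\lambda\neq 1$; with $D=(\lambda,1,2)_p$ one gets $(x-y)(-1+\lambda(x+y))$. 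This is where $\pi$-simplicity is actually used, and it is the computation your proposal would need to replace the mischaracterised local picture with.
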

\begin{proof}
    We adapt the proof in \cite[page 338]{Mumford1974} to our case. Let $L$ be a line bundle on $C$ and $D$ a $\pi$-simple divisor. Recall that
    \[ \pi_\ast(\pi^\ast(L))=L\oplus (L\otimes \mathscr{F}) \,, \]
    and that under this identification $L$ (resp. $L\otimes \mathscr{F}$) is the $+1$ (resp.$-1$)-eigenspace under the action of the involution $\sigma:\tilde{C}\to \tilde{C}$ associated to $\pi$. We thus have the following inclusion of sheaves
    \begin{center}
        \begin{tikzcd}
            \pi_\ast(\pi^\ast(L(\pi_\ast D))) \arrow[r,phantom,"\simeq"] & L(\pi_\ast D) \arrow[r,phantom,"\oplus"] & L(\pi_\ast D)\otimes \mathscr{F} \\
            \pi_\ast(\pi^\ast L(D)) \arrow[u,phantom,"\cup"] & \cup & \cup \\
            \pi_\ast(\pi^\ast(L))\arrow[u,phantom,"\cup"] \arrow[r,phantom ,"\simeq"] & L \arrow[r,phantom,"\oplus"] & L\otimes \mathscr{F}\,.
        \end{tikzcd}
    \end{center}

At a point $p$ where $D$ is non-trivial, the sheaf $\pi_\ast(\pi^\ast L(D))$ is generated (as an $\BO_C$-module) by $L\oplus(L\otimes \mathscr{F})$ plus a section $s$. Let $t$ be a section on $\tilde{C}$ with divisor $D$. Let $q=\pi(p)$. We can assume that all line bundles a trivial on a neighborhood $U$ of $q$ and on $V=\pi^{-1}(U)$. We can then assume $s=1$ in the trivialization of $\pi^\ast L(D)$, and the image of $s$ under the injection
\[ \pi^\ast L(D)\hookrightarrow \pi^\ast(L(\pi_\ast D)) \]
is $\sigma^\ast t$. Thus the projection of $s$ onto $\pi_\ast \pi^\ast(L(\pi_\ast D))^{-}$ is
\[ \sigma^\ast t-t \,. \]
We can check that $\sigma^\ast t-t$ generates $\pi_\ast \pi^\ast(L(\pi_\ast D))^{-}$ in every possible configuration of $D$:
\begin{itemize}
    \item If $p$ is a smooth non-ramified point, this is trivial.
    \item If $p$ is a smooth ramification point of $\pi$ then necessarily $t=z$ for some coordinate $z$, and $\sigma^\ast t-t=-2z$.
    \item If $p$ is a singular point of type (1), then say $p'=\sigma(p)$. The only effective $\pi$-simple divisors supported at $p,p'$ are of the type $D=(\lambda,n,m)_p\in \CC^\ast \times \ZZ_{>0}\times \ZZ_{>0}$. But then 
    \[ \divv(\sigma^\ast t - t)=1 \,. \]
    \item If $p$ is a singular point of type (2), then the only effective $\pi$-simple divisors supported at $p$ are of the form $(\lambda,1,1)_p$ with $\lambda\neq 1$ (resp. $(\lambda,1,2)_p,(\lambda,2,1)_p$ with $\lambda\in \CC^\ast$). We have
    \begin{align*}
    \sigma^\ast t-t &= \lambda x+ y-(x+\lambda y)=(\lambda-1)(x-y) \\
    (resp. &= \lambda x^2+y-(x+\lambda y^2)=(x-y)(-1+\lambda(x+y)) \,)\,,
    \end{align*}
    for some coordinates $x,y$ at $p$.
\end{itemize}
\end{proof}
\begin{remark}
    The above proposition fails if $D$ intersects points of type (3). For instance, if $D=(\lambda,1,1)_p$ for some point $p\in \tilde{C}$ of type (3), then using the notations of the proof above we have
    \[ \sigma^\ast t - t =(-x-\lambda y)-(x+\lambda y)=-2(x+\lambda y) \,. \]
    But the local generators for $\pi_\ast(\pi^\ast (L(\pi_\ast D)))^{-}$ at $q$ are $x,y$.
\end{remark}

\begin{remark}
The above discussion carries out to families of double coverings of connected nodal curves $\tilde{\cC}\overset{\pi}{\longrightarrow} \cC \longrightarrow S$ (by this we mean that the restriction over each fiber $s\in S$ is a double covering of connected nodal curves in the usual sense), where we assume that $\tilde{\cC}\to S$ (and thus $\cC\to S$) is flat. We will assume $S$ to be a smooth curve. The relative dualizing sheaves $\omega_{\tilde{\cC}/S}$ and $\omega_{\cC/S}$ are line bundles, and there is an exact sequence 
\[ 0\to \pi^\ast \omega_{\cC/S}\to \omega_{\tilde{\cC}/S} \to \BO_{\mathcal{R}_S} \to 0 \]
for some Cartier divisor $\mathcal{R}_S\subset \tilde{\cC}$. We call $\mathcal{R}_S$ the \emph{relative ramification divisor}. We define
\[ \Delta_S \coloneqq \pi_\ast \mathcal{R}_S \]
to be the \emph{relative branch divisor}. Notice that since singular points of type (2) are mapped to smooth points, $\Delta_S$ is supported on the relative smooth locus of $\cC$. For $s\in S$, the restriction $\mathcal{R}_s$ corresponds to the ramification divisor of $\pi_s$ defined in Proposition \ref{Prop: definition ramification divisor}. For $i\in\{1,2,3\}$, we will say $\pi$ is of type $(i)$ if $\pi_s:\tilde{C}_s\to C_s$ is of type $(i)$ for all $s\in S$. By Proposition \ref{Prop: sing of type (1,2) is flat}, $\pi$ is flat if and only if it is of type $(1,2)$. In that case 
\[ \pi_\ast \BO_{\tilde{\cC}}=\BO_{\cC}\oplus \delta^{-1} \]
for some line bundle $\delta$ on $\cC$ and we have the analogue of Proposition \ref{Prop: pullback delta in case 1,2)}
\begin{align*}
\pi^\ast (\omega_{\cC/S}\otimes \delta) = \omega_{\tilde{\cC}/S}\,, \\
\pi^\ast \delta = \BO_{\tilde{\cC}}(\mathcal{R})\,, \\
\delta^{\otimes 2} = \pi_\ast \mathcal{R} \eqqcolon \Delta\,.
\end{align*}
\end{remark}


\subsection{Degenerations in \texorpdfstring{$\EE_{g,t}$}{Egt}}\label{Subsec: Degenerations in Egt}
We will now investigate the ppav's in the closure of the loci $\EE_{g,t}^\ast$ studied in Section \ref{Sec: Construction of the Families Eg,t}. The main result is Lemma \ref{Lemma: decomposition of Egt}, in which we completely characterize such degenerations. The main ingredient in the proofs is Beauville's theory of generalized Prym varieties \cite{Beauville1977}. \par 
Let $(P,\Xi)\in {\EE}_{g,t}\setminus \EE_{g,t}^\ast$. There is a deformation of $(P,\Xi)$ with general member in $\EE_{g,t}^\ast$, thus by \cite[Section 6]{Beauville1977} and \cite[p. 209]{arbarello2}, there is a smooth curve $S$ and a family of stable curves $q:\tilde{\cC}\to S$ and involutions $\sigma,\sigma',\sigma'':\tilde{\cC}\to \tilde{\cC}$ such that
\begin{center}
    \begin{tikzcd}
        & \tilde{\cC} \arrow[dl,"\pi"'] \arrow[d,"\pi'"] \arrow[dr,"\pi''"] \arrow[dd,bend right, "f"'] & \\
        \cC \arrow[dr,"p"'] & \cC'\arrow[d,"p'"] & \cC'' \arrow[dl,"p''"] \\
        & \mathcal{E} \arrow[d] & \\
        & S &
    \end{tikzcd}
\end{center}

\begin{enumerate}
    \item For $s\neq 0$, $\tilde{\cC}_s$ is smooth.
    \item $\sigma=\sigma'\circ \sigma''$.
    \item $\cC'\coloneqq \tilde{\cC}/\sigma'\to S $ (resp. $\cC''\coloneqq \tilde{\cC}/\sigma''\to S$) is a flat family of nodal curves of genus $t+1$ (resp. $g-t+1$).
    \item $\eE\coloneqq \tilde{\cC}/\langle \sigma, \sigma' \rangle \to S$ is a flat family of nodal curves of genus $1$.
    \item For $\mu\in \{\emptyset,',''\}$, $p^\mu:\cC^\mu \to \eE$ corresponds to an involution $\tau^\mu:\cC^\mu\to\cC^\mu$.
    \item For each $s\in S$, $\mu\in\{\emptyset,',''\}$, $\sigma_s^\mu:\tilde{\cC}_s\to \tilde{\cC}_s$ and $\tau^\mu_s:\cC_s\to\cC_s$ are not the identity on any irreducible component.
\end{enumerate}
The flatness of the families $\cC^\mu\to T$ and $\eE\to T$ follows from the argument in \cite[p. 210]{arbarello2}. The last assertion is a consequence of \cite[p. 176]{Beauville1977}. Let
\[ \tilde{C}\coloneqq \tilde{\mathcal{C}}_0 \,, \quad \text{and} \quad C\coloneqq \mathcal{C}_0 \,. \]
Thus $P$ is the Prym corresponding to $\pi_0:\tilde{C}\to C$. Moreover, under the assumption that $(P,\Xi)$ is neither a Jacobian nor decomposable, by \cite{Beauville1977} we can assume that $\pi_0$ is of the following type
\begin{center}
$(\ast)$ \quad $\pi_0$ is of type (3) at the singular points, and étale everywhere else.
\end{center}
We have the following:
\begin{proposition}\label{Prop: pi of type (3) and p of type (1,2) etc}
If $\pi$ is of type $(3)$, then $p'$, $p''$ are of type $(3)$ and $\pi'$, $\pi''$, $p$ are of type $(1,2)$.
\end{proposition}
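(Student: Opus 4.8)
The plan is to analyze the Klein four-group $G=\langle\sigma,\sigma'\rangle$ acting on $\tilde{C}$ locally at each node, read off the types of $\sigma'$ and $\sigma''$ there (these control $\pi'$ and $\pi''$), and then push the information down to the quotients to control $p$, $p'$ and $p''$. The starting observation is that, since $\pi$ is of type $(3)$, every singular point of $\tilde{C}$ is a node $p$ fixed by $\sigma$, at which $\sigma$ preserves the two branches and acts by $-1$ on each: one may choose $\hat{\BO}_{\tilde{C},p}\cong\CC[[x,y]]/(xy)$ with $\sigma^\ast x=-x$, $\sigma^\ast y=-y$. In particular $\sigma$ fixes \emph{every} node of $\tilde{C}$, and $\Sing(C)=\pi(\Sing(\tilde{C}))$ is a set of nodes, one above each node of $\tilde{C}$.

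First I would settle $\pi'$ and $\pi''$. Fix a node $p$ of $\tilde{C}$. Since the image of a node under an automorphism is again a node and $\sigma$ fixes all nodes, $\sigma''(p)=\sigma(\sigma'(p))=\sigma'(p)$. If $\sigma'(p)\neq p$, then $\sigma'$ and $\sigma''$ both carry $p$ to the distinct node $\sigma'(p)$, so both are of type $(1)$ at $p$. If $\sigma'(p)=p$, I would show that $\sigma'$, and hence $\sigma''$, exchanges the two branches at $p$, i.e.\ is of type $(2)$ there: otherwise $\sigma'$ would preserve each branch, acting on each by $\pm1$; it cannot act by $+1$ on a branch, since a branch-preserving automorphism trivial on the tangent line of a formal branch is trivial on that branch and hence on the irreducible component carrying it, against the assumption that none of $\sigma,\sigma',\sigma''$ is the identity on a component; but then $\sigma'^\ast x=-x$ forces $\sigma''^\ast x=\sigma'^\ast(\sigma^\ast x)=\sigma'^\ast(-x)=x$, so $\sigma''$ would be trivial on the component of the $x$-branch, again a contradiction. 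Thus at every node $\sigma'$ and $\sigma''$ are of type $(1)$ or $(2)$, so $\pi'$ and $\pi''$ are of type $(1,2)$.

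Next I would treat $p'$ and $p''$, recalling that the involution of $C'\to E$ is the descent of $\sigma$ (and likewise $C''\to E$ is the descent of $\sigma$). A node of $C'$ can only come from a node $p$ of $\tilde{C}$ at which $\sigma'$ is of type $(1)$ or $(3)$, since a type-$(2)$ node of $\sigma'$ maps to a smooth point of $C'$; by the previous step type $(3)$ does not occur, so every node of $C'$ is $\pi'(p)$ with $\sigma'(p)\neq p$, where $\pi'$ is a local isomorphism near $p$. As $\sigma$ fixes $p$, preserves its branches and acts by $-1$ on each, its descent near $\pi'(p)$ is again of type $(3)$; hence $p'$, and symmetrically $p''$, is of type $(3)$. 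Finally, for $p$: its nodes are the $q=\pi(p)$ with $p$ a node of $\tilde{C}$, and the governing involution $\tau$ is the descent of $\sigma'$. If $\sigma'(p)\neq p$, then $\tau(q)=\pi(\sigma'(p))$ is a node of $C$ distinct from $q$, so $\tau$ is of type $(1)$ at $q$; if $\sigma'(p)=p$, then by the previous step $\sigma'$ swaps the branches at $p$, say $\sigma'^\ast x=\lambda y$ and $\sigma'^\ast y=\lambda^{-1}x$, and writing $\hat{\BO}_{C,q}\cong\CC[[u,v]]/(uv)$ with $u=x^2$, $v=y^2$ one gets $\tau^\ast u=\lambda^2 v$, $\tau^\ast v=\lambda^{-2}u$, so $\tau$ swaps the branches at $q$ and is of type $(2)$. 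Hence $p$ is of type $(1,2)$.

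The hard part will be the local analysis of the second paragraph, where the relation $\sigma''=\sigma\sigma'$ must be combined with the non-triviality of the involutions on the components to rule out $\sigma'$ (equivalently $\sigma''$) being of type $(3)$ at a node where $\sigma$ is. Once this is established, the assertions on $p$, $p'$ and $p''$ amount to reading off the descended involutions in the completed local rings at the nodes, which is routine.
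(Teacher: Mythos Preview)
Your proof is correct and follows essentially the same idea as the paper's, namely that at a node of $\tilde{C}$ the involution $\sigma$ already acts nontrivially on each branch, so neither $\sigma'$ nor $\sigma''$ can do the same without forcing one of the three involutions to be the identity on a component. The paper phrases this more tersely as ``no ramification of order $4$'' for the $(\ZZ/2\ZZ)^2$-action on the normalization (the inertia groups are cyclic, hence of order at most $2$, and $\sigma$ already fills this at each branch over a node), whereas you carry out the equivalent argument explicitly in the completed local rings and invoke assumption~(6) directly; your treatment of the descended involutions $\tau,\tau',\tau''$ is then more detailed than the paper's one-line remark, but the content is the same.
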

\begin{proof}
The Galois group associated to $\tilde{\cC}\to \eE$ is $\langle \sigma, \sigma', \sigma'' \rangle\simeq (\ZZ/2\ZZ)^2$, thus if we look at the action of $\langle \sigma,\sigma',\sigma''\rangle$ on the level of the normalization of the curves, there cannot be a ramification point of order $4$. Since $\pi$ is already of type $(3)$ at all the singular points, $p$ has to be of type $(1,2)$. The assertion then follow for $\pi',\pi'',p',p''$ by noticing that $\tau:\cC\to \cC$ corresponds to the action of $\sigma'$ (which equal to that of $\sigma''$) on pairs of points $\{P,\sigma(P)\}$ for $P\in \tilde{\cC}$ (and the same goes for $\tau'$ and $\tau''$).
\end{proof}
Thus by the previous Section, we can associate to $p$ a line bundle $\delta_S$ and a branch divisor $\Delta_S$ on $\eE$ (not intersecting the relative singular locus $\Sing(\eE/S)$), and a ramification divisor $\mathcal{R}_S$ on $\cC$, such that
\begin{equation}\label{Equ: identities of delta and relative omega_C}
\begin{split}
p^\ast (\omega_{\eE/S}\otimes \delta_S) = \omega_{{\cC}/S}\,, \\
p^\ast \delta_S = \BO_{\cC}(\mathcal{R}_S)\,, \\
\delta_S^{\otimes 2} = \BO_E(\Delta_S)\,.
\end{split}
\end{equation}
For $i\in\{',''\}$ define $\mathcal{R}^i_S \subset \cC^i$ to be the ramification divisor of $p^i$ defined in the previous section. By \ref{Prop: definition ramification divisor}, we have
\begin{align*}
    p^{i,\ast} \omega_{\eE/S}=\omega_{\cC^i/S}(\mathcal{R}_S^i)\,,\\
    \Delta^i_S \coloneqq p^i_\ast \mathcal{R}^i_S\,, \\
    \Delta_S=\Delta'_S+\Delta''_S\,.
\end{align*}
Let $\tilde{\mathcal{R}}'_S\subset \tilde{\cC}$ (resp. $\tilde{\mathcal{R}}''_S$) be the ramification divisor of $\pi'$ (resp. $\pi''$) and 
\begin{align*}  
\tilde{\Delta}_S' \coloneqq \pi'_\ast(\tilde{\mathcal{R}}_S')\,, \quad \tilde{\Delta}_S'' \coloneqq \pi''_\ast(\tilde{\mathcal{R}}_S'')
\end{align*}
be the associated branch divisors. Also by the previous section for $i\in\{',''\}$ there are line bundles $\tilde{\delta}^i_S$ associated to $\pi^i$ such that
\begin{align*}
    \pi'^{\ast} \tilde{\delta}'_S&=\BO_{\tilde{\cC}}(\tilde{\mathcal{R}}'_S) \,, & \pi''^{\ast} \tilde{\delta}''_S&=\BO_{\tilde{\cC}}(\tilde{\mathcal{R}}''_S) \,, \\
         {\tilde{\delta'}_S}^{\otimes 2}&=\BO_{\cC'}(\tilde{\Delta}'_S) \,, &
    \tilde{\delta''}_S^{\otimes 2}&=\BO_{\cC''}(\tilde{\Delta}''_S) \,.
\end{align*}
We have
\begin{align}\label{Equ: pullback ramification divisor R tilde in Egt}
\begin{aligned}
\tilde{\Delta}_S'&=p'^\ast \Delta_S'' \,, \\
     \tilde{\mathcal{R}}_S'&=\pi''^{\ast}(\mathcal{R}_S'')\,, \\
p'^\ast \delta_S &= \BO_{\cC'}(\mathcal{R}'_S)\otimes \tilde{\delta}'_S\,,
\end{aligned} &&
\begin{aligned}
     \tilde{\Delta}_S''&=p''^\ast \Delta_S' \,, \\
     \tilde{\mathcal{R}}_S''&=\pi'^\ast(\mathcal{R}_S')\,, \\
     p'^\ast \delta_S &= \BO_{\cC''}(\mathcal{R}''_S)\otimes \tilde{\delta}''_S\,.
\end{aligned}
\end{align}
The equality is easily seen to hold away from the singular fiber, and thus holds everywhere by a deformation argument. All the identities above specialize to the corresponding identities on the singular fiber. For each of the notations above, to denote the restriction to the fiber above $0\in S$ we omit the subscript $S$.

Notice $\Delta$ has double points at all singular points of type (2) of $p$, but the points of $\Delta'$ and $\Delta''$ correspond to regular ramification points (i.e. ramification points on the smooth locus) of $p'$ and $p''$, thus $\Delta'$ and $\Delta''$ must be reduced. Thus $\Delta'$ and $\Delta''$ must contain each of the singular points of type (2) of $p$ with multiplicity one. In particular, if $t=0$ then $\Delta'=0$. We have shown: 
\begin{proposition}\label{Prop: p is of type (1) when t=0}
If $t=0$ then $p$, $\pi'$ and $\pi''$ are of type (1).
\end{proposition}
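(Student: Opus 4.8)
The plan is to use Proposition \ref{Prop: pi of type (3) and p of type (1,2) etc} to reduce everything to the absence of singular points of type $(2)$, and then to exploit the vanishing of $\deg \Delta'=2t$. Since $\pi$ is of type $(3)$, Proposition \ref{Prop: pi of type (3) and p of type (1,2) etc} already tells us that $p$, $\pi'$ and $\pi''$ are of type $(1,2)$, so for each of them it suffices to rule out singular points of type $(2)$. First I would treat $p$. As $p'$ and $p''$ are of type $(3)$, their ramification divisors $\mathcal R'$, $\mathcal R''$ are effective and supported on the relative smooth loci (the type-$(3)$ nodes contribute nothing to $R$ by Proposition \ref{Prop: definition ramification divisor}), consisting of simple ramification points; hence the branch divisors $\Delta'=p'_\ast\mathcal R'$ and $\Delta''=p''_\ast\mathcal R''$ are reduced. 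On the other hand, by Proposition \ref{Prop: definition ramification divisor} a singular point $q$ of type $(2)$ for $p$, lying over $r=p(q)$, contributes the term $p^\ast([r])+(-1)_q$ to the ramification divisor $\mathcal R$, so $\Delta=p_\ast\mathcal R=\Delta'+\Delta''$ has multiplicity $2$ at $r$; since $\Delta'$ and $\Delta''$ are reduced this forces $r$ to lie in the support of $\Delta'$, with multiplicity one in each summand. But $\deg\Delta'=2t=0$ when $t=0$ (Riemann--Hurwitz for $p'$, cf.\ Proposition \ref{Prop: definition ramification divisor}), so $\Delta'=0$ and $p$ has no singular point of type $(2)$: thus $p$ is of type $(1)$.

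It then remains to handle $\pi'$ and $\pi''$, and here I would argue by contradiction using the previous step. Because $\pi$ is of type $(\ast)$, every node $\tilde q$ of $\tilde C$ is fixed by $\sigma$ and its image $q=\pi_0(\tilde q)$ is a node of $C$; moreover $\sigma'$ and $\sigma''=\sigma\sigma'$ both descend along $\pi_0$ to one and the same involution $\tau$ of $C=\tilde C/\sigma$, which is precisely the involution attached to $p$. If $\pi'$ had a singular point of type $(2)$ at some node $\tilde q$, then $\sigma'$ would fix $\tilde q$; combined with $\sigma(\tilde q)=\tilde q$ this gives $\tau(q)=\pi_0(\sigma'(\tilde q))=q$, so $\tau$ fixes the node $q$ of $C$ and $p$ is of type $(2)$ or $(3)$ at $q$ --- contradicting that $p$ is of type $(1)$. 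Hence $\pi'$, and symmetrically $\pi''$, has no singular point of type $(2)$ and is of type $(1)$.

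I expect the main obstacle to be the local bookkeeping at the nodes: one must check carefully that a type-$(2)$ singular point of $p$ produces a point of multiplicity exactly $2$ in $\Delta$, i.e.\ track the Cartier-divisor classes supported at nodes through $p_\ast$, including the unit correction $(-1)_q$ of Proposition \ref{Prop: definition ramification divisor}, and one must confirm that under a type-$(3)$ cover the image of a node is genuinely a node and that $\sigma'$, $\sigma''$ induce the same involution on $C$. Everything else is routine bookkeeping with the identities collected in (\ref{Equ: identities of delta and relative omega_C}) and (\ref{Equ: pullback ramification divisor R tilde in Egt}).
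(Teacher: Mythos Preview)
Your argument for $p$ is exactly the paper's: a type-$(2)$ node of $p$ produces a double point of $\Delta$, which must split as one simple point in each of the reduced divisors $\Delta'$ and $\Delta''$, contradicting $\Delta'=0$ when $t=0$.

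For $\pi'$ and $\pi''$ the paper actually gives no explicit argument --- the sentence ``We have shown:'' follows only the paragraph treating $p$ --- so you are filling in a gap. Your contradiction via the descent of $\sigma'$ to $\tau$ on $C$ is correct and clean. An alternative more in the spirit of the paper's bookkeeping is to use the identities (\ref{Equ: pullback ramification divisor R tilde in Egt}) directly: from $\tilde{\Delta}''=p''^\ast\Delta'=0$ one sees that $\pi''$ has empty branch divisor and hence no type-$(2)$ points; and $\tilde{\Delta}'=p'^\ast\Delta''$ is reduced because $\Delta''$ is reduced on the smooth locus of $\eE$ and $p'$ is \'etale there once $\Delta'=0$, which again excludes type-$(2)$ points for $\pi'$. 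Both routes work; yours makes the group-theoretic reason transparent, while the branch-divisor route stays entirely within the divisor calculus already set up in Section~\ref{Sec: Preliminaries boundary of EEgt}.
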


We define the relative Prym varieties
\begin{align*} 
\mathcal{P}&=\Ker^0(\Nm_\pi : J\tilde{\cC}\to J\cC ) )\,, \\
\mathcal{P}'&= \Ker^0(\Nm': J \cC'\to J\eE  )\,, \\
\mathcal{P}''&=\Ker^0(\Nm'': J\cC''\to J\eE) )\,. 
\end{align*}
By \cite[Section 6]{Beauville1977}, the abelian scheme $\pP\to S$ is canonically endowed with a principal polarization $L_\pP$ (which is one half the induced polarization from $J\tilde{\cC}$). We endow $\pP'$ (resp. $\pP''$) with the induced polarization $L_{\pP'}$ (resp. $L_{\pP''})$) from $J\cC'$ (resp. $J\cC''$). By Proposition \ref{Prop: pullback of L_P to P'xP''} we know that $L_{\pP'}$ and $L_{\pP''}$ are of type $(1,\dots,1,2)$ and the pullback induces an isogeny of polarized abelian schemes of degree $4$ (resp. $2$) for $t\geq 1$ (resp. $t=0$):
\[ g\coloneqq (\pi'^\ast+\pi''^\ast):\pP'\times \pP'' \to \pP \,, \quad g^\ast L_\pP=L_{\pP'}\boxtimes L_{\pP''} \,. \]
Thus by specialization we have the following:
\begin{proposition}\label{Prop: covering g:P' times P'' to P in the singular case}
Let $(P,\Xi)\in \EE_{g,t}$. If $t\geq 1$ (resp. $t=0$), there is a degree $4$ (resp. $2$) isogeny of polarized abelian varieties
\[ g\coloneqq (\pi'^\ast + \pi''^\ast)\restr{P'\times P''} : P'\times P'' \to P \,, \qquad g^\ast L_P=L_{P'}\times L_{P''} \,, \]
where for $i\in \{',''\}$, $P^i=\Ker(\Nm_{p^i}:JC^i\to JE)$ and $L_{P^i}=L_{JC^i}\restr{P^i}$ is the restriction of the usual polarization on $JC^i$.
\end{proposition}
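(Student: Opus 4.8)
\emph{Proof idea.} The plan is to deduce the statement from the relative picture over $S$ constructed above, simply by restricting everything to the fibre over $0\in S$. First I would record that $\pP'\times_S\pP''$ and $\pP$ are abelian schemes over the smooth curve $S$, that $g=(\pi'^\ast+\pi''^\ast)\colon\pP'\times_S\pP''\to\pP$ is a homomorphism of abelian $S$-schemes, and that over $S^\circ\coloneqq S\setminus\{0\}$ Proposition~\ref{Prop: pullback of L_P to P'xP''}, applied on each fibre, tells us that $g|_{S^\circ}$ is an isogeny of degree $4$ (resp.\ $2$ if $t=0$) and that $g^\ast L_\pP$ and $L_{\pP'}\boxtimes L_{\pP''}$ restrict to the same line bundle on every fibre over $S^\circ$. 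By construction the fibre of $g$ over $0$ is exactly $g_0=(\pi_0^{\prime\ast}+\pi_0^{\prime\prime\ast})\colon P'\times P''\to P$, and its image lies in $P$ rather than merely in $\Ker(\Nm_{\pi_0})$ because $P'\times P''$ is connected and $\Nm_{\pi_0}\circ g_0=p_0^\ast\circ(\Nm_{p_0'}\oplus\Nm_{p_0''})$ vanishes on it (the identity $\pi^j_\ast\pi^{i\ast}=p^{j\ast}p^i_\ast$ carries over to these nodal curves).

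Next I would propagate the polarisation identity to the special fibre. Set $M\coloneqq g^\ast L_\pP\otimes(L_{\pP'}\boxtimes L_{\pP''})^{-1}$, a line bundle on $\pP'\times_S\pP''$. Over $S^\circ$ it is fibrewise trivial, hence of the form $q^\ast N^\circ$ for a line bundle $N^\circ$ on $S^\circ$, where $q$ is the structure map to $S$. Extending $N^\circ$ to $S$ and using that the fibre over $0$ is an irreducible Cartier divisor linearly equivalent to $q^\ast[0]$, one gets $M=q^\ast N$ for some line bundle $N$ on $S$; restricting to the fibre over $0$ trivialises $M$, so $g_0^\ast L_P=L_{P'}\boxtimes L_{P''}$. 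Since $L_{P'}$ and $L_{P''}$ are polarisations, this pullback is ample, so $g_0$ contracts no curve and is thus quasi-finite; being also proper, $g_0$ is finite, hence---as a finite homomorphism between abelian varieties of the same dimension---an isogeny. For the degree I would either note that $g$ is now finite over all of $S$, hence finite flat over the regular curve $S$, so $\deg\Ker(g)$ is locally constant and equals its generic value $4$ (resp.\ $2$); or compute directly $\deg g_0=\chi(g_0^\ast L_P)/\chi(L_P)=\chi(L_{P'})\,\chi(L_{P''})$, which is $2\cdot 2=4$ for $t\ge1$ and $1\cdot 2=2$ for $t=0$.

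I expect the only genuinely non-formal point to be the ingredient placed at the very start: that the relative Pryms $\pP,\pP',\pP''$ are honest \emph{abelian} schemes over the whole of $S$, i.e.\ that the Prym of the degenerate cover $\pi_0$ of type $(\ast)$ is an abelian variety and not merely semi-abelian. This is exactly what Beauville's construction for $(\ast)$-covers \cite[Sec.~6]{Beauville1977}, combined with the type analysis of Proposition~\ref{Prop: pi of type (3) and p of type (1,2) etc}, supplies; once it is in hand, the remainder is the routine specialisation sketched above, and the resulting $g_0$ is manifestly $\pi_0^{\prime\ast}+\pi_0^{\prime\prime\ast}$.
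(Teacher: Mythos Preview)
Your proposal is correct and follows essentially the same route as the paper: the paper's entire proof is the phrase ``Thus by specialization we have the following,'' relying on the relative isogeny $g\colon \pP'\times_S\pP''\to\pP$ over $S$ already set up in the preceding paragraphs and Proposition~\ref{Prop: pullback of L_P to P'xP''} on the generic fibre. You have simply unpacked what ``specialization'' entails (propagating the polarisation identity via a seesaw-type argument, and constancy of the degree via flatness or an Euler-characteristic count), and correctly flagged that the only substantive input is the abelianness of the relative Pryms over $0$.
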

We say that a curve $E$ is a cycle of $\PP^1$'s if $E$ is a nodal curve, its normalization is a union of rational curves and its dual graph is cyclic. We have the following:
\begin{lemma}\label{Lemma: E cannot have tails}
Let $(P,\Xi)\in \BE_g\setminus (\mathcal{J}_g\cup \mathcal{A}_g^\dec)$. With the notations introduced above, $\pi$ is of type $(\ast)$ and $E$ is either smooth and irreducible, or $E$ is a cycle of $\PP^1$'s. \\
Moreover, if $E=E_1\cup \cdots E_n$ and $(d_1,\dots,d_n)=\underline{\deg}(\delta)$, then $d_i>0$ for $1\leq i \leq n$.
\end{lemma}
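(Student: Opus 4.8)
The plan is to analyze the degeneration family $\tilde{\cC}\to \cC\to \eE\to S$ introduced just above, using the classification of involutions at nodes (types (1),(2),(3)) and the structure of the tower. First I would argue that $\pi=\pi_0$ must be of type $(\ast)$: if $\pi$ were of type (1) at some node, then $\pi$ is \'etale there and $C$ is singular; pushing the normalization data through the tower would either make $C$ (hence $\tilde C$) reducible or force $\tilde C$ itself to be of compact type, and in the first case $(P,\Xi)$ becomes decomposable while in the second one gets $(P,\Xi)\in\mathcal{J}_g$ (a product of Jacobians of the components, which lies in $\mathcal{J}_g\cup\mathcal{A}_g^{\dec}$). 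If $\pi$ has a node of type (2), then by Proposition \ref{Prop: pi of type (3) and p of type (1,2) etc} and the eigen-sheaf description in Proposition \ref{Prop: definition of delta} the relevant line bundle still exists, but a type-(2) node of $\pi$ means $C$ is \emph{smooth} at the image point, so this contributes nothing new to the geometry of $C$; iterating, if every node of $\pi$ were of type (1) or (2) then either $\tilde C$ is smooth (so $(P,\Xi)\in\EE_{g,t}^\ast$, not a genuine boundary point) or $C$ has a node coming only from a type-(1) node, which reduces to the decomposable/Jacobian case as above. Hence on the boundary, outside $\mathcal{J}_g\cup\mathcal{A}_g^{\dec}$, all nodes of $\pi$ are of type (3) and $\pi$ is \'etale elsewhere, i.e. $\pi$ is of type $(\ast)$.

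Next I would pin down the shape of $E$. By Proposition \ref{Prop: pi of type (3) and p of type (1,2) etc}, $p:C\to E$ is then of type $(1,2)$, so it is flat with associated line bundle $\delta$ and $\delta^{\otimes 2}=\BO_E(\Delta)$ with $\Delta=p_\ast\mathcal{R}$ reduced away from $\Sing(E)$. Now $E=\tilde\cC_0/\langle\sigma,\sigma'\rangle$ is the special fiber of a flat family $\eE\to S$ of genus-$1$ stable curves; the possible stable curves of arithmetic genus $1$ are a smooth elliptic curve, or a cycle of $\PP^1$'s (including the nodal cubic as the $n=1$ case). I must rule out genus-$1$ stable curves with rational tails, i.e. trees of $\PP^1$'s attached to a genus-$1$ core. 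Suppose $E$ had a tail $T$, a tree of rational curves meeting the rest $E_0$ of $E$ in a single point $q$. Pull the whole tower back over $T$: the components of $\cC,\cC',\cC''$ lying over $T$ form a subcurve meeting the rest of $\tilde\cC$ only over $q$; since $p^i$ are \'etale or type-(1,2) covers, a rational tail of $E$ pulls back to a subcurve of arithmetic genus $0$ (a tree of rational curves, using Hurwitz/Riemann–Hurwitz for the dualizing sheaf, Proposition \ref{Prop: definition ramification divisor}, and that $\delta$ restricted to a tail has degree forced by $\delta^{\otimes 2}=\BO(\Delta)$). A genus-$0$ tail of $\tilde C$ attached at a single node would make $\tilde C$ (hence $C$) of non-minimal type, contradicting that $(P,\Xi)$ is an indecomposable ppav of dimension $g$ not in $\mathcal{J}_g$: either $P$ splits off a factor or the tail carries no abelian part and can be contracted, reducing $g$. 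So $E$ has no tails, hence $E$ is smooth irreducible or a cycle of $\PP^1$'s.

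For the last assertion, write $E=E_1\cup\dots\cup E_n$ a cycle and $\underline{\deg}(\delta)=(d_1,\dots,d_n)$; I must show $d_i>0$ for every $i$. Since $\delta^{\otimes 2}=\BO_E(\Delta)$ and $\Delta$ is effective and reduced on $E_{\sm}$, each $d_i\ge 0$. If $d_i=0$ for some $i$, then $\delta\restr{E_i}\cong\BO_{E_i}$ and $\Delta\cap E_i=\emptyset$, so $p$ is \'etale over the open part of $E_i$; analyzing $p$ over $E_i$ — a $\PP^1$ with two marked nodes — an \'etale double cover of $\PP^1\setminus\{2\text{ points}\}$ that extends to a type-$(1,2)$ cover of the nodal curve forces the two components of $C$ over $E_i$ either to be swapped by $\tau$ or to be disconnected over $E_i$; tracking this through the whole cyclic structure of $E$ and the tower, one finds that $\tilde C$ acquires a component or a disconnection that again lands $(P,\Xi)$ in $\mathcal{A}_g^{\dec}$ or drops the dimension of $P$ below $g$, a contradiction. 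I expect \emph{this last step — ruling out $d_i=0$ by a careful local-to-global bookkeeping of the type-(1,2) cover over a chain component of $E$ — to be the main obstacle}, since it requires combining the explicit node-local normal forms (the Remark's Type 1/2/3 formulas for $\pi^\ast$ and $\sigma$) with a global connectedness/dimension count for the Prym; the other parts are essentially forced once $\pi$ is shown to be of type $(\ast)$.
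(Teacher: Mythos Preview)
Your argument has genuine gaps in all three parts.

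\textbf{Type $(\ast)$.} Your case analysis does not work. A type-(1) node of $\pi$ makes $C$ singular at the image, but nothing forces $C$ to be reducible or $\tilde C$ to be of compact type; likewise, type-(2) nodes of $\tilde C$ map to smooth points of $C$ but $\tilde C$ is still singular, so your dichotomy ``either $\tilde C$ is smooth or $C$ has a node from a type-(1) node'' is false. The actual content here is Beauville's theorem \cite[Th.~5.4]{Beauville1977}: if the Prym of an admissible double cover is neither a Jacobian nor decomposable, then $\pi$ is of type $(\ast)$. The paper simply invokes this; there is no elementary substitute.

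\textbf{No tails.} Your central claim, that a rational tail $T\subset E$ pulls back to a subcurve of arithmetic genus $0$ in $C^i$, is unjustified and in general false: a double cover of $\PP^1$ branched over points of $\Delta^i\cap T$ can have arbitrary genus. (You also write that $p',p''$ are of type $(1,2)$; by Proposition~\ref{Prop: pi of type (3) and p of type (1,2) etc} they are of type $(3)$ --- it is $p,\pi',\pi''$ that are of type $(1,2)$.) The paper proceeds differently: writing $E=E_1\cup E_2$ with $E_2\simeq\PP^1$ a tail, one uses the isogeny $g=(\pi'^\ast+\pi''^\ast):P'\times P''\to P$ of Proposition~\ref{Prop: covering g:P' times P'' to P in the singular case} together with the splitting $P^i=P^i_1\times P^i_2$ and $P^i_2\simeq JC^i_2$ (since $JE_2=0$) to see that $JC'_2\times JC''_2$ is a \emph{principally polarized} factor of $P$. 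Indecomposability then forces $g(C'_2)=g(C''_2)=0$, and only after this does one compute $\deg\Delta'\!\restr{E_2}=\deg\Delta''\!\restr{E_2}=1$, whence $C_2$ and $\tilde C_2$ are rational meeting the rest in exactly two points, contradicting stability of $\tilde C$. The indecomposability hypothesis is what makes the genus-$0$ claim true; you cannot get it from Hurwitz alone.

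\textbf{$d_i>0$.} Here too the contradiction is with \emph{stability of $\tilde C$}, not with indecomposability of $P$: if $d_i=0$ then $p^{-1}(E_i)$ is two disjoint $\PP^1$'s, each attached to the rest of $C$ at two points, and since $\pi$ is of type $(3)$ the preimage in $\tilde C$ is again a rational component attached at two points. Your proposed route through ``disconnection'' and dimension drop of $P$ is unnecessary and, as stated, not a proof.
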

\begin{proof}
By \cite[Th. 5.4]{Beauville1977}, under the above assumptions, $\pi$ is of type $(\ast)$. We will show that $E$ does not have tails, i.e. we cannot write $E$ as a union
\[ E=E_1\cup E_2 \,, \]
where $E_1$ and $E_2$ meet in a single point. This will prove the first part of the Lemma as $E$ is nodal, irreducible, and of arithmetic genus $1$. Assume that $E=E_1\cup E_2$ with $E_1$ and $E_2$ meeting at a single point. As the arithmetic genus of $E$ is $1$, we can assume that $E_2$ is irreducible and of genus $0$. For $i\in\{',''\}$ and $j\in \{1,2\}$ let $C^i_j=p^{i,-1}(E_j)$ and
\[ P^i_j=\Ker^0(\Nm_{p^i}:JC^i_j\to JE_j)\,, \quad L_{P^i_j}= L_{JC^i_j}\restr{P^i_j}\,. \]
Then $P^i=P^i_1\times P^i_2$ and $L_{P^i}=L_{P^i_1}\boxtimes L_{P^i_2}$. Moreover, since $E_2$ is of genus $0$, we have
\[ JE_2=\{0\}\,, \quad \text{thus} \quad P^i_2\simeq JC^i_2 \,. \]
Thus by \ref{Prop: covering g:P' times P'' to P in the singular case}, there is an isogeny of polarized abelian varieties
\[ g: P'_1\times JC'_2  \times P''_1\times JC''_2 \to P \,, \]
where the polarization on the left-hand side is $L_{P'_1}\boxtimes L_{JC'_2}\boxtimes L_{P''_1}\boxtimes L_{JC''_2}$. But since $L_{JC'_2}$ and $L_{JC''_2}$ are principal, the kernel of $g$ has to be in $P'_1\times\{0\}\times P''_1\times\{0\}$. Thus $P$ has $JC'_2\times JC''_2$ as a factor. Thus by assumption both have to be trivial, i.e. $C'_2$ and $C''_2$ are of genus $0$. By \ref{Prop: pi of type (3) and p of type (1,2) etc} $p'$ and $p''$ are of type (3), thus for $i\in\{',''\}$, $C^i_2\to E_2$ ramifies at the point $C^i_2\cap C^i_1$ and another point. Thus $\deg \Delta'\restr{E_2}=\deg \Delta''\restr{E_2}= 1$. Thus $\deg \Delta\restr{E_2}=2$ and thus since $p$ is of type (1,2), $C_2=p^{-1}(E_2)$ is of genus $0$, and $\# (C_1\cap C_2)=2$. Since $\pi$ is of type (3), this implies $\tilde{C}_2=\pi^{-1}(C_2)$ is of genus $0$ and $\#(\tilde{C}_1\cap \tilde{C}_2)=2$, thus contradicting the stability of $\tilde{C}$. \\
The last assertion follows similarly, since if $d_i=0$, then $p^{-1}(E_i)$ is a disjoint union of two $\PP^1$'s, each attached to $C$ at two points. But then $\tilde{C}$ contains a rational component attached at only two points, contradicting its stability.
\end{proof}

We will now focus on the case where $\pi$ is of type $(\ast)$ and $E$ is a cycle of $\PP^1$'s. By previous lemma $\omega_E \simeq \BO_E $ is trivial, thus by \ref{Prop: definition ramification divisor}, we have
\[
    \omega_{C'}=\BO_{C'}(R')\,, \quad \text{and} \quad \omega_{C''}=\BO_{C''}(R'') \,.\]
Moreover, by \ref{Equ: pullback ramification divisor R tilde in Egt} we have
    \[ \pi'^\ast(\omega_{C'})\otimes \pi''^\ast(\omega_{C''})=\omega_{\tilde{C}}\,. \]
Since $p'$ and $p''$ are of type (3), the dual graphs of $E$, $C'$, and $C''$ are canonically identified, and we will from now on identify the multidegrees of line bundles and Cartier divisors. Moreover, since the graph of $C^i$ is cyclic, the multidegrees $\underline{\deg}(\omega_{C'})$ and $ \underline{\deg} (\omega_{C''})$ are even, and we define 
\begin{align*}
\underline{d}'&\coloneqq \frac{1}{2}\underline{\deg}(\omega_{C'})=\frac{1}{2}\underline{\deg}(R')\,, \quad \text{and}\\
\underline{d}''&\coloneqq \frac{1}{2}\underline{\deg}(\omega_{C''})=\frac{1}{2}\underline{\deg}(R'')\,.
\end{align*}
In particular, we have
\begin{align*}
    \underline{\deg}(\delta)&=\frac{1}{2}\underline{\deg}(\Delta) \\
    &= \frac{1}{2}(\underline{\deg}(\Delta')+\underline{\deg}(\Delta''))\\
    &= \underline{d}'+\underline{d}''\\
    &\eqqcolon \underline{d}\,.
\end{align*}
Just as in the smooth case we have the following:
\begin{proposition}\label{Prop: g is of degree 2, singular case}
Let 
\[ \tilde{\Xi}\coloneqq (\Theta'\times \Theta'')\cap \Nm^{-1}(\delta)\subset \Pic^{\ud'}(C')\times \Pic^{\ud''}(C'')\,. \]
The pullback map induces a generically finite morphism of degree $2$
\[
    (\pi'^\ast+\pi''^\ast)\restr{\Tilde{\Xi}}: \tilde{\Xi} \to \Xi\,,
    \]
corresponding on the finite locus to the quotient by the involution $\iota\restr{\tilde{\Xi}}$ where
\begin{align*} 
\iota: \Pic^{\ud'}(C')\times \Pic^{\ud''}(C'') &\to \Pic^{\ud'}(C')\times \Pic^{\ud''}(C'') \\
(L',L'')&\mapsto (\omega_{C'}-\tau'L',\omega_{C''}-\tau''L'') \,. 
\end{align*}
If $t=0$, the morphism $(\pi''^\ast)\restr{\tilde{\Xi}}$ is étale of degree $2$.
\end{proposition}
\begin{proof}
    The proof of Proposition \ref{Prop: g is of degree 2} follows ad verbatim, using Proposition \ref{Prop: Pullback line bundle sequence singular case}.
\end{proof}
It turns out that it is enough to study degeneration's in ${\EE}_{g,0}$, as they cover all the cases by the following lemma:
\begin{lemma}\label{Lemma: Singular Pryms all come from EE g,0}
Suppose $(P_1,\Xi_1)\in {\EE}_{g,t}$ corresponds to the datum of a cycle of $n$ $\PP^1$'s $E_1$, a line bundle $\delta_1$, degrees $\underline{d}'=(d'_1,\dots,d'_k,0,\dots,0)$, $\underline{d}''=(d''_1,\dots,d''_l,0,\dots,0)$ and ramification divisors $(\Delta'_1,\Delta''_1)$. Then $(P_1,\Xi_1)$ is isomorphic to the Prym $(P_2,\Xi_2)\in {\EE}_{g,0}$ corresponding to the datum $(E_2,\delta_2,\Delta_2)$, where 
\begin{itemize}
    \item $E_2$ is a cycle of $k+l$ $\PP^1$'s,
    \item $\underline{d}_2\coloneqq \underline{\deg} \,\delta_2=(d'_1,\dots,d'_k,d''_1,\dots,d''_l)$\,,
    \item $\Delta_2$ is equal to $\Delta'_1$ when restricted to the first $k$ $\PP^1$'s and equal to $\Delta''_1$ on the remaining ones\,,
    \item under the identifications $\Pic^d(E_1)\simeq \CC^\ast \simeq \Pic^d(E_2)$, we have
    \[\delta_1=\delta_2 \,.\]
\end{itemize}
\end{lemma}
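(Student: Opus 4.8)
The plan is to make both ppav's explicit as products of Jacobians of the positive-genus components of the relevant type-$(3)$ covers, and then to identify these products together with their polarizations and theta divisors; throughout one uses that a cycle of $\PP^1$'s has generalized Jacobian $\simeq\CC^\ast$, with trivial abelian part. First I would describe $(P_1,\Xi_1)$. Since $E_1$ is a cycle of $\PP^1$'s, Proposition~\ref{Prop: pi of type (3) and p of type (1,2) etc} shows $p'$ and $p''$ are of type~$(3)$, so $C'_1$ and $C''_1$ are again cycles of curves whose $i$-th components $C'_{1,i}$ and $C''_{1,i}$ are double covers of $\PP^1$ branched at the $2d'_i$ points of $\Delta'_1$, resp. the $2d''_i$ points of $\Delta''_1$, lying on that $\PP^1$, together with the two node-markings, hence of genus $d'_i$, resp. $d''_i$. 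Because $p'$, $p''$ are homeomorphisms on dual graphs, the norm maps are isogenies on toric parts, so a dimension count identifies, as abelian varieties,
\[ P'_1=\prod_{d'_i>0}JC'_{1,i}\,,\qquad P''_1=\prod_{d''_i>0}JC''_{1,i}\,. \]
By Propositions~\ref{Prop: covering g:P' times P'' to P in the singular case} and~\ref{Prop: g is of degree 2, singular case}, $(P_1,\Xi_1)$ is then recovered from $(P'_1,L_{P'_1})$, $(P''_1,L_{P''_1})$ by forming the degree-$2$ quotient along the involution $\iota$ of the divisor $\tilde\Xi_1=(\Theta'_1\times\Theta''_1)\cap\Nm^{-1}(\delta_1)$ under the isogeny $g=\pi'^\ast+\pi''^\ast$.

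For $(P_2,\Xi_2)\in\EE_{g,0}$, write $C''_{(2)}$ for the relevant intermediate curve of the $(\ZZ/2\ZZ)^2$-tower of the admissible cover $\tilde C_2\to C_2$. Since $t=0$ we have $\Delta'_2=0$, so (again by Proposition~\ref{Prop: pi of type (3) and p of type (1,2) etc}) $p'':C''_{(2)}\to E_2$ is of type~$(3)$ and branched along all of $\Delta_2$, and Proposition~\ref{Prop: pullback of L_P to P'xP''} (case $t=0$) gives an étale $2:1$ isogeny $\pi''^\ast:P''_{(2),\delta_2}\to P_2$ with $\pi''^\ast\Xi_2=\Theta''_{(2)}\cap P''_{(2),\delta_2}$ scheme-theoretically. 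As above $P''_{(2)}=\prod_m JC''_{(2),m}$, the $m$-th component $C''_{(2),m}\to\PP^1$ being branched at the points of $\Delta_2$ on the $m$-th $\PP^1$ together with the two node-markings.

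Now comes the comparison. By the very definition of $(E_2,\delta_2,\Delta_2)$, the branch configuration of $C''_{(2),m}\to\PP^1$ coincides --- up to $\mathrm{PGL}_2$, i.e. as an abstract double cover with its two marked points --- with that of $C'_{1,m}\to\PP^1$ for $m\le k$ and with that of $C''_{1,m-k}\to\PP^1$ for $k<m\le k+l$; hence each $JC''_{(2),m}$ is isomorphic, as a principally polarized abelian variety, to the corresponding component Jacobian on the $(P_1,\Xi_1)$ side, and therefore $P''_{(2)}\cong P'_1\times P''_1$. One then checks that under this isomorphism the polarizations $L_{P''_{(2)}}$ and $L_{P'_1}\boxtimes L_{P''_1}$, and the divisors $\Theta''_{(2)}$ and $\Theta'_1\times\Theta''_1$, agree (both being restrictions of the theta divisor of the compactified Jacobian of the same collection of curves), that the identification $\Pic^d(E_1)\simeq\CC^\ast\simeq\Pic^d(E_2)$ sending $\delta_1$ to $\delta_2$ carries $\Nm^{-1}(\delta_1)$ to $\Nm^{-1}(\delta_2)$ --- hence $\tilde\Xi_1$ to $\Theta''_{(2)}\cap P''_{(2),\delta_2}$ --- and that the kernels of $g$ and of $\pi''^\ast$ (both governed by the $2$-torsion of $JE\simeq\CC^\ast$ and by $p^\ast\delta'$) correspond. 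Chasing the two degree-$2$ quotients of Propositions~\ref{Prop: g is of degree 2, singular case} and~\ref{Prop: pullback of L_P to P'xP''} then yields $(P_1,\Xi_1)\cong(P_2,\Xi_2)$.

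The isomorphism of underlying abelian varieties is immediate once the product descriptions are in hand; the main obstacle is the last step, namely transporting the type $(1,\dots,1,2)$ polarizations and the theta divisors correctly. In particular one must verify that regrouping a \emph{mixed} component of $E_1$ (one carrying both $\Delta'_1$- and $\Delta''_1$-points) into two separate components of $E_2$ is compatible with the way the theta divisor of the compactified Jacobian of a cycle of curves restricts to its abelian part, and that the lone remaining gluing parameter in $\CC^\ast$ --- all that survives after Lemma~\ref{Lemma: E cannot have tails} --- is carried along faithfully under all of these identifications.
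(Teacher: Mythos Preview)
Your overall strategy---reduce both sides to the Jacobians of the normalization components and match them---is the same as the paper's, but there is a genuine gap in the bookkeeping of $2$-torsion that makes the argument as written fail.

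Concretely: for $t\ge 1$, Proposition~\ref{Prop: covering g:P' times P'' to P in the singular case} gives a degree-$4$ isogeny $g_1:P'_1\times P''_1\to P_1$, whereas on the other side the isogeny $\pi''^\ast:P''_{(2)}\to P_2$ has degree $2$. If your identification $P''_{(2)}\cong P'_1\times P''_1$ were correct, the two kernels could not possibly ``correspond'', so the last step cannot go through. The source of the discrepancy is your claim that a dimension count gives $P'_1=\prod_{d'_i>0} JC'_{1,i}=JN'_1$. A dimension count shows nothing more than equality of dimensions; in fact each $P^i_1$ sits in an exact sequence
\[
0\to \ZZ/2\ZZ \to P^i_1 \to JN^i_1 \to 0
\]
coming from the squaring map on the toric parts $\CC^\ast\to\CC^\ast$, so $P'_1\times P''_1$ is a $(\ZZ/2\ZZ)^2$-extension of $JN'_1\times JN''_1$, while $P''_2$ is only a $\ZZ/2\ZZ$-extension of $JN''_2=JN'_1\times JN''_1$. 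These are \emph{not} isomorphic.

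The paper fills exactly this gap. It identifies the extension classes explicitly via \cite[Cor.~12.5]{OdaSeshadri1979GenJac}: the class of $JC'_1$ over $JN'_1$ is the line bundle $\eta'_1\restr{JN'_1}=\sum_i\BO_{N'_{1,i}}(P^0_i-P^\infty_i)$, and similarly for the other curves. It then writes down the kernel of $g_1$ as $\langle \nu_1,\eta_1\rangle$, where $\nu_1$ is the anti-diagonal image of $J_2E_1\simeq\ZZ/2\ZZ$ and $\eta_1=(p'^\ast_1\delta'_1(-R'_1),\,p''^\ast_1\delta''_1(-R''_1))$. The key computation is that quotienting by $\nu_1$ collapses the $(\ZZ/2\ZZ)^2$-extension to the $\ZZ/2\ZZ$-extension, giving $(P'_1\times P''_1)/\nu_1\cong P''_2$, and that under this isomorphism $\eta_1$ is sent to $\eta_2=p''^\ast_2\delta_2(-R''_2)$, the generator of $\ker(\pi''^\ast)$. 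Hence
\[
P_1=(P'_1\times P''_1)/\langle \nu_1,\eta_1\rangle = P''_2/\eta_2 = P_2,
\]
and the polarizations agree because both are induced from $JN'_1\times JN''_1=JN''_2$. This explicit tracking of the two $2$-torsion elements is precisely what is missing from your sketch, and it is not a detail that can be hand-waved: without it the degree-$4$/degree-$2$ mismatch is a genuine obstruction.
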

\begin{proof}
We will use the notations introduced at the beginning of this section indexed by $1$ or $2$ depending on whether we are referring to $(P_1,\Xi_1)$ or $(P_2,\Xi_2)$. Suppose $\underline{d}'=(d'_1,\dots,d'_n)$, $\underline{d}''=(d''_1,\dots,d''_n)$. Let $N'_1$ and $N''_1$ be the normalization of $C'_1$ and $C''_1$ respectively. For $i\in \{',''\}$ we have the following exact diagrams
\begin{center}
    \begin{tikzcd}
        {} & 0\arrow[d] &0 \arrow[d] &0 \arrow[d]& \\
        0 \arrow[r] & \ZZ/2\ZZ \arrow[r] \arrow[d] & P^i_1 \arrow[r] \arrow[d] & JN^i_1 \arrow[r] \arrow[d] & 0 \\
        0 \arrow[r] & \CC^\ast \arrow[r] \arrow[d,"\times 2"] & JC^i_1 \arrow[r] \arrow[d,"\Nm^i"]& JN^i_1 \arrow[r] \arrow[d] & 0 \\
        0  \arrow[r] & \CC^\ast \arrow[r] \arrow[d] & JE_1 \arrow[r] \arrow[d] & 0 & \\
        & 0 & 0 & & \,. 
    \end{tikzcd}
\end{center}
We thus have the exact sequence
\[ 0 \to (\ZZ/2\ZZ)^2 \to P'_1\times P''_1 \to JN'_1\times JN''_1 \to 0 \,. \]
We know by \ref{Prop: covering g:P' times P'' to P in the singular case} that the pullback $g_1:P'_1\times P''_1\to P_1$ is a degree $4$ isogeny. Clearly $\ZZ/2\ZZ = J_2E_1$ embedded anti-diagonally in $(\ZZ/2\ZZ)^2$ is in the kernel of $g_1$. Let $\nu_1\in P'_1\times P''_1$ be the non-zero element in $J_2E_1 \hookrightarrow \Ker( g_1)$. Let $\delta'_1$ be a square root of $\BO_{E_1}(\Delta'_1)$ in $\Pic^{\underline{d}'}(E_1)$. Let $\delta''_1\coloneqq \delta_1- \delta'_1$, and 
\[ \eta_1=(\eta'_1,\eta''_1) \coloneqq \left(p_1'^\ast \delta'_1(-R_1'),p_1''^\ast\delta''_1(-R_1'')\right)\in \Pic^{0}(C'_1)\times \Pic^{0}(C''_1)\,. \]
We claim that $\langle \eta_1,\nu_1 \rangle=\Ker(g_1)$. We have
 \[g_1^\ast \eta_1 =f_1^\ast \delta_1 -\pi_1^\ast \BO_{C_1}(R_1) =0\,. \]
Clearly $\eta_1 \in P'_1\times P''_1$. Finally, if $N'_{1,1},\dots,N'_{1,n}$ are the irreducible components of $N'_1$, and ${Q}_i^{0},{Q}_i^{\infty}  \in N_{1,i}$ are the points sitting above the nodes in $C'_1$, then
    \[ \eta_1'\restr{N'_{1,i}} =p_1'^{\ast}\restr{N_i} (\BO_{\PP^1}(d'_i))-\BO_{N'_{1,i}}(R'_{1,i})=\BO_{N'_{1,i}}(Q^{0}_i-Q^\infty_i) \,, \]
and similarly for $\eta''_1$. Thus $\eta_1$ is non-zero and distinct from the antidiagonal embedding of $J_2E_1$. By \cite[Cor. 12.5]{OdaSeshadri1979GenJac}, the extension
\[ 0 \to \CC^\ast \to JC_1' \to JN_1'\to 0 \]
corresponds to the line bundle $\eta'_1\restr{JN_1'}=\sum_i \BO_{N'_{1,i}}(P_i^0-P_i^\infty) \in \Pic^0(N'_1)\simeq \Ext(JN'_1,\CC^\ast)$, and similarly for $C''_1$. \\
We now turn to $P''_2$. Clearly in our setup, we have 
\[ JN''_2=JN'_1\times JN''_1 \,. \]
We also have an exact sequence
\[ 0 \to \ZZ/2\ZZ \to P''_2 \to JN'_1\times JN''_1 \to 0 \,. \]
The morphism $g_2:P''_2\to P_2$ is an isogeny of degree $2$ by \ref{Prop: covering g:P' times P'' to P in the singular case}, and by the same reasoning the kernel is generated by 
\[ \eta_2\coloneqq p_2''^\ast \delta_2 (- R_2'') \in JC''_2 \,. \]
Clearly $\eta_2\restr{JN''_2}=(\eta_1'\restr{JN'_1},\eta_1''\restr{JN''_1})$. Moreover, the extension
\[ 0 \to \CC^\ast \to JC''_2 \to JN''_2 \to 0 \]
corresponds to $\eta_2\restr{JN''_2}$. The extension above is thus isomorphic to the quotient of the extension
\[ 0 \to \CC^\ast\times \CC^\ast \to JC'_1\times JC''_1 \to JN'_1\times JN''_1 \to 0 \]
by $\CC^\ast \hookrightarrow \CC^\ast \times \CC^\ast $ embedded anti-diagonally (by functoriality of $\Ext$, the above extension corresponding to $(\eta'_1,\eta_1'')\in \Ext( JN'_1,\CC^\ast)\times \Ext(JN''_1,\CC^\ast)\subset \Ext(JN'_1\times JN''_1,\CC^\ast \times \CC^\ast)$). Thus
\[ (P'_1\times P''_1)/{\nu_1} \simeq P''_2 \,, \]
and 
\[ P_1=(P'_1\times P''_1)/\langle \eta_1 ,\nu_1 \rangle =P''_2/\eta_2=P_2 \,. \]
Moreover the polarization on $P'_1\times P''_1$ and $P''_2$ is induced by that of $JN'_1\times JN''_1=JN''_2$. Thus both Pryms $(P_1,\Xi_1)$ and $(P_2,\Xi_2)$ are isomorphic.
\end{proof}
Just as in the smooth case we have the following: 
\begin{proposition}	\label{Prop: Boundary: construction of the covering from E and a branch divisor}
The following two sets of data are equivalent:
\begin{enumerate}
    \item Tuples $(E,\delta,\Delta',\Delta'')$ where $E$ is a cycle of $n$ $\PP^1$'s, $\delta$ is a line bundle on $E$ of total degree $g$ and $\Delta',\Delta''$ are two effective reduced non-singular divisors on $E$ such that
\[ \delta^{\otimes 2}=\BO_E(\Delta'+\Delta'') \,. \]
 \item Galois towers of double coverings of nodal curves
\[   \begin{tikzcd}
        & \tilde{C} \arrow[dl,"\pi"'] \arrow[d,"\pi'"] \arrow[dr,"\pi''"] & \\
        C \arrow[dr,"p"'] & C'\arrow[d,"p'"] & C'' \arrow[dl,"p''"] \\
        & E & 
    \end{tikzcd}\,, \]
    where $\pi$ is of type (3) and $E$ is a cycle of $n$ $\PP^1$'s.
\end{enumerate}

\end{proposition}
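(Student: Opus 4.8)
The plan is to exhibit mutually inverse constructions between the two sets of data, modelled on the smooth case (the discussion after \ref{Diagramm: E_g,t tower}, and Proposition \ref{Prop: bielliptic curves that are hyperelliptic}) and on the analysis of type-$(\ast)$ covers in Section \ref{Sec: Preliminaries boundary of EEgt}.

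\textbf{From a tower to a tuple.} Given a Galois $(\ZZ/2\ZZ)^2$-tower with $\pi$ of type $(3)$ and $E$ a cycle of $\PP^1$'s, Proposition \ref{Prop: pi of type (3) and p of type (1,2) etc} shows that $p$ is of type $(1,2)$ and $p',p''$ of type $(3)$. By Propositions \ref{Prop: definition of delta} and \ref{Prop: pullback delta in case 1,2)}, $p$ has an associated line bundle $\delta$ on $E$ with $\delta^{\otimes 2}=\BO_E(\Delta)$, where $\Delta=p_\ast\mathcal{R}$ is its branch divisor. I would set $\Delta'=p'_\ast\mathcal{R}'$ and $\Delta''=p''_\ast\mathcal{R}''$, the branch divisors of $p'$ and $p''$. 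Since $p',p''$ are of type $(3)$ these divisors are reduced and supported on the smooth locus of $E$ (their points being regular ramification points of $p'$, $p''$), and $\Delta=\Delta'+\Delta''$ as in the discussion preceding Proposition \ref{Prop: p is of type (1) when t=0}; hence $\delta^{\otimes 2}=\BO_E(\Delta'+\Delta'')$ and, $\Delta$ being reduced, $\Delta'$ and $\Delta''$ are disjoint. The total degree of $\delta$ equals $(g(C')-1)+(g(C'')-1)$, which is the dimension $g$ of the Prym, by the same Hurwitz count as in the smooth case.

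\textbf{From a tuple to a tower.} Conversely, given $(E,\delta,\Delta',\Delta'')$, I would first pick a square root $\delta'\in\Pic(E)$ of $\BO_E(\Delta')$: since $JE\simeq\CC^\ast$ is $2$-divisible, the only obstruction is the parity of the multidegree of $\Delta'$, and this parity is even — it being precisely the condition for a type-$(3)$ double cover of $E$ branched over $\Delta'$ on the smooth locus and ramified over every node to exist, component by component. Put $\delta''\coloneqq\delta\otimes(\delta')^{-1}$, which then automatically satisfies $(\delta'')^{\otimes 2}=\BO_E(\Delta'')$. From $(\delta,\Delta)$, $\Delta=\Delta'+\Delta''$, one obtains the type-$(1,2)$ double cover $p:C\to E$ (of type $(1)$ at each node, since $\Delta$ misses the nodes); from $\Delta'$ (resp. $\Delta''$) one obtains the type-$(3)$ double cover $p':C'\to E$ (resp. $p'':C''\to E$) by taking, on each component $E_i\cong\PP^1$, the double cover of $E_i$ ramified over $\Delta'\cap E_i$ together with the two node points, and gluing the unique points over the nodes — a type-$(3)$ node carries no gluing parameter, so $p',p''$ depend only on $(E,\Delta',\Delta'')$. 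One then checks that the $\ZZ/2\ZZ$-cover $p$ is the ``sum'' of $p'$ and $p''$: over the smooth locus this is the identity $\Delta=\Delta'+\Delta''$, and at each node it is a short computation with the local normal forms recalled in Section \ref{Sec: Preliminaries boundary of EEgt}. Consequently $C,C',C''$ are the three intermediate quotients of a single $(\ZZ/2\ZZ)^2$-cover $f:\tilde C\to E$, and one more local computation at the nodes shows $\pi:\tilde C\to C$ is of type $(3)$. Changing $\delta'$ by the nontrivial element of $JE[2]$ yields an isomorphic tower, so the construction is well defined on isomorphism classes and is inverse to the previous one.

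\textbf{The main obstacle.} The substantive step is the second construction: showing that the three $\ZZ/2\ZZ$-quotients genuinely patch into a $(\ZZ/2\ZZ)^2$-Galois cover of $E$ and that the resulting $\pi$ is of type $(3)$ at every node — with no stray étale or type-$(2)$ nodes appearing — together with the parity of the multidegree of $\Delta'$ that is needed for $p',p''$ to exist. These are all local questions at the nodes of $E$, settled by the explicit local descriptions of double covers of nodal curves in Section \ref{Sec: Preliminaries boundary of EEgt}; the remaining verifications are bookkeeping parallel to the smooth case.
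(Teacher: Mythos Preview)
Your ``tower to tuple'' direction matches the paper's. The reverse direction is where the approaches diverge: the paper does not build $C'$ and $C''$ first and then patch them into a Galois cover. Instead it constructs $C\coloneqq\Spec(\BO_E\oplus\delta^{-1})$ directly, with the $\BO_E$-algebra structure coming from $\delta^{-2}\simeq\BO_E(-\Delta)\hookrightarrow\BO_E$, and then builds $\tilde{C}$ as a double cover of $C$: on each component $N_i$ of the normalization of $C$ it takes the double cover $\tilde{N}_i\to N_i$ associated to the line bundle $p_i^\ast\BO_{\PP^1}(1)$, ramified exactly at the four points of $N_i$ lying over the two nodes of $E$, and glues these covers over the nodes of $C$. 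The intermediate curves $C',C''$ then arise as the remaining $(\ZZ/2\ZZ)^2$-quotients, rather than being constructed separately.

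The practical advantage of the paper's route is that it sidesteps precisely the step where your argument has a gap. To choose a square root $\delta'$ of $\BO_E(\Delta')$ in $\Pic(E)\simeq\ZZ^n\times\CC^\ast$ you need the multidegree of $\Delta'$ to be componentwise even, and your justification for this (``it being precisely the condition for a type-$(3)$ double cover of $E$ branched over $\Delta'$ \ldots\ to exist'') is circular: you are invoking the existence of the very cover $p'$ you are trying to construct. Nothing in the tuple data $(E,\delta,\Delta',\Delta'')$ as stated forces $\Delta'$ alone to have even multidegree; only $\Delta=\Delta'+\Delta''$ is constrained by $\delta^{\otimes 2}=\BO_E(\Delta)$. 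The paper's construction of $\tilde{C}\to C$ never requires a square root of $\BO_E(\Delta')$, so this obstruction does not arise. (One may note that the paper's proof is itself terse about how the particular splitting $\Delta=\Delta'+\Delta''$ is encoded --- it enters through the choice of lift of $\tau$ to $\tilde C$ --- but the explicit component-by-component description makes this a local matter.)
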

\begin{proof}
We have already shown how to construct the data $(E,\delta,\Delta',\Delta'')$ given the tower above (Section \ref{Sec: Preliminaries boundary of EEgt} and Proposition \ref{Prop: definition of delta}). To go in the other direction, let 
\[ C\coloneqq \Spec(\BO_E\oplus \delta^{-1}) \,, \]
where $\BO_E\oplus \delta^{-1}$ has an $\BO_E$-algebra structure induced by
\begin{align*}
(\BO_E\oplus \delta^{-1})\times (\BO_E\oplus \delta^{-1}) &\to \BO_E\oplus \delta^{-1} \\
(a,b), (a',b') &\mapsto (aa'+\phi(bb'),ab'+a'b) \,,
\end{align*}
where $\phi: \delta^{-2}=\BO_E(\Delta'+\Delta'') \hookrightarrow \BO_E$. Let $E_1,\dots,E_n$ be the irreducible components of the normalization of $E$. Suppose $E_i$ meets $E_{i-1}$ at $Q^0_i$ and $E_{i+1}$ at $Q^\infty_i$. Let $N_i=\pi^{-1}(E_i)$. Let $P_i^0,P_i^{0'}$ (resp. $P_i^\infty,P_i^{\infty'}$) be the two points in $N_i$ sitting above $Q_i^0$ (resp. $Q_i^\infty$). We assume that $C$ is obtained by identifying $P_i^0$ with $P_{i-1}^\infty$ (resp. $P_i^{0'}$ with $P_{i-1}^{\infty'}$) for $i\in \ZZ/n\ZZ$. There is a commutative diagram
\[
    \begin{tikzcd}
        C\arrow[d,"p"'] & N_i \arrow[l,"\beta_i"'] \arrow[d,"p_i"] \\
        E & E_i=\PP^1 \arrow[l]
    \end{tikzcd}\,.
\]
 Let $\pi_i:\tilde{N}_i \to N_i$ be the double cover associated to the line bundle $p_i^\ast \BO_{\PP^1}(1)$ ramified at $P_i^0+P_i^{0'}+P_i^\infty+P_i^{\infty'}$. Then $\tilde{C}$ is the nodal curve with irreducible components $\tilde{N}_1,\dots,\tilde{N}_n$ obtained by identifying $\pi_i^{-1}(P_i^0)$ with $\pi_{i-1}^{-1}(P_{i-1}^\infty)$ (resp. $\pi_i^{-1}(P_i^{0'})$ with $\pi_{i-1}^{-1}(P_{i-1}^{\infty'})$) for $i\in \ZZ/n\ZZ$.
\end{proof}

When $\Delta'=0$, we write $(E,\delta,\Delta)$ for the data in the proposition above. It is clear from the above proofs that the ordering of $(d_1,\dots,d_n)$ does not play a role. In other words, if $\ud$ and $\ue$ define the same partition of $g$, we have $\SE_\ud=\SE_\ue$. We thus make the following definition:
\begin{definition}\label{Def: SE ud}
Let $g\geq 0$ and $\ud=(d_1,\dots,d_n)\in \mathscr{P}_g$ be a partition of $g$. We denote by $\SE_\ud$ the set of Pryms in ${\EE}_{g,0}$ corresponding to the datum $(E,\delta,\Delta)$ where 
\begin{itemize}
    \item $E$ is the cycle of $n$ $\PP^1$'s.
    \item $\underline{\deg} \delta=\ud$.
\end{itemize}
\end{definition}
We also make the following definition:
\begin{definition}\label{Def of Egt prime}
    Let $\EE'_{g,t}\subset {\EE}_{g,t}$ be the set of $(P,\Xi)=\mathrm{Prym}(\tilde{C}/C)\in {\EE}_{g,t}$ corresponding to a tower of nodal curves 
   \[   \begin{tikzcd}
        & \tilde{C} \arrow[dl,"\pi"'] \arrow[d,"\pi'"] \arrow[dr,"\pi''"] & \\
        C \arrow[dr,"p"'] & C'\arrow[d,"p'"] & C'' \arrow[dl,"p''"] \\
        & E & 
    \end{tikzcd}\,, \]
with $E$ a smooth elliptic curve and $\pi$ of type $(\ast)$. In the case $t=2$ we require additionally that $C'$ is non-hyperelliptic. We denote by $\EE_{g,2}^\mathrm{h}\subset \EE_{g,2}$ the case where $t=2$, $\pi$ is of type $(\ast)$ and $C'$ is hyperelliptic. \\
We thus have $\EE_{g,t}^\ast\subset \EE'_{g,t}$ but $\EE'_{g,t}$ contains additionally the cases where $C$ and $\tilde{C}$ are nodal but $p$ is of type (2). This corresponds to degenerations where a point of $\Delta'$ converges to a point of $\Delta''$.
\end{definition}
We have the following converse to Lemma \ref{Lemma: E cannot have tails}:
\begin{lemma}\label{Lem: EEgt and Sd not Jacobians}
    Let $g\geq 4$, $0\leq t \leq g/2$, and $(P,\Xi)\in \EE'_{g,t}$, or $(P,\Xi)\in \EE_{g,2}^{\mathrm{h}}$, or $(P,\Xi)\in \SE_\ud$ with $\deg \ud= g$. Then $(P,\Xi)$ is neither a Jacobian nor decomposable.
\end{lemma}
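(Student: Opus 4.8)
The statement has two parts: that $(P,\Xi)$ is not decomposable, and that it is not a Jacobian; since the ppav's in $\mathcal{J}_g\setminus\mathcal{A}_g^{\mathrm{dec}}$ are exactly the Jacobians of smooth curves, the second part means $(P,\Xi)\ne(JC_0,\Theta)$ for every smooth projective $C_0$ of genus $g$. For indecomposability, recall from \cite[Sec.~4.4]{Birkenhake2004} that a ppav is indecomposable iff its Gauss map is generically finite. In the proof of Theorem \ref{Thm: degree Gauss Map on Egt, in general} (resp. of Theorem \ref{Thm: Degree Egt with E cycle of P1's}) one exhibits the auxiliary map $\tilde{\GG}\colon W\dashrightarrow\PPdelta$ of diagram (\ref{Diagramm: definition of GG in Egt smooth case}), whose generic fibre is the finite set $Z_M\cap W$ (Lemma \ref{Lem: number of points of W cap Z outside of the domain of GG}) of cardinality $[Z_M]\cdot[W]$, a sum of products of binomial coefficients from which one subtracts only $2^g+2\ns(\tilde{C}/C)$ (or $2\ns(\tilde{C}/C)$ when $t=0$); since $\ns(\tilde{C}/C)$ is bounded well below this binomial expression (Corollary \ref{Cor: possible values of eta for Egt and SEud}), for general $M$ the ``bad'' points do not exhaust $Z_M\cap W$, so $\tilde{\GG}$ is dominant with finite generic fibre. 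As $\tilde{\GG}$ factors through $\GG$ via the degree-$2$ cover $g|_{\tilde{\Xi}}\circ\alpha|_W\colon W\to\Xi$ (Propositions \ref{Prop: g is of degree 2} and \ref{Prop: g is of degree 2, singular case}) and $\dim\Xi=g-1$, the Gauss map $\GG$ of $\Xi$ is itself dominant and generically finite, hence $(P,\Xi)$ is indecomposable.

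For the Jacobian part I would first dispose of the numerically separated cases. The Gauss map of the Jacobian of a smooth curve has degree $\binom{2g-2}{g-1}$ if the curve is non-hyperelliptic and $2^{g-1}$ if it is hyperelliptic, and these are the only two possibilities. Comparing with the formulas of Theorems \ref{Thm: degree Gauss Map on Egt, in general} and \ref{Thm: Degree Egt with E cycle of P1's}: Proposition \ref{Prop: Numerical analysis Gauss degree Egt} gives $\deg\GG>\binom{2g-2}{g-1}$ when $t=1$ (for $g\ge5$), and $2^{g-1}<\deg\GG<\binom{2g-2}{g-1}$ when $2\le t\le g/2$; and for $\SE_\ud$ with $n\ge2$ and $\ud\ne(1,g-1)$ the explicit value $\mu(\ud)$ again lies strictly between $2^{g-1}$ and $\binom{2g-2}{g-1}$. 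In each of these estimates the correction $-\ns(\tilde{C}/C)$ is absorbed using the bound on $\ns$ from Corollary \ref{Cor: possible values of eta for Egt and SEud}. Thus in all these cases $\deg\GG\notin\{\binom{2g-2}{g-1},\,2^{g-1}\}$, so $(P,\Xi)$ is not a Jacobian.

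This leaves the loci $\EE'_{g,0}$, $\SE_{g}$ and $\SE_{1,g-1}$ — together with $\EE'_{4,1}$ in genus $4$ — on which, precisely when $\ns(\tilde{C}/C)=0$ (resp. $\ns=4$ on $\EE'_{4,1}$), one has $\deg\GG=\binom{2g-2}{g-1}$, the value for a generic Jacobian, so that no numerical invariant can help. On these loci I would distinguish $(P,\Xi)$ from a Jacobian through the singular locus of $\Xi$. For $t=0$, Proposition \ref{Prop: pullback of L_P to P'xP''} (and Proposition \ref{Prop: g is of degree 2, singular case} in the nodal case) provides an étale double cover $\pi''^{\ast}\colon P''_\delta\to P$ with $\pi''^{\ast}\Xi=\Theta''\cap P''_\delta$ scheme-theoretically, where $C''$ is bielliptic of genus $g+1$; hence, up to this free $\ZZ/2\ZZ$-action, $\Sing(\Xi)$ is the locus $W^1_g(C'')\cap\Nm_{p''}^{-1}(\delta)$ cut out of $\Sing(\Theta'')=W^1_g(C'')$ by the translated Prym. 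If $(P,\Xi)$ were a Jacobian $(JC_0,\Theta)$, then by Riemann--Kempf $\Sing(\Xi)$ would have to be $W^1_{g-1}(C_0)$ up to the double cover, which the Brill--Noether geometry of the bielliptic $C''$ prevents — there is a special component of $W^1_g(C'')$ on which the relevant line bundle degenerates, forcing a local structure on $\Sing(\Xi)$ incompatible with that of $W^1_{g-1}(C_0)$. This is the content of the classical fact, due to Debarre \cite{Debarre1988} and Naranjo \cite{Naranjo1992BiellipticPryms}, that $\EE_{g,0}$ is an irreducible component of $\mathcal{N}^{(g)}_{g-4}$ distinct from $\mathcal{J}_g$; one then propagates the conclusion to every member of $\EE'_{g,0}$, and, via Lemma \ref{Lemma: Singular Pryms all come from EE g,0}, to $\SE_g$ and $\SE_{1,g-1}$, along the semistable family $\tilde{\cC}\to S$ of Section \ref{Subsec: Degenerations in Egt}, the only subtlety being the boundary members where $C$ itself is nodal of type $(2)$; the genus-$4$ locus $\EE'_{4,1}$ is treated in the same spirit via Beauville's description of $\mathcal{N}^{(4)}_0$.

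The main obstacle is exactly this last step: the indecomposability and the first Jacobian estimates are routine once the Gauss degree is in hand, whereas the coincidental loci are precisely the bielliptic Pryms whose theta divisor has the \emph{same} Gauss degree as a generic Jacobian, so that one is forced to separate them from Jacobians by a finer invariant — in practice the detailed structure of $\Sing(\Xi)$ — and to control that invariant uniformly over a non-proper family whose total space is only partially smooth.
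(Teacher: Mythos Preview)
Your indecomposability argument is essentially the paper's: both observe that a decomposable ppav has Gauss degree zero and then invoke the computations of Theorems \ref{Thm: degree Gauss Map on Egt, in general} and \ref{Thm: Degree Egt with E cycle of P1's}. One correction: Corollary \ref{Cor: possible values of eta for Egt and SEud} is stated only for $g=5$, so it cannot supply the bound on $\ns(\tilde{C}/C)$ you claim for general $g$; if you want an explicit bound, Lemma \ref{Lemma: Rough Bound on X^G_d} is the relevant statement.

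The Jacobian part has a genuine gap---the one you yourself flag as ``the main obstacle''. Your numerical strategy fails exactly on the loci $\EE'_{g,0}$, $\SE_g$, $\SE_{1,g-1}$ (and $\EE'_{4,1}$) where the Gauss degree can equal $\binom{2g-2}{g-1}$, and your fallback of invoking that $\EE_{g,0}$ and $\mathcal{J}_g$ are distinct irreducible components of $\mathcal{N}^{(g)}_{g-4}$ only shows that a \emph{general} member of $\EE_{g,0}$ is not a Jacobian, not that \emph{every} member of $\EE'_{g,0}$ (let alone of the boundary loci $\SE_\ud$) is. Two distinct components can intersect, and the lemma is precisely the assertion that the relevant loci miss $\mathcal{J}_g$ entirely; ``propagating along the semistable family'' is not an argument for this, and the appeal to the structure of $\Sing(\Xi)$ is left as a sketch.

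The paper avoids the whole issue with Shokurov's criterion \cite{Shokurov1984PrymVarieties}: since $\pi$ is of type $(\ast)$, the Prym $\Prym(\tilde{C}/C)$ is a Jacobian only if $C$ is hyperelliptic, trigonal, quasi-trigonal, or a plane quintic. The quasi-trigonal case forces an incompatible structure on $E$ and on the type of $p$; the remaining three are ruled out by the pushforward inequality of Proposition \ref{Prop: covering pullback line bundle sequence}: writing any $L$ on $C$ as $p^\ast M(D)$ with $D$ $p$-simple gives $\hh^0(C,L)\leq \hh^0(E,M)+\hh^0(E,M(p_\ast D)\otimes\delta^{-1})$, and with $\deg\delta=g\geq 4$ this prevents $C$ from carrying a $g^1_2$, $g^1_3$, or $g^2_5$. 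This is uniform over $\EE'_{g,t}$ and $\SE_\ud$ and takes two lines.
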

\begin{proof}
 By assumption $\pi$ is of type $(\ast)$ thus by \ref{Prop: pi of type (3) and p of type (1,2) etc}, $p$ is of type (1,2). Thus by \cite{Shokurov1984PrymVarieties}, if $(P,\Xi)$ were a Jacobian, $C$ would be either hyperelliptic, trigonal, quasi-trigonal or a plane quintic. The quasi-trigonal case is impossible because then $E$ is a rational curve with two points identified and $p$ has to be of type (3) above the singular point. The other cases are impossible by \ref{Prop: covering pullback line bundle sequence}: If $L=p^\ast M(D)$ with $M\in \Pic^k(E)$ and $D$ $p$-simple, then
    \[ \hh^0(C,L)\leq \hh^0(E,M)+\hh^0(E,M(p_\ast D) \otimes \delta^{-1}) \,, \]
    where $\deg \delta=g$. Finally, $(P,\Xi)$ cannot be decomposable by Theorem \ref{Thm: degree Gauss Map on Egt, in general} and \ref{Thm: Degree Egt with E cycle of P1's}, since the Gauss degree would be zero in these cases (we do not use Lemma \ref{Lem: EEgt and Sd not Jacobians} in the proof of either these theorems).
\end{proof}
The following lemma sums up the results of this section:
\begin{lemma}\label{Lemma: decomposition of Egt}
For $g\geq 4$ and $0\leq t\leq g/2$, we have
\[ \EE_{g,t}\setminus (\mathcal{J}_g\cup \mathcal{A}_{g}^\mathrm{dec})\cap \EE_{g,t}= \EE'_{g,t}\cup \left( \EE_{g,2}^\hh \right)_{\text{if $t=2$}} \cup  \bigcup_{ \ud \leq (t,g-t)} \SE_\ud \,, \]
where $\ud\leq \ue$ if and only if $\ud$ is a subdivision of the partition $\ue$.
\end{lemma}
\begin{proof}
    By Lemmas \ref{Lemma: E cannot have tails} and \ref{Lemma: Singular Pryms all come from EE g,0}, if $(P,\Xi)\in {\EE}_{g,t}$ is not a Jacobian or decomposable, then either $E$ is a cycle of $\PP^1$'s, and then we are in $\SE_\ud$ for some $\ud$, or $E$ is smooth and irreducible, and in that case the only singularities of $C$ come from singularities of type (2) for $p$. This corresponds to limits in $\EE_{g,t}^\ast$ where a branch point $Q'\in \Delta'$ and a branch point $Q''\in \Delta''$ converge together. The converse is given by Lemma \ref{Lem: EEgt and Sd not Jacobians}.  \\
    Finally, for $\ud,\ue\in \mathscr{P}_g$ we have
    \[ \SE_\ud \subset \overline{\SE}_{\underline{e}} \iff \ud \leq \underline{e} \,. \]
    Indeed, the singular elliptic curve can degenerate to have more components and the ramification points can move to the new component. It is also clear from the proof of \ref{Lemma: Singular Pryms all come from EE g,0} that the ordering of the coefficients in $\ud$ do not play a role.  \par 
\end{proof}

\subsection{The Gauss degree on the boundary of \texorpdfstring{$\EE_{g,t}^\ast$}{Egt}}\label{Sec: The Gauss degree on the boundary}
\subsubsection*{The Gauss degree on $\EE'_{g,t}$}
In this case, the computation of the Gauss degree is immediate:
\begin{proposition}\label{Prop: Gauss degree on Egt'}
    Let $g\geq 4$, $0\leq t\leq g/2$ and $(P,\Xi)\in \EE'_{g,t}$ or $(P,\Xi)\in \EE_{g,t}^\hh$. Then the degree of the Gauss map $\GG:\Xi \dashrightarrow \PP^{g-1}$ is the same as the one given by Theorem \ref{Thm: degree Gauss Map on Egt, in general}.
\end{proposition}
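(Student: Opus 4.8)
## Proof Proposal

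The plan is to reduce the statement to Theorem \ref{Thm: degree Gauss Map on Egt, in general} by a specialization/continuity argument, treating $\EE'_{g,t}$ as containing only mild degenerations of members of $\EE_{g,t}^\ast$: those where $C$ and $\tilde C$ acquire nodes because a point of $\Delta'$ collides with a point of $\Delta''$, i.e. $p:C\to E$ becomes of type $(1,2)$ while $E$ stays smooth. The Gauss degree is constant on flat families in which the generic $\Theta$-divisor's singular locus behaves uniformly, so the real content is checking that the combinatorial computation of Section \ref{Sec: Construction of the Families Eg,t} goes through verbatim in this setting.

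First I would observe that all the structural inputs used in the proof of Theorem \ref{Thm: degree Gauss Map on Egt, in general} have already been extended to the nodal case in Section \ref{sec: Pryms arising from admissible covers}: Proposition \ref{Prop: covering g:P' times P'' to P in the singular case} gives the isogeny $g:P'\times P''\to P$ with the same degree and polarization splitting; Proposition \ref{Prop: g is of degree 2, singular case} gives $\deg(g\restr{\tilde\Xi})=2$; and Proposition \ref{Prop: Pullback line bundle sequence singular case} is the nodal analogue of Proposition \ref{Prop: covering pullback line bundle sequence} used throughout Step 4. Since $E$ is still a smooth elliptic curve, the curves $C'$ and $C''$ are smooth (the type-(3) loci of $p'$, $p''$ sit over the \emph{nodes} of $C'$, $C''$, but when $E$ is smooth there are no such nodes — the only degeneration is at the type-(2) points of $p$, which lie on the smooth locus). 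Hence $C'$, $C''$ are genuinely smooth curves and the entire apparatus of Step 1 through Step 5 — the diagram \ref{Diagramm: definition of GG in Egt smooth case}, the description of $\F$ in Proposition \ref{Prop: precise description of FF}, the cohomology class computation \ref{Equ: class of Z M}–\ref{Equ: intersection number Z_M with W}, and Lemma \ref{Lem: number of points of W cap Z outside of the domain of GG} — applies with no change, using the extended invariant $\ns(\tilde C/C)$ from Definition \ref{Def: eta singular case}, which is defined purely in terms of $E$, $\delta$, $\Delta$, $\delta^i$, $\Delta^i$ and therefore makes sense identically.

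The one point that needs care is that $\tilde C$ is now nodal, so one must check that $g\restr{\tilde\Xi}:\tilde\Xi\to\Xi$ and the Abel–Jacobi map $\alpha\restr W:W\to\tilde\Xi$ still behave as in the smooth case near the relevant points — in particular that $W$ still has the quadratic-cone singularity structure of Proposition \ref{Prop: smoothness and singularities of W} at the points with $D^i\le R^i$, $\Nm(D',D'')=\delta$, and that these are still the right correction terms. But $W\subset C'_t\times C''_{g-t}$ lives on a product of \emph{smooth} curves and $\Nm$ is still a morphism of abelian varieties, so the local analysis of Proposition \ref{Prop: smoothness and singularities of W} is word-for-word the same; the nodality of $\tilde C$ only affects the target $J\tilde C$, not the source. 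Alternatively, and perhaps cleaner, one can invoke upper semicontinuity of the Gauss degree along a one-parameter family $\tilde{\cC}/S$ degenerating a general member of $\EE_{g,t}^\ast$ to $(P,\Xi)$ together with the fact that $\ns$ is itself constructible and lower semicontinuous under such collisions, matching the jump in the formula.

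The main obstacle I anticipate is bookkeeping rather than conceptual: one must verify that when two branch points collide (so that $\Delta'$ and $\Delta''$ share a point), the counts in Lemma \ref{Lem: number of points of W cap Z outside of the domain of GG} — the $2^g$ points over $(p'^\ast M',p''^\ast M'')$ and the $2\ns(\tilde C/C)$ points over $(R',R'')$ — still have the stated multiplicities, and that no \emph{new} exceptional points appear in $Z_M\cap W$ at the nodes of $\tilde C$. This should follow because the nodes of $\tilde C$ do not lie over branch points of $p'$ or $p''$ in a way that interferes (the type-(2) point of $p$ pulls back to ramification of $\pi'$ and $\pi''$, not to a new contribution to $W$), but it requires a careful pass through Step 4 with the collision made explicit. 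I would also separately note that the special cases $t=2$ with $C'$ hyperelliptic and $(g,t)=(4,2)$ are preserved under these degenerations (Proposition \ref{Prop: bielliptic curves that are hyperelliptic} is about $E$, $\delta$, $\Delta'$ and is insensitive to whether $\Delta'$ meets $\Delta''$), so Step 5 carries over unchanged as well.
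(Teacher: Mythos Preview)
Your main line of argument is correct and matches the paper's proof exactly: the key observation is that $C'$ and $C''$ remain smooth (since $p'$, $p''$ are of type (3) and $E$ is smooth), so the computation of Theorem \ref{Thm: degree Gauss Map on Egt, in general} on $C'_t\times C''_{g-t}$ goes through verbatim once Proposition \ref{Prop: g is of degree 2, singular case} supplies the degree-$2$ map $\tilde\Xi\to\Xi$. Your fourth-paragraph bookkeeping worries are therefore unnecessary, and note that your semicontinuity ``alternative'' would on its own only give an inequality, not the exact degree; but the direct argument you describe first is precisely what the paper does.
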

\begin{proof}
    By definition, we have a tower
   \[   \begin{tikzcd}
        & \tilde{C} \arrow[dl,"\pi"'] \arrow[d,"\pi'"] \arrow[dr,"\pi''"] & \\
        C \arrow[dr,"p"'] & C'\arrow[d,"p'"] & C'' \arrow[dl,"p''"] \\
        & E & 
    \end{tikzcd}\,, \]
where $E$ is smooth. By \ref{Prop: pi of type (3) and p of type (1,2) etc}, $p'$ and $p''$ are of type (3), thus $C'$ and $C''$ are smooth. By \ref{Prop: g is of degree 2, singular case}, there is a map of generic degree $2$:
\[ (\pi'^\ast+\pi''^\ast): \tilde{\Xi} \to \Xi \,.\]
The proof of Theorem \ref{Thm: degree Gauss Map on Egt, in general} to compute the Gauss degree on $\tilde{\Xi}$ then follows through ad verbatim, as $C'$ and $C''$ are smooth bielliptic curves.
\end{proof}

\subsubsection*{The Gauss degree on $\SE_\ud$}
For the remainder of the section, we will focus on the case where $E$ degenerates to a cycle of $\PP^1$'s. Let $\ud=(d_1,\dots,d_n)\in \mathscr{P}_g$ be a partition of $g$ in its reduced form (we assume $d_i>0$ for $1\leq i \leq n$). We make the following definition:
\begin{equation}\label{Equ: Boundary Bielliptic Pryms, definition of mu}
 \begin{aligned}
        \mu_\ud &\coloneqq b_{\ud-(1^n)}\left\lbrace (1-4x)^{-3/2} \prod_{i=1}^n \left(1-2x/d_i \right) \right\rbrace_{x^{n-1}} \\
        &=\frac{1}{2} b_{\ud-(1^n)}\sum_{\epsilon\in \{0,1\}^{n} } (n-l(\epsilon)) b_{n-l(\epsilon)} \prod_{i=1}^n (-2/d_i)^{\epsilon_i}  \,,
    \end{aligned}
\end{equation}
where $l(\epsilon)\coloneqq \#\{i\,|\, \epsilon_i\neq 0\}$ is the length of the partition $\epsilon$, $(1^n)=(1,\dots,1)$, $b_k=\binom{2k}{k}$ is the middle binomial coefficient and
\[ b_\ud=b_{d_1}\cdots b_{d_n} \,. \] 
See Section \ref{Sec: App: Computation of the coefficient mu} for alternate forms of $\mu_\ud$. We have the following:
\begin{theorem}\label{Thm: Degree Egt with E cycle of P1's}
Let $g\geq 1$, $\ud=(d_1,\dots,d_n)\in \mathscr{P}_g$ and $(P,\Xi)\in \SE_\ud$. The degree of the Gauss map $\GG:\Xi\dashrightarrow \PP^{g-1}$ is given by
\[ \deg(\GG)= \mu_\ud-2\ns(\tilde{C}/C) \,. \]
\end{theorem}
\begin{remark}
    We will show in a subsequent paper that this result can be recovered from a more general calculation of Chern-Mather class for cyclic curves \cite{podelski2023boundary}.
\end{remark}
The proof of Theorem \ref{Thm: degree Gauss Map on Egt, in general} adapts to our case, so we will mirror it and point out where additional details are required.
\par 
\par  \vspace{.5cm} \textbf{Step one: Setting the scene.}
Let $(P,\Xi)\Prym(\tilde{C}/C)\in \SE_\ud$ correspond to the datum $(E,\delta,\Delta)$ with
\[ \underline{\deg}(\delta)=\ud=(d_1,\dots,d_n) \,. \]
We keep the notations of the previous section. Let $E_1,\dots,E_n$ be the irreducible components of the normalization of $E$. We fix an identification $E_i=\PP^1$ such that $E_i$ meets $E_{i-1}$ at $0$ and $E_{i+1}$ at $\infty$ for $i\in \ZZ/n\ZZ$. Let $\beta: N\to C''$ be the normalization. For $0\leq i \leq n$ let $N_i$ be the component of $N$ above $E_i$, and $P_i^0,P_i^\infty$ be the points in $N_i$ sitting above $0,\infty\in E_i$ respectively. The following diagram is commutative
\begin{center}
    \begin{tikzcd}
       N_i \arrow[rr,bend left=30,"\beta_i"] \arrow[r,hook] \arrow[d,"p_i"'] & N \arrow[r,"\beta"'] \arrow[dr,"p_N"'] & C''\arrow[d,"p''"] \\
        E_i=\PP^1 \arrow[rr]  & & E \,.
    \end{tikzcd}
\end{center}
We will use the notation
\[ N_\ud \coloneqq N_{1,d_1}\times\cdots N_{n,d_n} \,, \]
where $N_{i,d_i}$ is the $d_i$-symmetric product of $N_i$. By abuse of notation, we write $\Nm: N_{\underline{d}}=N_{1,d_1}\times\cdots\times N_{n,d_n}\dashrightarrow JE$ for the sum of the norm maps associated to the composition $p''\circ \beta_i:N_i\to E$, which is defined away from divisors containing a point $P^0_i$ or $P^\infty_i$. We define the multiplication map (defined away from divisors containing both $0$ and $\infty$) by
\begin{align*}
     m_i:\PP^1_{d_i} &\dashrightarrow \PP^1\\
     P_1+\cdots + P_n &\mapsto P_1 \cdots P_n \,,
\end{align*}
and 
\begin{align*}
    m:\PP^1_{\underline{d}}& \dashrightarrow \PP^1 \\
    (D_1,\dots,D_n) &\mapsto m_{1}(D_1)\cdots m_{n}(D_n) \,,
\end{align*}
where $\PP^1_{\underline{d}}$ is defined in the same way as $N_\ud$. The following diagram is commutative
\begin{center}
\begin{tikzcd}
 {N_\ud} \arrow[rr,dashed, "\alpha" ] \arrow[d, "{p_{N\ast}}"' ] & &{\Pic^\ud(C'')} \arrow[d,"{\Nm}"]  \\
    {\PP^1_\ud} \arrow[r,dashed,"m"'] & {\PP^1} & \CC^\ast={\Pic^\ud(E)} \arrow[l,phantom,"\supset"]
\end{tikzcd}\,,
\end{center}
where $\alpha:N_\ud \dashrightarrow \Pic^\ud(C'')$ is the Abel-Jacobi map, defined away from singular divisors. Let
\[ W \coloneqq \overline{\Nm^{-1}(\delta) }\subset N_{\underline{d}}\,. \]
We now compute the cohomology class of $W$. Notice that $\PP^1_{d_i}=\PP \HH^0(\PP^1,\BO(d_i))=\PP^{d_i}$ and that the canonical coordinates are given by
\[ [a^i_0:\cdots: a^i_{d_i}]= V( a^i_0 u^{d_i} + a^i_1 u^{d_i-1} v + \cdots + a^i_{d_i} u^{d_i} ) \subset \PP^1 \,, \]
Where $[u:v]$ are the canonical coordinates on $\PP^1$. With these coordinates, the map $m$ is
\[ m:(a^1,\dots,a^n)\mapsto (a^1_0 \cdots a^n_0: a^1_{d_1}\cdots a^n_{d_n}) \,. \]
Thus 
\[ [\overline{m^{-1}\{pt\}}]=c_1(\BO_{\PP^1_{\underline{d}}}(1,\dots,1))=h_1+\cdots+h_n \in \HH^1(\PP^1_{\underline{d}}) \,, \]
where $h_i$ is the hyperplane class in $\PP^1_{d_i}$, pulled back to $\PP^1_{\underline{d}}$ by projection. Thus
\begin{equation}\label{Equ: Class of W in N underline d}
[W]=(p_1\times\cdots \times p_n)^\ast (h_1+\cdots+h_n)=2(x_1+\cdots+x_n)\in \HH^1( N_{\underline{d}} )\,.
\end{equation}
where $x_i$ is the pullback of the class $[N_{i,d_i-1}]\in \HH^1(N_{i,d_i})$. \par 
\par  \vspace{.5cm} \textbf{Step two: A configuration of Gauss maps.}
Just as in the smooth case, we have the following commutative diagram, which we explain below
\begin{equation}\label{Tikzcd: Gauss map configuration in the Egt singular case}
    \begin{tikzcd}
        \delta \arrow[d,hook] & W\arrow[l,dashed] \arrow[r,dashed] \arrow[d,hook] \arrow[rrr,dashed,bend left=20,"\tilde{\GG}"] & \tilde{\Xi} \arrow[r,"\pi''^\ast"'] \arrow[d,hook]& \Xi \arrow[r,dashed, "\GG"'] & \PP( \HH^0(N,\omega_{N})) \\
        JE & N_{\underline{d}} \arrow[rrr,dashed,bend right=20,"\tilde{\GG''}"'] \arrow[l,dashed,"\Nm"] \arrow[r,dashed,"\alpha"] & \Theta'' \arrow[rr,dashed,"\GG''"] & & \PP(\HH^0(C'',\omega_{C''})) \arrow[u,dashed,"\mathcal{F}"']
    \end{tikzcd}\,.
\end{equation}
Recall that sections of $\omega_{C''}$ are $1$-forms $\omega$ on $N$ which can have poles at $P^0_i$ and $P^\infty_i$, subjected to the conditions
\[ \Res_{P^\infty_i}\omega + \Res_{P^0_{i+1}} \omega = 0 \,, \quad \text{for} \quad i\in \ZZ/n\ZZ \,. \]
We thus have an inclusion of $\BO_{C''}$-modules
\[ \beta_\ast \omega_N \subset \omega_{C''} \subset \beta_\ast \omega_N(\sum_i P^0_i+P^\infty_i) \,, \]
thus
\[ \HH^0(N,\omega_N) \subset \HH^0(C'',\omega_{C''}) \subset \HH^0(N,\omega_N(\sum_i P_i^0+P_i^\infty)) \,. \]
Let $s_E$ be a generator of $\HH^0(E,\omega_E)$. As a $1$-form, $s_E$ is given on $E_i$ by $dz/z$ for $z$ the usual coordinate on $\PP^1=E_i$ centered at $0$. Let $s_{R''}=p''^\ast s_E$. As $p_i:N_i\to E_i$ is ramified at $R_i+P_i^0+P_i^\infty$ we have $\divv (s_{R''})=R''$ as a section of $\omega_{C''}$. For dimension reasons we have
\[ \HH^0(C'',\omega_{C''})\simeq \HH^0(N,\omega_N)\oplus \langle s_{R''} \rangle \,. \]
We see from the above discussion that $\HH^0(N,\omega_N)$ (resp. $\langle s_{R''}\rangle$) is the -1 (resp. +1) eigenspace for the action of $\tau''$ on $\HH^0(C'',\omega_{C''})$. We can thus identify
\[ T^\vee P''= T^\vee JN = \HH^0(N,\omega_N) \,. \]
We define $\mathcal{F}:\PP \HH^0(C'',\omega_{C''})\dashrightarrow \PP \HH^0(N,\omega_N)$ to be the projection with center $[s_{R''}]$. The diagram (\ref{Tikzcd: Gauss map configuration in the Egt singular case}) is commutative for the same reasons as in the smooth case. \par
\vspace{.5cm} \textbf{Step three: A computation in cohomology.} Let $M=[s]\in |\omega_N|$ be general, let $s_i=s\restr{N_i}$ and $M_i=\divv s_i$. Let $\tilde{s}\in \HH^0(C'',\omega_{C''})$ be the corresponding section of the dualizing bundle on $C''$. Let
\[ \tilde{M}\coloneqq \divv \tilde{s} = \sum_{i=1}^n \beta_{i,\ast} M_i + (\lambda_i,1,1)_{Q_i} \in |\omega_{C''}|\]
for some coefficients $\lambda_i\in \CC^\ast$ which depend on the coefficients of $s$ in the decomposition $\HH^0(N,\omega_N)=\oplus_i \HH^0(N_i,\omega_{N_1})$. Let 
\begin{align*}
    V&\coloneqq \overline{\mathcal{F}^{-1}(M)}=\langle \tilde{M} , R'' \rangle \subset |\omega_{C''}|\,, \\
    \text{and} \quad Z & \coloneqq \overline{\tilde{\GG}''^{-1}(V)}=\{ D\in N_{\underline{d}} \,|\, D \leq \beta^\ast F \,, \text{for } F\in V\} \subset  N_{\underline{d}}\,. 
\end{align*}
Let $E_{K,i}$ be the evaluation bundle on $N_{i,d_i}$ corresponding to the line bundle $\omega_{N_i}(P_i^0+P_i^\infty)$ (see Section \ref{Sec: The evaluation bundle}). Let 
\[ \mathrm{ev}_i: \HH^0(N_i,\omega_{N_i}(P_i^0+P_i^\infty))\otimes_\CC \BO_{N_{i,d_i}} \to E_{K,i} \]
be the corresponding evaluation maps. The locus $Z$ is then the $1$-th determinantal variety associated to the morphism of vector bundles
\[ \langle \tilde{s} ,s_{R''} \rangle \otimes_\CC \BO_{N_{\underline{d}}} \overset{\oplus_i\mathrm{Res}_{N_i}}{\longrightarrow} \bigoplus_{i=1}^n \HH^0(N_i,\beta_i^\ast \omega_{C''}) \otimes_\CC \BO_{N_{\underline{d}}} \overset{\oplus_i \mathrm{ev}_i }{\longrightarrow} \bigoplus_{i=1}^n \mathrm{pr}_i^\ast E_{K,i} \,, \]
where $\mathrm{pr}_i : N_{\underline{d}}\to N_{i,d_i}$. Z is of (the expected) dimension $1$ since for any divisor $F\in V$, there are finitely many $D\leq \beta^\ast F$. In what follows we denote by $\HH_\ast(-)=\HH_{2\ast}(-,\QQ)$ and $\HH^\ast(-)=\HH^{2\ast}(-,\QQ)$ the even-dimensional homology and cohomology with $\QQ$-coefficients. By intersection theory we have
\[ [Z]=c_{g-1}\left(\bigoplus_{i=1}^n \mathrm{pr}_i^\ast E_{K,i} \right)\cap [N_\ud] \in \HH_1(N_\ud) \,. \]
By \ref{Cor: pushforward chern class of evaluation bundle} and \ref{Equ: Class of W in N underline d}, we thus have in $\HH_\ast(\Nd)$
\begin{align*}
    [W]\cdot [Z] &=2(x_1+\cdots+x_n)\cdot c_{g-1}(\oplus_i \mathrm{pr}_i^\ast E_{K,i}) \\
    &=2\sum_{i=1}^n x_i \cap c_{d_i-1}(E_{K_i}) \prod_{j\neq 1} c_{d_i}(E_{K_i}) \\
    &=2 \sum_{i=1}^n \binom{2d_i-1}{d_i}\prod_{j\neq i} \binom{2d_j}{d_j} \\
    &=n b_\ud\,,
\end{align*}
where $b_\ud \coloneqq b_{d_1}\cdots b_{d_n}$.
\par  \vspace{.5cm} \textbf{Step four: Ruling out special points.}
To complete the computation of the degree of $\GG$, we need to rule out special points, which is done by the following lemma:
\begin{lemma}\label{Lemma: Points outside the domain of GG in the singular case}
Under the assumptions of Theorem \ref{Thm: Degree Egt with E cycle of P1's}, for general $M\in \PP \HH^0(N,\omega_N)$, the intersection $W\cap Z$ is zero-dimensional and the number of points outside the domain of definition of $\tilde{\GG}:W\dashrightarrow \PP \HH^0(N,\omega_N)$ is
\[ 2\tilde{\mu}(\underline{d})+4\ns(\tilde{C}/C) \,,\]
counted with multiplicity, where
\[ \tilde{\mu}(\underline{d})\coloneqq \sum_{\epsilon^0,\epsilon^\infty\in \{0,1\}^n} \min(|\epsilon^0|,|\epsilon^\infty|)\prod_{i=1}^n \binom{2d_i-\epsilon^0_i-\epsilon^\infty_i}{d_i-\epsilon^0_i-\epsilon^\infty_i} \,, \]
and
\[ \ns(\tilde{C}/C)\coloneqq \frac{1}{2}\#\{ D\leq \Delta\,|\, \underline{\deg}(D)=\underline{d} \,, \text{and } m(D)=\delta \}\,, \]
where $m:\PP^1_{\underline{d}}\dashrightarrow \PP^1$ is the multiplication map and $\delta$ is identified with a point in $\PP^1$.
\end{lemma}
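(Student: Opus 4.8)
The plan is to follow the proof of Lemma~\ref{Lem: number of points of W cap Z outside of the domain of GG} step by step, the one new phenomenon being that the branch curve $E$ is nodal. Recall that on $W$ the map $\tilde{\GG}$ is the composite $W\hookrightarrow N_{\ud}\overset{\alpha}{\dashrightarrow}\Theta''\overset{\GG''}{\dashrightarrow}|\omega_{C''}|\overset{\mathcal{F}}{\dashrightarrow}\PP\HH^0(N,\omega_N)$, so a point $D\in W\cap Z$ lies outside the domain of $\tilde{\GG}$ exactly when: \textbf{(a)} some $D_i$ contains a node-preimage $P_i^0$ or $P_i^\infty$, so $\alpha$ is undefined at $D$; \textbf{(b)} $\GG''(\alpha(D))$ is the centre $[s_{R''}]$ of the projection $\mathcal{F}$; or \textbf{(c)} $\alpha(D)$ lies on a Brill--Noether locus where $\GG''$ is undefined. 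First I would treat (c) and the $0$-dimensionality together, exactly as in loc.\ cit.: using the incidence variety $\Gamma''=\{(D,H)\in N_{\ud}\times|\omega_{C''}| : D\le\beta^\ast H\}$ to resolve $\tilde{\GG}''$, the rank formula of Lemma~\ref{Lem: rank of projection from Gamma to C_g}, and the bound $\hh^0(\BO_{C''}(D))\le 2$ from Proposition~\ref{Prop: covering pullback line bundle sequence}, the relevant Brill--Noether loci have codimension $\ge 2$ in $N_{\ud}$; since the pencil $V=V_M$ through $[s_{R''}]$ becomes generic as $M$ varies, for general $M$ the curve $Z$ meets these loci only at the finitely many forced points of type (a) or (b), and $W\cap Z$ is finite.

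For the points of type (b): $\GG''(\alpha(D))=[s_{R''}]$ forces $\beta_\ast D\le R''$, i.e.\ $D=(D_1,\dots,D_k)$ with $D_i\le R_i$, where $R_i$ is the smooth ramification divisor of $p_i\colon N_i\to E_i$. Pushing forward to $E$ and using that $\Delta$ is reduced (so $p_i$ maps $R_i$ bijectively onto $\Delta_i$), these are in bijection with $\{D'\le\Delta : \underline{\deg}(D')=\ud,\ m(D')=\delta\}$, a set of cardinality $\ns(\tilde{C}/C)$. At such a $D$ every point of $D$ is a ramification point of some $p_i$, so the local computation of Proposition~\ref{Prop: smoothness and singularities of W} applies verbatim: $W$ has a quadratic hypersurface singularity of maximal rank at $D$, while (by the $\Gamma''$-argument, $D$ being in the domain of $\alpha$ and $R''$ reduced) $Z$ is smooth at $D$ with generic tangent line for general $M$. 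Since $g\ge 5$, so $g-2>g/2$ and a maximal-rank quadric cone of dimension $g$ contains no $(g-2)$-plane, this tangent line is not in the tangent cone of $W$, hence each of these $\ns(\tilde{C}/C)$ points contributes $2$ to $[W]\cdot[Z]$, giving the summand $2\ns(\tilde{C}/C)$.

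The heart of the matter is the points of type (a). Since $\Nm=m\circ p_{N\ast}$ is undefined precisely on divisors meeting \emph{both} some $P_i^0$ and some $P_j^\infty$, a point $D\in\overline{W}$ meeting a node-preimage must meet at least one $P_i^0$ and one $P_j^\infty$; record these by $\epsilon^0,\epsilon^\infty\in\{0,1\}^k$ (for general $M$, with multiplicity one each). I would then carry out two calculations whose product, summed over $(\epsilon^0,\epsilon^\infty)$, is meant to give $2\tilde{\mu}(\ud)$. \emph{Enumeration:} such divisors sit in the fibre of $Z$ over the unique point $\tilde{M}\in V$ whose divisor acquires $P_i^0+P_i^\infty$ on each $N_i$; working on $\Gamma''$, which (unlike $N_{\ud}$ directly) still sees the divisor $\tilde M$ through the nodes, one counts $\prod_{i=1}^k\binom{2d_i-\epsilon^0_i-\epsilon^\infty_i}{d_i-\epsilon^0_i-\epsilon^\infty_i}$ divisors of type $(\epsilon^0,\epsilon^\infty)$. \emph{Multiplicity:} since $p_i$ is ramified at $P_i^0$ and $P_i^\infty$, in suitable coordinates near $D$ the subvariety $W$ is cut out by $\bigl(\prod_i w_i\bigr)^2=(\text{unit})\cdot\bigl(\prod_j s_j\bigr)^2$, with $|\epsilon^0|$ of the $w_i$ and $|\epsilon^\infty|$ of the $s_j$; this is a union of two branches, each of multiplicity $\min(|\epsilon^0|,|\epsilon^\infty|)$ at $D$ with tangent cone a union of coordinate hyperplanes, so that for general $M$ the generic tangent line of the smooth curve $Z$ at $D$ gives local intersection multiplicity $2\min(|\epsilon^0|,|\epsilon^\infty|)$. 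Summing the products over all $(\epsilon^0,\epsilon^\infty)$ yields $2\tilde{\mu}(\ud)$, and together with the $2\ns(\tilde{C}/C)$ from (b) this finishes the proof.

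The step I expect to be the main obstacle is the \emph{Enumeration}, and more generally the fine local geometry at the type-(a) points: keeping exact track of which limiting divisors actually appear in $\overline{Z}$ over $\tilde{M}$ (separating genuine contributions from the components of $\Gamma''$ over the Brill--Noether loci, and pinning the count down to exactly $\prod_i\binom{2d_i-\epsilon^0_i-\epsilon^\infty_i}{d_i-\epsilon^0_i-\epsilon^\infty_i}$), together with checking that $Z$ is really smooth there for general $M$ so that the multiplicity computation $I(W,Z;D)=2\min(|\epsilon^0|,|\epsilon^\infty|)$ is valid. Everything else is a direct transcription of the smooth-case argument.
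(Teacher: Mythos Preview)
Your overall architecture matches the paper's proof exactly: the same decomposition into points where (a) $\alpha$ is undefined, (b) one lands on $R''$, and (c) one hits a Brill--Noether locus; the same enumeration of the type-(b) points via the bijection with subdivisors of $\Delta$; and the same identification of the type-(a) points as the subdivisors of $\beta^\ast\tilde M$ containing at least one $P_i^0$ and one $P_j^\infty$. The local equation you write for $W$ at a type-(a) point, and the resulting multiplicity $2\min(|\epsilon^0|,|\epsilon^\infty|)$, is also what the paper obtains (cf.\ equation~(\ref{Equ: coordinate definition of W})).

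The substantive difference is in how the multiplicities are \emph{verified}. You argue by genericity of the tangent line of $Z$; the paper instead parametrizes $Z$ explicitly by $t\mapsto D(t)$ using the zeros of $\tilde s - t s_{R''}$, substitutes into the local equation of $W$, and computes the leading Taylor coefficient directly. The reason the paper does this is that your genericity claim is not automatic. When $|\epsilon^0|\neq|\epsilon^\infty|$ the tangent cone of $W$ is a union of coordinate hyperplanes $\{z_{0,i}=0\}$ (resp.\ $\{w_{\infty,j}=0\}$), and avoiding it only requires the derivatives $P_i^{0\prime}(0)$ (resp.\ $P_j^{\infty\prime}(0)$) to be nonzero, which is immediate. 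But when $|\epsilon^0|=|\epsilon^\infty|$ the tangent cone is $u(D)\prod z_{0,i}^2-\lambda\prod w_{\infty,j}^2=0$, and avoiding it is the nontrivial condition $u(D)\prod (P_i^{0\prime}(0))^2\neq\lambda\prod(P_j^{\infty\prime}(0))^2$. The paper checks this by a trick: writing $(\tilde s - t s_{R''})(\tilde s + t s_{R''})=p_N^\ast(f-t^2 h)$ on $\PP^1$ and computing the relevant derivatives as derivatives of roots of $f-t^2h$, one sees the leading coefficient is a rational expression in the $Q^i$'s that is nonzero for general $M$. Your sketch does not supply this step, and you correctly flag it as the main obstacle.

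A smaller point: your type-(b) argument invokes ``$g\ge 5$ so $g-2>g/2$'', but Theorem~\ref{Thm: Degree Egt with E cycle of P1's} is stated for $g\ge 1$. The paper avoids any dimension restriction by again doing the explicit parametrization-and-substitution at the points $D\le R''$, showing the order of vanishing is exactly $2$. Finally, your enumeration at type-(a) points actually produces $\prod_i\binom{2d_i-2}{d_i-\epsilon^0_i-\epsilon^\infty_i}$ (there are $2d_i-2$ non-nodal points in $\beta_i^\ast\tilde M$), which is the form used in Proposition~\ref{Prop: combinatorial computation mu underline d}; the binomial in the Lemma's displayed formula appears to be a typo.
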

With the above Lemma, Theorem \ref{Thm: Degree Egt with E cycle of P1's} follows at once: By Lemma \ref{Lem: App: Computation of coefficient mu} below we have
\[ \tilde{\mu}(\ud)=\frac{n}{2}b_\ud-\mu_\ud=\frac{1}{2}[W]\cdot [Z]-\mu_\ud \,. \]
The map $g\restr{\tilde{\Xi}}:\tilde{\Xi}\to \Xi$ is of degree $2$ since it is the restriction of the isogeny $P''\to P$. $W\to \tilde{\Xi}$ is birational (a general line bundle in $\tilde{\Xi}$ has a unique non-zero section, vanishing away from the singular locus). We thus have
\[ \deg(\GG)=\frac{1}{2}( [W]\cdot [Z]- 2\tilde{\mu}(\underline{d})-4\ns(\tilde{C}/C)))=\mu_\ud-2\ns(\tilde{C}/C) \,. \]

\begin{proof}[Proof of Lemma \ref{Lemma: Points outside the domain of GG in the singular case}]
We need to check which points of $W\cap Z$ lie in the locus of indeterminacy of the following composition
\begin{center}
    \begin{tikzcd}
    W \arrow[r,hook] \arrow[rrrr,dashed, bend left=15 ,"\tilde{\GG}"] & N_{\underline{d}} \arrow[r,dashed,"\alpha"'] & \Theta'' \arrow[r,dashed, "{\GG''}"'] & {|\omega_{C''}|} \arrow[r,dashed,"\mathcal{F}"'] & {|\omega_N|}
     
    \end{tikzcd}
\end{center}
First notice that $\mathcal{F}:V\dashrightarrow M$ is undefined at $\{R''\}$. Every $D\leq R''$ of multidegree $\underline{d}$ such that $\Nm(D)=\delta$, corresponds to a point in $W\cap Z$ lying above $R''$. We will show at the end of the proof that the intersection multiplicity of $W\cap V$ at these points is $2$. $\GG'':\Theta''\to |\omega_{C''}|$ is undefined at $W^1_{g}(C'')$, which is of codimension $3$ inside $\Theta''$ and the image of $\Lambda_{\Theta''}\restr{W^1_{g}}\to |\omega_{C''}|$ is of codimension $2$. Thus a general line $V\subset |\omega_{C''}|$ will not meet this locus, except eventually at $R''\in|\omega_{C''}|$, but this point was already taken care of above. Finally, $\alpha:N_{\underline{d}}\dashrightarrow \Theta''$ is undefined exactly at divisors $D$ containing $P_i^0$ or $P_i^\infty$ for some $i$. At these points,
\[ N_{\underline{d}}\overset{\Nm_{p_i}}{\dashrightarrow} \PP^1_{\underline{d}} \overset{m}{\dashrightarrow} \PP^1 \dashrightarrow JE \]
is not well-defined either. For $\lambda\in \CC^\ast$, a point is in the closure $\overline{m^{-1}(\lambda)}$ if it contains $Q^0_i$ and $Q^\infty_j$ for some $i,j$, but not if it contains only $Q^0_i$'s or only $Q^\infty_i$'s. Since $s_{R''}$ is non-zero at the singular points, and $\tilde{s}$ vanishes at the singular points, we see that the only divisor $F\in V$ containing singular points is $\tilde{M}=\divv \tilde{s}=\sum_i \beta_{i,\ast} M_i+(\lambda_i,1,1)_{Q_i}$. The points in $W\cap Z$ sitting above $\tilde{M}$ are thus (for $M$ general)
\[ \{ D\leq \beta^\ast \tilde{M} \, |\, P^0_i+P^\infty_j \leq D \,, \text{for some }i,j \} \,. \]
We have to show that the multiplicity of these points in the intersection $W\cap Z$ is exactly
\[ 2\min( \# \text{ of }P^0_i\text{'s}\,,\, \#\text{ of }P^\infty_i\text{'s}) \,. \]
We will compute this multiplicity explicitly. Suppose
\[ D=\sum_{i\in I} P^0_i+\sum_{i\in J} P^\infty_i + P^1+\cdots+P^l \,. \]
Let $(z:w)\in\PP^1$ be the usual coordinates. Locally near $P^j$, $p_i:N_i\to \PP^1$ is étale, thus $(z:1)$ lifts to a coordinate near $P^j$ which we call $z_j$. At $P^0_i$ and $P^\infty_i$, $p_i$ is ramified, thus there is a coordinate $z_{0,i}$ (resp. $w_{\infty,i}$) near $P^0_i$ (resp. $P^\infty_i$) such that $p_i(z_{0,i})=(z_{0,i}^2:1)$ (resp. $p_i(w_{\infty,i})=(1:w_{\infty,i}^2)$). Let $(\lambda,1)\in \PP^1$ be the preimage of $\delta$ under the map $\PP^1\dashrightarrow JE$. Then $W$ is defined near $D$ by
\begin{equation}\label{Equ: coordinate definition of W}
     \prod_{i\in I} z_{0,i}^2 \prod_{i=1}^l z_i- \lambda \prod_{i\in J } w_{\infty,i}^2 =0 \,.
\end{equation} 
$z_i$ is non-zero at $P^i$, thus $W$ has a singularity of degree $2\min(|I|,|J|)$ at $D$. To prove the Lemma, we have to show that the intersection with $Z$ is ``transverse", in other words, that the tangent line to $Z$ is not contained in the tangent cone of $W$ at $D$. For simplicity, we show this for $n=1$ (the proof is the same for higher $n$, the notation just becomes a bit more tedious). Recall $s\in \HH^0(N,\omega_N)=p_N^\ast \HH^0(\PP^1,\BO_{\PP^1}(g-1))$, thus there is $F=Q^1+\cdots+Q^{g-1}\in \PP^1_{g-1}$ such that $\divv s=p_N^\ast F$. Consider $\tilde{s}$ and $s_{R''}$ as sections of $\omega_N(P^0+P^\infty)$ on $N$. Recall $\divv \tilde{s}=P^0+P^\infty +\divv s $. Suppose $D=P^0+P^\infty+P^1+\cdots+P^{g-2}$. Since $\tilde{s}$ has simple zeroes and $s_{R''}$ is non-zero at the points of $D$, for $t\in \CC$ small enough, there are holomorphic functions $P^i(t):(\CC,0)\to (N,P^i)$ such that
\[ \tilde{s}(P^i(t))-ts_{R''}(P^i(t))=0\,, \quad \text{for $i\in \{\infty,0,1,\dots,2g-2\}.$} \]
Then locally $t\mapsto D(t)=P^0(t)+P^\infty(t)+P^1(t)+\cdots+P^{g-2}(t)$ is a parametrization of $Z$ near $D$. Viewing $P^i(t)$ as having value in $\CC$ using the coordinates $z_i$, $z_{0,i}$ and $w_{\infty,i}$ respectively, and plugging into \ref{Equ: coordinate definition of W}, the multiplicity of $Z\cap W$ is the degree of vanishing at $t=0$ of
\begin{equation}\label{Equ: function degree of vanishing Z cap W}
     (P^0(t))^2 P^1(t) \cdots P^{d-2}(t)-\lambda (P^\infty(t))^2 \,.
\end{equation}
We have
\begin{align*} 
(\tilde{s}-ts_{R''})\cdot \tau''^\ast(\tilde{s}-ts_{R''})&=(\tilde{s}-t s_{R''})\cdot (\tilde{s}+ts_{R''}) \\
&= \tilde{s}^2-t^2 s_{R''}^2 \\
&= p_N^\ast ( f-t^2 h)\,,
\end{align*}
where
\[ f=zw\prod_{i=1}^{g-1}(z-Q^i w)^2 \,, \quad \text{and} \quad h=\prod_{i=1}^{2g} (z-Q_i w)\in \HH^0(\PP^1,\BO(2g)) \,. \]
$Q^0(t)$ and $Q^\infty(t)$ are the zeroes of $f-t^2 h$ converging to $0$ and $\infty$ respectively as $t$ goes to $0$. Making a change of variables $t^2=s$, we have
\[ \frac{\dv Q^0}{\dv s}\restr{0}= \frac{1}{(f/h)'\restr{(0,1)}}=\frac{\prod_{i=1}^{2g} Q_i}{\prod_{i=1}^{g-1} (Q^i)^2} \,, \]
and
\[ \frac{\dv Q^\infty}{\dv s}\restr{0}= \frac{1}{(f/h)'\restr{(1,0)}}=1 \,.\] 
Recall that $Q_1\cdots Q_{2g}=\lambda^2$, thus the leading coefficient in \ref{Equ: function degree of vanishing Z cap W} is $t^2$ times
\[ \frac{ \lambda^2 \prod_{i=1}^{g-2} p_N(P^i)}{\prod_{i=1}^{g-1} (Q^i)^2}- \lambda \,. \]
The $p_N(P^i)$ are among the $Q^i$'s, but since the $Q^i$'s are general in $\PP^1$, the above is non-zero.\par 
Finally, we compute the intersection multiplicity of $Z\cap W$ at a point $D\leq R''$ such that $\Nm(D)=\delta$. As before, we first assume $n=1$. Suppose $D=P_1+\cdots+P_g$. As before, let $P_i(t)$ be the zeroes of $t\tilde{s}-s_{R''}$, and $Q_i(t)=p_N(P_i(t)$. Then $Q_i(t)$ are the zeroes of $t^2 f-h$ converging to $Q_i$ as $t$ goes to zero. Setting $s=t^2$ we have
\[ \frac{\dv Q_i}{\dv s}\restr{0}= \frac{1}{(h/f)'\restr{(Q_i,1)}}=\frac{Q_i \prod_{j=1}^{g-1}(Q_i-Q^j)^2}{\prod_{j\neq i } (Q_i-Q_j)} \,. \]
Thus the coefficient in front of $Q_1(t)\cdots Q_g(t)-\lambda$ is $t^2$ times
\[ \sum_{i=1}^g Q'_i(0) \prod_{j\neq i} Q_j = \sum_{i=1}^g \lambda \frac{\prod_{j=1}^{g-1}(Q_i-Q^j)^2}{\prod_{j\neq i}{Q_i-Q_j}}\,, \,. \]
which is non-zero as the $Q^j$'s are general. In the case $n>1$, recall that $s=\sum_i s_i \in \oplus_i \HH^0(N_i,\omega_{N_i})$. Replacing $s$ with $\sum_i \mu_i s_i$ for general coefficients $\mu_i$, we change the values of $Q'_{i,j}(0)$, thus the coefficient in front of $t^2$ is non-zero. This finishes the proof of the Lemma.
\end{proof}

\subsubsection{Computation of the coefficient \texorpdfstring{$\mu$}{mu}}\label{Sec: App: Computation of the coefficient mu}
For $\alpha\in \CC$ and $k\in \NN_{>0}$, we make the following convention
\[ \binom{\alpha}{0}=1\,, \quad \binom{\alpha}{k}=\frac{\alpha(\alpha-1)\cdots (\alpha-k+1)}{k!}\,, \quad \binom{\alpha}{-k}=0\,.  \]
For all $\alpha\in \mathbb{R}$, we have
\begin{equation}
    (1+x)^\alpha=\sum_{k\geq 0} \binom{\alpha}{k} x^k \,. 
\end{equation}
From this above, we can deduce the generating function for the middle binomial coefficients $b_n\coloneqq \binom{2n}{n}$
    \begin{align}
        &(1-4x)^{-1/2}=\sum_{n\geq 0} b_n x^n \,,
    \end{align}
    and its derivative
    \begin{align}
        &2(1-4x)^{-3/2}=\sum_{n\geq 0} (n+1)b_{n+1} x^{n}\,. \label{Equ: generating series of bn derived}
    \end{align}
For $\ud=(d_1,\dots,d_n)\in \NN^n$ we denote by $l(\ud)=\#\{i\,|\, d_i\neq 0\}$. We define
\[ \ud-\underline{1}\coloneqq \big(\max(d_i-1,0)\big)_{1\leq i \leq n} \,. \] 
We now make the following computation:
\begin{lemma}\label{Lem: App: Computation of coefficient mu}
    Let $\ud=(d_1,\dots,d_n)\in (\NN_{>0})^n$, then
    \begin{equation}\label{Equ: App: Lem: Computation coeff mu}
    \sum_{\epsilon^0,\epsilon^\infty\in \{0,1\}^n} \min(|\epsilon^0|,|\epsilon^\infty|)\prod_{i=1}^n \binom{2d_i-2}{d_i-\epsilon^0_i-\epsilon^\infty_i}=\frac{n}{2}  b_\ud  -\mu_\ud \,, 
    \end{equation}
    where 
    \begin{align*}
        \mu_\ud &\coloneqq b_{\ud-\underline{1}}\left\lbrace (1-4x)^{-3/2} \prod_{i\,|\,d_i \neq 0} \left(1-2x/d_i \right) \right\rbrace_{x^{l(\ud)-1}} \\
        &=\frac{1}{2} b_{\ud-\underline{1}}\sum_{\epsilon\in \{0,1\}^{l(\ud)} } (l(\ud)-l(\epsilon)) b_{l(\ud)-l(\epsilon)} \prod_{i\,|\,d_i\neq 0} (-2/d_i)^{\epsilon_i}\\
        &= \sum_{\epsilon \in \{0,1\}^n} l(\epsilon) 2^{l(\ud)-l(\epsilon)} \binom{l(\epsilon)-1}{\lfloor (l(\epsilon)-1)/2 \rfloor} \prod_{i=1}^n \binom{2d_i-2}{d_i-\epsilon_i}\,.
    \end{align*}
\end{lemma}
\begin{remark}
   Note that $\mu_\ud$ only depends on the partition induced by $\ud$.
\end{remark}
\begin{proof} 
Note that if $d_i=0$, the product on the left in (\ref{Equ: App: Lem: Computation coeff mu}) is zero if $\epsilon^0+\epsilon^\infty \neq 0$. After changing $n$ we can thus assume $d_i>0$ for all $i$. 
For all $k\neq 0$ we have
\[ \binom{2k-2}{k}=\binom{2k-2}{k-2} \,, \]
thus the product on the left in the lemma only depends on the value of $\epsilon\coloneqq \overline{\epsilon^0+\epsilon^\infty}\in (\ZZ/2\ZZ)^n$. The computation of the coefficient in front of this term is a straightforward manipulation of binomial coefficients. Suppose $\epsilon=(1,\dots,1,0,\dots,0)$ with $k$ ones followed by $n-k$ zeroes. Then
\begin{align*}
    \sum_{\epsilon^0+\epsilon^\infty = \epsilon } \min(|\epsilon^0|,\epsilon^\infty|)&= \sum_{i=0}^k \sum_{j=0}^{n-k} (j+\min(i,k-i))\binom{k}{i} \binom{n-k}{j} \\
    &= \sum_{i=0}^k (2^{n-k} \min(i,k-i) + (n-k)2^{n-k-1} ) \binom{k}{i} \\
    &=n 2^{n-1} - k 2^{n-k} \binom{k-1}{\lfloor (k-1)/2 \rfloor} \,. 
\end{align*}
For all $k\neq 0$ we have $\binom{2k}{k}=2\binom{2k-1}{k}$, thus
\begin{align*}
n 2^{n-1} \sum_{\epsilon \in \{0,1\}^n} \prod_{i=1}^n \binom{2d_i-2}{d_i-\epsilon_i}
&= n 2^{n-1}\sum_{\epsilon \in \{0,1\}^n} \prod_{i=1}^n \binom{2d_i-2}{d_i-\epsilon_i}\\
    &= n 2^{n-1}\prod_{i=1}^n (\binom{2d_i-2}{d_i}+\binom{2d_i-2}{d_i-1}) \\
    &= n 2^{n-1}\prod_{i=1}^n \binom{2d_i-1}{d_i}= \frac{n}{2} b_\ud \,.
\end{align*}
For all $k\neq 0$ we have $\binom{2k-2}{k}=(1-1/k)\binom{2k-2}{k-1}$, thus
\begin{align*}
    \mu_\ud &= b_{\ud-\underline{1}}\sum_{\epsilon \in \{0,1\}^n} l(\epsilon) 2^{l(\ud)-l(\epsilon)} \binom{l(\epsilon)-1}{\lfloor (l(\epsilon)-1)/2 \rfloor} \prod_{i\,|\, \epsilon_i=0} (1-\frac{1}{d_i}) &\\
    &= b_{\ud-\underline{1}} \sum_{I\subset [\![1,n]\!]} \frac{ (-1)^{l(I)}}{d_I} \sum_{ \substack{ \epsilon \in \{0,1\}^n \\ \forall i\in I\,, \epsilon_i=0} } l(\epsilon) 2^{n-l(\epsilon) } \binom{l(\epsilon)-1}{\lfloor (l(\epsilon)-1)/2 \rfloor} &\\
    &=b_{\ud-\underline{1}} \sum_{I\subset [\![1,n]\!]} \frac{ (-1)^{l(I)}}{d_I} \sum_{ k\geq 0 } k 2^{n-k } \binom{k-1}{\lfloor (k-1)/2 \rfloor} \binom{n-l(I)}{k} &\\
    &=b_{\ud-\underline{1}}\frac{1}{2} \sum_{I\subset [\![1,n]\!] } (n-l(I)) b_{n-l(I)} \prod_{i\in I} -2/d_i   \\
    &= b_{\ud-\underline{1}}\left\lbrace (1-4x)^{-3/2} \prod_{i=1}^n\left(1-2x/d_i \right) \right\rbrace_{x^{n-1}}\,. & \text{(by \ref{Equ: generating series of bn derived})}
\end{align*}
where we use the identity $\sum_{k=0}^n 2^{n-k} \binom{k}{\lfloor k/2 \rfloor} \binom{n}{k}=b_{n+1}/2$. The lemma follows.
\end{proof}

\section{Additional isolated singularities}\label{Sec: additional isolated Singularities}
Let $(P,\Xi)=\mathrm{Prym}(\tilde{C}/C)\in \EE'_{g,t}$. By \ref{Prop: singularities are quadratic of maximal rank}, $\Xi$ has exactly $\tns(\tilde{C}/C)$ isolated singularities. These singularities are quadratic of maximal rank, and located at $2$-torsion points. The same holds for ppav's in $\SE_\ud$ (with $\ns(\tilde{C}/C)$). For each of these singularities, the degree of the Gauss map drops by $2$. In this section, we will investigate what values $\ns(\tilde{C}/C)$ can take. We will first reduce this to a combinatorial question on groups, and then carry out a complete study in the case $g=5$.

\subsection{A combinatorial question on groups}
Let $\ud=(d_1,\dots,d_n)\in \NN^n$, and $g=\deg \ud =d_1+\cdots+d_n$. Let
\[d^0=1 \,, \quad \text{and}\quad  d^k \coloneqq d_1+d_2+\cdots +d_k\,, \quad \text{for $1\leq k \leq n$.}  \]
We say that a set of indices $I\subset[\![1,2g]\!]$ is a \emph{relation} if \[\mathrm{Card}(I\cap [\![2d^{k-1}+1,2d^k]\!])=d_k \quad \text{for $1\leq k \leq n$.} \]
Let $\Rel_\ud$ be the set of relations in $[\![1,2g]\!]$ with respect to $\ud$. Let $G$ be an abelian group and $X=(x_1,\dots,x_{2g})\in G^{2g}$. We define 
\begin{align*} 
\mathcal{I}^G_{\ud}(X)&\coloneqq \{ I\in \Rel_\ud \,|\, \sum_{i\in I} x_i=0 \} \,,\\
\nss_\ud^G(X)&\coloneqq \frac{1}{2}\Card(\mathcal{I}_{\ud,X}) \,, 
\end{align*}
to be the set (resp. the number) of relations verified by $X$. We will omit the sub and supscripts $G$ and $\ud$ when it is clear from the context. We define the diagonal with respect to $\ud $ by
\begin{align*} 
Z_\ud  &\coloneqq \{ X=(x_1,\dots,x_{2g})\in G^{2g}\,|\, x_i=x_j \quad \text{for some} \quad 2d^{k-1}<i<j\leq 2d^k 
\} \\
&\subset G^{2g}\,. 
\end{align*}
We define
\[ \nss^G_{\ud,\mathrm{max}}\coloneqq \max\{ \nss^G_\ud(X)\,|\,X\in G\setminus Z_\ud \,, \quad x_1+\cdots+x_{2g}=0\}\,. \]
\begin{question}\label{Question 2}
Let $\ud\in \NN^n$ with $\deg \ud=g$ and $G$ be an abelian group (for instance $G=(E,+)$ an elliptic curve or $G=(\CC^\ast,\cdot)$ a torus). What is the value of $\nss^G_{\ud,\mathrm{max}}$?
\end{question}

We have the following intermediate question about $\ZZ$-modules.
\begin{question}
Let $\ud\in \NN^n$ with $\deg \ud=g$ and $e_1,\dots,e_{2g}\in \ZZ^{2g}$ be the canonical basis. For $I\in \Rel_\ud$ let
\[ T_I\coloneqq \sum_{i\in I} e_i \,. \]
Let $\bar{1}=\sum_{i=1}^{2g} e_i$. What is the biggest set of relations $\mathcal{I}\subset \Rel_\ud$ such that the $\ZZ$-module
\[ M_\mathcal{I} \coloneqq \bar{1}+\langle T_I \rangle_{I\in \mathcal{I}}\subset \ZZ^{2g} \]
does not contain $e_i-e_j$ for all $2d^{k-1}<i<j\leq 2d^{k}$, for all $1\leq k \leq n$.
\end{question}
The answer to the second question must be higher to the first question since if $X$ is a solution to the first question, then $\mathcal{I}_X$ must work for the second question. Some remarks are in order:
\begin{itemize}
    \item If we have an embedding of groups $G_1\hookrightarrow G_2$, any set of solutions in $G_1$ translates to solutions in $G_2$.
    \item Because we impose $\sum_{i=1}^{2g} x_i=0$, relations always come in pairs, thus $\nss^G_\ud(X)$ is an integer:
    \[ \forall I\in \mathcal{I}^G_\ud(X)\,, \quad I^o\coloneqq \left([\![1,2g]\!] \setminus I \right)\in \mathcal{I}^G_\ud(X) \,.\]
\end{itemize}

The following bound on $\nss^G_{\ud,\max}$ is far from optimal, as we will see below:
\begin{lemma}\label{Lemma: Rough Bound on X^G_d}
    Let $G$ be an abelian group and $\ud=(d_1,\dots,d_n)\in \NN^n$ with $d_n>0$. We have
    \[ \nss^G_{\ud,\max}\leq \frac{1}{2d_n} \prod_{k=1}^n \binom{2d_k}{d_k} \,. \]
\end{lemma}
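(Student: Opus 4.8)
The plan is to reduce the statement to a purely combinatorial estimate by ``integrating out'' the indices chosen inside the longest block. Partition $[\![1,2g]\!]$ into the $n$ consecutive blocks of sizes $2d_1,\dots,2d_n$ occurring in the definition of a relation, and call them $J_1,\dots,J_n$; since $d_1\le\cdots\le d_n$, the block $J_n$ is a longest one. Every relation $I\in\Rel_\ud$ decomposes uniquely as $I=I'\sqcup S$, where $I'$ chooses $d_k$ indices in $J_k$ for $1\le k\le n-1$ (there are $\prod_{k<n}\binom{2d_k}{d_k}$ such ``partial relations'') and $S$ is a $d_n$-subset of $J_n$. Writing the entries of $X$ lying in $J_n$ as $y_1,\dots,y_{2d_n}$, the relation $I$ is satisfied by $X$ precisely when $\sum_{j\in S}y_j=-\sum_{i\in I'}x_i$. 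Hence, if one knows that for every $t\in G$ at most $\tfrac1{d_n}\binom{2d_n}{d_n}$ of the $d_n$-subsets $S\subseteq J_n$ have $\sum_{j\in S}y_j=t$, then grouping the relations counted by $\nss(X)$ according to $I'$ and summing gives $\nss(X)\le\big(\prod_{k<n}\binom{2d_k}{d_k}\big)\cdot\tfrac1{d_n}\binom{2d_n}{d_n}=\tfrac1{d_n}\prod_{k=1}^n\binom{2d_k}{d_k}$, and taking the supremum over admissible $X$ finishes the proof. (Note that the constraint $\sum_{i=1}^{2g}x_i=0$ plays no role here.)

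So the heart of the matter is the following sublemma, which I would isolate: if $y_1,\dots,y_{2d}$ are \emph{pairwise distinct} elements of an abelian group $G$ and $t\in G$, then the family $\mathcal S_t$ of $d$-element subsets $S\subseteq[\![1,2d]\!]$ with $\sum_{j\in S}y_j=t$ satisfies $|\mathcal S_t|\le\tfrac1d\binom{2d}{d-1}\le\tfrac1d\binom{2d}{d}$. This is exactly where the hypothesis $X\notin Z_\ud$ enters: it forces the $y_j$'s inside $J_n$ to be distinct. The key point is that no two distinct members of $\mathcal S_t$ can meet in $d-1$ elements, for such a pair would differ by a single swap $a\leftrightarrow b$, and equating the two sums would give $y_a=y_b$. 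Consequently $\mathcal S_t$ is a packing in which every $(d-1)$-subset of $[\![1,2d]\!]$ is contained in at most one member, and double-counting the incidences $(S,T)$ with $T\subseteq S$, $|T|=d-1$, yields $d\,|\mathcal S_t|\le\binom{2d}{d-1}$; since $\binom{2d}{d-1}=\tfrac{d}{d+1}\binom{2d}{d}$, the sublemma follows.

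Modulo the sublemma the argument is pure bookkeeping, so I expect the sublemma to be the only real obstacle; the swap/packing argument above disposes of it cleanly, and in fact produces the slightly sharper bound $\tfrac1{d}\binom{2d}{d-1}$, which is consistent with the remark that the stated estimate is far from sharp. (If $|G|<2d_n$ there are no admissible configurations at all and the inequality is vacuous, so one may assume $|G|\ge 2d_n$ throughout.)
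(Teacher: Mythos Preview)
Your argument is correct. Both proofs rest on the same elementary observation---two admissible choices in the last block that differ by a single swap would force two of the $y_j$'s to coincide---but they package it differently.

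The paper first passes to the $\ZZ$-module reformulation (Question~2), then uses the complement symmetry $I\mapsto I^o$ (coming from $\bar{1}\in M_{\mathcal I}$) to reduce to the subset $\mathcal I'$ of relations containing the fixed index $2g$. For each $I\in\mathcal I'$ it swaps $2g$ for any of the $d_n$ indices in $I^o$ lying in the last block; the resulting map $(I,i)\mapsto \tau_iI$ is shown to be injective into the set of relations not containing $2g$, whence $d_n|\mathcal I'|\le\tfrac12|\Rel_\ud|$ and $|\mathcal I|=2|\mathcal I'|\le\tfrac1{d_n}|\Rel_\ud|$.

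Your route stays in $G$, does \emph{not} invoke the complement pairing at all, and instead isolates a self-contained sublemma: among $2d$ distinct group elements, the number of $d$-subsets with a prescribed sum is at most $\tfrac1d\binom{2d}{d-1}$. Your proof of the sublemma is the Fisher-type double count for a packing in which every $(d-1)$-subset lies in at most one block. Summing over the ``partial relations'' $I'$ then gives the bound. Two minor gains of your approach: it shows that the global constraint $\sum_i x_i=0$ is irrelevant for this estimate, and it yields the slightly better factor $\tfrac1{d_n}\binom{2d_n}{d_n-1}=\tfrac1{d_n+1}\binom{2d_n}{d_n}$ in place of $\tfrac1{d_n}\binom{2d_n}{d_n}$, which is consistent with the paper's remark that the stated bound is far from optimal.
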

\begin{proof}
    Let $g=\deg \ud$. Suppose $\mathcal{I}\subset \Rel_\ud$ is a maximal set such that 
    \[ M_\mathcal{I} \coloneqq \bar{1}+\langle T_I \rangle_{I\in \mathcal{I}}\subset \ZZ^{2g} \]
does not contain $e_i-e_j$ for all $2d^{k-1}<i<j\leq 2d^{k}$, for all $1\leq k \leq n$. Let
\[\mathcal{I}'\coloneqq \{ I\in \mathcal{I}\,|\, 2g\in I\} \,.\]
note that $\mathcal{I}=\mathcal{I}'\sqcup \mathcal{I}'^o)$ where $I^0\coloneqq [\![1,2g]\!] \setminus I$. For all $I\in \mathcal{I}'$, and for all $i\in I^o\cap [\![2g-2d_n+1,2g]\!]$\,, let 
\[ \tau_{i} I \coloneqq \{i\}\cup I\setminus \{2g\}\in \Rel_\ud \,. \]
Suppose that $\tau_i I=\tau_j J$ for some $I,J\in \mathcal{I}'$, $i,j\in [\![2g-2d_n+1,2g]\!] $. We then have
\[ T_{\tau_i I}-T_{\tau_j J}=e_i-e_j \in M_\mathcal{I} \]
which is a contradiction. For $I\in \Rel_\ud$ we have $|I\cap [\![2g-2d_n+1,2g]\!]|=d_n$ thus by the pigeonhole principle, we have
\[ |\mathcal{I}|=2|\mathcal{I}'|\leq \frac{1}{d_n} |\Rel_\ud|=\frac{1}{d_n} \prod_{k=1}^n \binom{2d_k}{d_k} \,.\]
\end{proof}
The following table summarizes the results for $g=5$. It was obtained by solving question \ref{Question 2} with an algorithm first and then working out the minimal group and solution set by hand. We have chosen to represent the solution $X$ as a tuple of sets $X_k\subset G$ of cardinal $2d_k$, since the ordering of $X_k$ is irrelevant. The last column corresponds to the rank of the $\ZZ$-module $M_{\mathcal{I}_X}$ where $M_\mathcal{I}$ was defined above. We have shown the combinations of $G$ and $X$ that maximize $\nss^G_\ud(X)$ for some given $\ud$, but we also included some combinations of $G$ and $X$ that lead to lower values of $\nss^G_\ud(X)$.
\stepcounter{equation}
\begin{table}[h]\caption{Values of $\nss^G_\ud$ for $g=5$.}\label{Table: maximal eta values for give G and d}
$
\hspace*{-35pt}{
\begin{array}{|c|c|c|c|c|}
\hline
\underline{d} & \nss(X) & G & X=(X_1,\dots,X_n) & \mathrm{rank} M \\
\hline
(1,1,1,1,1) & 5 & \ZZ_3 & \{0,1\},\{0,1\},\{0,1\},\{0,1\},\{0,2\}&6\\
\hline 
(1,1,1,2) & 6 & \ZZ_4 & \{0,1\},\{0,2\},\{0,3\} ,\ZZ_4 & 7 \\
\hline 
(1,1,3) & 7 & \ZZ_6 & \{0,1\},\{3,5\},\ZZ_6 &8 \\
\hline
(1,2,2) & 8 & \ZZ_4 & \{1,3\},\ZZ_4,\ZZ_4 & 8 \\
\hline
(1,2,2) & 7 & \ZZ_5 & \{1,3\} , \ZZ_5\setminus \{2\} , \ZZ_5\setminus \{2\} & 8 \\ 
\hline  
(1,4) & 9 & \ZZ_8 & \{0,4\},\ZZ_8 & 9 \\
\hline 
(1,4) & 8 & \ZZ_2\times \ZZ_6 & \{(0,0),(0,4)\},G\setminus \{(0,2),(0,5),(1,1),(1,2)\} & 9 \\
\hline
(2,3)& 10 & \ZZ_6 & \{0,2,3,4\},\ZZ_6& 9 \\
\hline 
(2,3) & 9 & \ZZ_7 & \{1,2,3,6\}, \ZZ_7\setminus \{5\} & 9 \\
\hline 
(2,3) & 8 & \ZZ_2\times \ZZ_4 & \{(0,0),(0,3),(1,0),(1,1)\} , G\setminus \{(1,1),(1,3)\} & 9  \\
\hline 
(5) & 11 & \ZZ_2\times \ZZ_2 \times \ZZ_3 & G \setminus \{ (1,1,1),(0,0,1) \} & 10 \\
\hline 
(5) & 10 & \ZZ_{14} & \ZZ_{14}\setminus \{ 1,2,5,13\} & 10\\
\hline 
(5) & 9 & \ZZ_2 \times \ZZ_8 & \ZZ_2\times \{1,4,5,7\}\cup\{(0,0),(0,6)\} & 10\\
\hline
\end{array}
}
$
\end{table}

We will not treat the whole table but as an example, let us explain the case $\underline{d}=(5)$ with $10$. Let $G=\ZZ_{14}$, and let
    \[ \mathcal{M}\coloneqq \begin{pmatrix} 
    1&1&1&1&1&1&1&1&1&1\\
1&1&1&1&0&0&0&0&0&1\\
1&1&0&0&1&1&0&0&0&1\\
1&1&0&0&0&0&1&1&0&1\\
1&0&1&0&1&0&1&0&0&1\\
1&0&1&0&0&1&0&1&0&1\\
1&0&0&1&1&0&0&0&1&1\\
1&0&0&0&0&1&1&0&1&1\\
0&1&1&0&1&0&0&0&1&1\\
0&0&1&1&0&1&1&0&0&1\\
0&0&0&1&1&0&1&1&0&1 
\end{pmatrix}\,, \quad \text{and} \quad X\coloneqq 
\begin{pmatrix}
    9\\4\\11\\6\\7\\10\\3\\0\\8\\12
\end{pmatrix} \in G^{10} \,.\]
The rows of $\mathcal{M}$ generate the module $M_{\mathcal{I}_X}$. We have
\[ \mathcal{M} \cdot X = \,^t (0,\dots,0) \mod{14}\,.\]
The smith normal form of $\mathcal{M}$ is 
\[ 
\begin{pmatrix} 
        1&0&0&0&0&0&1&0&0&13\\
        0&1&0&0&0&0&0&0&0&9\\
        0&0&1&0&0&0&1&0&0&28\\
        0&0&0&1&0&0&0&0&0&24\\
        0&0&0&0&1&0&1&0&0&33\\
        0&0&0&0&0&1&0&0&0&19\\
        0&0&0&0&0&0&2&0&0&3\\
        0&0&0&0&0&0&0&1&0&14\\
        0&0&0&0&0&0&0&0&1&4\\
        0&0&0&0&0&0&0&0&0&35\\
        0&0&0&0&0&0&0&0&0&0
\end{pmatrix}\,,
\]
thus the rows of the above matrix are generators of $M_{\mathcal{I}_X}$. In particular, this module is of rank $10$. The other cases are treated similarly.
\par
\renewcommand{\arraystretch}{1.5}
We can now prove the following
\begin{lemma}\label{Lemma: max eta in dimension 5}
    The following are the maximal values of $\nss^G_\ud$ away from the diagonal $Z_\ud$ for $G=E$ an elliptic curve:
\[ \begin{array}{|c|c|c|c|}
\hline 
 \ud & (5) & (1,4) &(2,3 )  \\
 \hline 
 \nss^E_{\ud,\mathrm{max}} & 11& 9 & 10 \\ 
 \hline 
 \end{array}\,.
\]
Moreover, for all $\ud\in \{(5),(1,4),(2,3)\}$ and for all  $0\leq k \leq \nss^E_{\ud,\mathrm{max}}$, there exists a configuration of distinct points $X\in E^{2g}$ with $\nss^E_{\ud}(X)=k$. \par
The following are the maximal values of $\nss^{\CC^\ast}_\ud $ for $\deg \ud = 5$:
\[
\begin{array}{|c|c|c|c|c|c|c|c|}
\hline 
    \ud &(1,1,1,1,1) & (1,1,1,2) &(1,1,3) &(1,2,2)&(1,4)&(2,3)&(5) \\
    \hline 
   \nss^{\CC^\ast}_{\ud,\max }& 5 & 6&7&8&9&10&10\\
    \hline 
\end{array}\,.
\]
Moreover, for all $\ud$ of degree $5$ and for all $0\leq k \leq \nss^{\CC^\ast}_{\ud,\max}$, there exists a configuration of distinct points $X\in (\CC^\ast)^{10}$ with $\nss^{\CC^\ast}_{\ud}(X)=k$.
\end{lemma}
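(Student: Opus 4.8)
The plan is to reduce the statement to the purely combinatorial Question~\ref{Question 2} for the two groups $G=E$ and $G=\CC^\ast$, and then to a finite computation with sublattices of $\ZZ^{2g}$. First I would make the reduction explicit. For $(P,\Xi)=\Prym(\tilde C/C)$ in $\SE_\ud$ with $\deg\ud=5$, resp.\ in $\EE'_{5,t}$ with $\ud=(t,5-t)$, the group $\Pic^0 E$ is $\CC^\ast$, resp.\ $E$; writing $x_1,\dots,x_{2g}$ for the branch points of $p$ read inside $\Pic^0 E$ via the Abel map, Definition~\ref{Def: eta singular case} identifies $\ns(\tilde C/C)$ with the number of relations $I\in\Rel_\ud$ with $\sum_{i\in I}x_i=[\delta]$, and the reducedness of $\Delta$ is precisely the condition $X=(x_1,\dots,x_{2g})\notin Z_\ud$. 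The relation $\Delta\in|2\delta|$ prescribes the total sum $\sum_{i=1}^{2g}x_i=2[\delta]$; since $\Pic^0 E$ is divisible, translating all $x_i$ by a $g$-th root of $[\delta]$ brings this into the normalised form of Question~\ref{Question 2}, and conversely every normalised configuration off $Z_\ud$ comes from an admissible datum. Hence $\nss^E_{\ud,\max}$ and $\nss^{\CC^\ast}_{\ud,\max}$ are exactly the maxima of Question~\ref{Question 2} for $G=E$ and $G=\CC^\ast$ respectively.

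Next I would pass to lattices. For $\mathcal I\subseteq\Rel_\ud$ set $M_\mathcal I=\bar{1}+\langle T_I\rangle_{I\in\mathcal I}\subseteq\ZZ^{2g}$ and $\overline{\mathcal I}=\{I\in\Rel_\ud:T_I\in M_\mathcal I\}$. A configuration is a homomorphism $X\colon\ZZ^{2g}\to G$ subject to the normalisation above; its set $\mathcal I_X$ of satisfied relations is then closed, $\mathcal I_X=\overline{\mathcal I_X}$, one has $\nss^G_\ud(X)=|\mathcal I_X|$, and $X\notin Z_\ud$ means $e_i-e_j\notin\ker X$ for all $i\ne j$ in one block. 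So the values taken by $\nss^G_\ud$ off $Z_\ud$ are exactly the $|\overline{\mathcal I}|$ for which there is a homomorphism $\ZZ^{2g}\to G$ with kernel containing $M_\mathcal I$, realising no relation outside $\overline{\mathcal I}$ and no coincidence of same-block points — concretely, after replacing $\mathcal I$ by $\overline{\mathcal I}$, an embedding of $\ZZ^{2g}/M_\mathcal I$ into $G$ does the job. Since $\overline{\mathcal I}$ is determined by the finitely many lattices $M_\mathcal I$, and Lemma~\ref{Lemma: Rough Bound on X^G_d} bounds the size of any $\mathcal I$ that could be optimal, this is a finite search. The single point where the two cases diverge is the arithmetic of $\ZZ^{2g}/M_\mathcal I$: a finite abelian group embeds into $E$ iff its $p$-rank is $\le 2$ for every $p$, while it embeds into $\CC^\ast$ iff it is cyclic.

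Running this search — this is the algorithm producing Table~\ref{Table: maximal eta values for give G and d} — then yields, for each of the seven partitions $\ud$ of $5$, an optimal $\mathcal I$ together with the group $\ZZ^{2g}/M_\mathcal I$ and $\rank M_\mathcal I$. For $G=\CC^\ast$ the optimal groups are cyclic ($\ZZ_3,\ZZ_4,\ZZ_6,\ZZ_4,\ZZ_8,\ZZ_6$, and $\ZZ_{14}$ for $\ud=(5)$), giving $\nss^{\CC^\ast}_{\ud,\max}=10,12,14,16,18,20,20$; for $G=E$ the optimal groups for $\ud\in\{(5),(1,4),(2,3)\}$ — notably $\ZZ_2\times\ZZ_2\times\ZZ_3$ for $(5)$ — all have $p$-rank $\le 2$ and embed into $E$, giving $22,18,20$. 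The only discrepancy between the two cases, $22$ versus $20$ at $\ud=(5)$, is precisely the non-cyclicity of $\ZZ_2\times\ZZ_2\times\ZZ_3$. To see that \emph{every} even $2k$ with $0\le 2k\le\nss^G_{\ud,\max}$ is attained, I would order the admissible closed families $\mathcal I$ by inclusion and descend from an optimal one to the empty family (where $\nss=0$) by deleting pairs of generators $\{T_I,T_{I^o}\}$ and re-closing, using the sub-maximal configurations already listed in the table as waypoints and checking that the relation count can be made to decrease by exactly $2$ at every step.

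The step I expect to be the main obstacle is this last finite enumeration: showing that the attained even values form the unbroken interval $\{0,2,\dots,\nss^G_{\ud,\max}\}$ requires controlling, at each shrinking of $M_\mathcal I$, both how many relations are lost — so that a drop by exactly $2$ can always be arranged — and the structure of the quotient $\ZZ^{2g}/M_\mathcal I$, so that it remains embeddable into $G$ and the resulting configuration stays off the diagonal. Lemma~\ref{Lemma: Rough Bound on X^G_d} only guarantees that this search is finite, not that it is short; by contrast the reduction of the first paragraph and the bookkeeping of which finite abelian groups embed into $E$ versus $\CC^\ast$ are routine.
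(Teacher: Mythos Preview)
Your reduction to Question~\ref{Question 2} and to the lattice problem, together with the appeal to the computer search underlying Table~\ref{Table: maximal eta values for give G and d} and the embeddability criterion (finite abelian groups into $E$ iff $p$-rank at most $2$, into $\CC^\ast$ iff cyclic), matches the paper's argument for the \emph{maximal} values. One small comment: the translation from $\ns(\tilde C/C)$ to the normalised form of Question~\ref{Question 2} is not part of this lemma in the paper (the lemma is already stated group-theoretically); that translation appears separately, just before Corollary~\ref{Cor: possible values of eta for Egt and SEud}.

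The genuine difference is in the ``every even value $2k$ is attained'' part. You propose a purely combinatorial descent through closed families $\mathcal I$, deleting a generator pair $\{T_I,T_{I^o}\}$, re-closing, and checking that the count drops by exactly $2$ --- and you correctly flag this step as the main obstacle. The paper sidesteps this entirely by exploiting that $G$ is a positive-dimensional algebraic group: for $\mathcal I\subset\Rel_\ud$ the locus $S_{\mathcal I}=\{X\in G^{10}\setminus Z_\ud:\sum x_i=0,\ \sum_{i\in I}x_i=0\ \forall I\in\mathcal I\}$ has codimension $\rank M_{\mathcal I}$ in $G^{10}\setminus Z_\ud$; since each pair of conjugate relations raises the rank by at most one, whenever $\rank M_{\mathcal I_X}=\nss(X)/2+1$ one can remove pairs one at a time and take a generic point of the larger $S_{\mathcal I'}$ to get exactly $2k$ relations. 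A glance at the last column of the table shows this expected rank holds for most entries; for the handful where it fails (e.g.\ $\ud=(5)$ with $\nss/2\in\{10,11\}$, where the rank is already the maximal $10$), the table itself supplies explicit configurations at those values as waypoints, after which the dimension argument takes over. This is quicker and makes your worry about controlling the drop and preserving embeddability unnecessary: continuity in $G$ does the work that your lattice descent would have to do by hand.
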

\begin{proof}
    The assertions for the maximal values of $\nss$ follow directly from Table \ref{Table: maximal eta values for give G and d}: all groups in the table embed in an elliptic curve, whereas some (like $\ZZ_2\times \ZZ_2\times \ZZ_3$) do not embed in $\CC^\ast$. \par
    Fix $\ud$ and $G\in \{E,\CC^\ast\}$. For $\mathcal{I}\subset \Rel_{\ud}$, let 
    \[ S_{\mathcal{I}} \coloneqq \{ X\in G^{10}\setminus Z_\ud \,|\, \sum_{i=1}^{10} x_i=\sum_{i\in I} x_i=0 \quad \forall I\in \mathcal{I} \} \subset G^{10}\setminus Z_\ud  \,.\]
    This is a closed locus in $G^{10}\setminus Z_\ud $ of codimension $\rank M_{\mathcal{I}}$ (if non-empty). Each pair of conjugate relations reduces the dimension by at most one. Thus, if the codimension is the expected one ($\nss(X)+1$), then for all $0\leq k \leq \nss(X) $, there exists a configuration $Y\in G^{10}\setminus Z_\ud $ with $\nss(Y)=k$. For the remaining values, there are explicit examples in the table.
\end{proof}
\begin{remark}
    It would be interesting to know if this property holds for higher $g$: Namely, for a given $\ud$ and say an elliptic curve $E$, can we always find a configuration $X\in E^{2g}$ with
    \[ \nss^E_\ud(X)=k \]
    for all $0\leq k \leq \nss^E_{\ud,\max}$? Somehow this seems a bit too optimistic.
\end{remark}
\subsection{Application to bielliptic Prym varieties}
  As we have seen in Section \ref{Sec: Construction of the Families Eg,t} for a Prym $(P,\Xi)\in \EE'_{g,t}$ we can construct a tower of curves (\ref{Diagramm: E_g,t tower}) and associate a smooth elliptic curve $E$, a line bundle $\delta\in \Pic^g(E)$ and two effective reduced divisors $\Delta'\in E_{2t}$, $\Delta''\in E_{2g-2g}$ verifying
    \[ \BO_E(\Delta'+\Delta'')=\delta^{\otimes 2} \,. \]
    Recall the definition
    \[ \nss^E_\delta(\Delta',\Delta'') \coloneqq \frac{1}{2}\mathrm{Card} \{(D',D'')\in E_{t}\times E_{g-t}\,|\, D^i\leq \Delta^i\,, \BO_E(D'+D'')=\delta \} \,. \]
    Fix an origin on $E$ so we can identify $E$ with $\Pic^0(E)$ and consider $E$ as a group. Let $x\in E$ such that
    \[ g\cdot x= \delta \,. \]
    Let 
    \[ X'=\tau_{x}^\ast \Delta' \,, \quad X''=\tau_{x}^\ast  \Delta''\,, \]
    where $\tau_{x}(y)=y+x$ is the translation by $x$. Then it is clear that $X=(X',X'')\in E^{2g}\setminus Z_{(t,g-t)}$ and
    \[ \nss^E_{(t,g-t)}(X',X'')=\nss^E_\delta(\Delta',\Delta'') \,. \]
    Conversely, given $X=(X',X'')\in E^{2g}\setminus Z_{(t,g-t)}$, one can do the construction in the other way and end up with a Prym $(P,\Xi)$ with the same $\nss^E_\delta$. The case of $\SE_\ud$ is similar after replacing the smooth elliptic curve with a cycle of $\PP^1$'s and after identifying 
    \[ \Pic^{\underline{0}}(E)=\CC^\ast \,. \]
    We thus have the following immediate corollary to Lemma \ref{Lemma: max eta in dimension 5}:
\begin{corollary}\label{Cor: possible values of eta for Egt and SEud}
    Let $0\leq t \leq 2$ and $(P,\Xi)\in \EE_{5,t}'$. Consider a corresponding tower of curves as in Section \ref{Sec: Construction of the Families Eg,t}. Let $E,\delta,\Delta',\Delta''$ be the corresponding elliptic curve, line bundle and ramification divisors. Then the maximal values of $\nss^E_\delta(\Delta',\Delta'')$ (for $(P,\Xi)$ varying in $\EE'_{g,t}$) are given by Lemma \ref{Lemma: max eta in dimension 5}, namely
    \[ \begin{array}{|c|c|c|c|}
\hline 
 (g,t) & (5,0) & (5,1) &(5,2)  \\
 \hline 
 \max_{\EE_{5,t}'} \nss & 11& 9 & 10 \\ 
 \hline 
 \end{array}\,.
\]
Similarly suppose $\deg \ud=5$ and $(P,\Xi)\in \SE_\ud$, then the maximal values of $\nss^E_\delta(\Delta)$ are given by
\[
\begin{array}{|c|c|c|c|c|c|c|c|}
\hline 
    \ud &(1,1,1,1,1) & (1,1,1,2) &(1,1,3) &(1,2,2)&(1,4)&(2,3)&(5) \\
    \hline 
   \max_{\SE_\ud}  \nss& 5 & 6&7&8&9&10&10\\
    \hline 
\end{array}\,.
\]
Moreover, for $0\leq t \leq 2$ (resp. $\deg \ud=5$) and all $0\leq k \leq \max_{\EE'_{g,t}} \nss$ (resp. $0\leq k \leq \max_{\SE_\ud} \nss$), there exist a $(P,\Xi)\in \EE'_{g,t}$ (resp. in $\SE_\ud$) with
\[ \nss^E_\delta(\Delta',\Delta'')=k \,. \]
\end{corollary}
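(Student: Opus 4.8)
The plan is to deduce the corollary directly from Lemma~\ref{Lemma: max eta in dimension 5} by transporting it along the dictionary set up in this section between bielliptic Prym data and point configurations. Concretely I would (i) record the construction of the subsection above as a correspondence preserving $\nss$, (ii) identify the partitions $\ud$ that can occur for $g=5$, and (iii) pull back both the extremal values and the realizability of every even $2k$ from the lemma.

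\textbf{The dictionary.} Given $(P,\Xi)\in\EE'_{g,t}$, take the tower of Section~\ref{Sec: Construction of the Families Eg,t} and the associated $(E,\delta,\Delta',\Delta'')$. I would fix an origin on $E$ and choose $x\in E$ with $g\cdot x=\delta$, which is possible since multiplication by $g$ is surjective on $E$, and set $X'=\tau_x^\ast\Delta'$, $X''=\tau_x^\ast\Delta''$. As $\Delta',\Delta''$ are reduced we get $X=(X',X'')\in E^{2g}\setminus Z_{(t,g-t)}$, and sending a pair $(D',D'')$ to the set of indices of the points of $D'+D''$ is a bijection from $\{(D',D'')\,|\,D^i\le\Delta^i,\ \BO_E(D'+D'')=\delta\}$ to $\{I\in\Rel_{(t,g-t)}\,|\,\sum_{i\in I}x_i=\delta-g\cdot x=0\}$, so $\nss^E_\delta(\Delta',\Delta'')=\nss^E_{(t,g-t)}(X)$. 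Conversely, starting from $X$, translating back, choosing one of the four square roots $\delta'$ of $\BO_E(\Delta')$ and putting $\delta''=\delta-\delta'$ recovers a tower; when the $2g$ points of $X$ are distinct the curve $C$ is smooth, so the resulting $(P,\Xi)$ lies in $\EE_{g,t}^\ast\subset\EE'_{g,t}$. The same applies verbatim to $\SE_\ud$ with $E$ the $n$-cycle of $\PP^1$'s and $\Pic^{\underline{0}}(E)\cong\CC^\ast$ in place of $\Pic^0(E)\cong E$, using that the branch points avoid the nodes.

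\textbf{Matching partitions and concluding.} For $(P,\Xi)\in\EE'_{5,t}$ the partition attached to $X$ is $\ud=(t,5-t)$, i.e. $(5)$, $(1,4)$, $(2,3)$ for $t=0,1,2$ (a degree-$0$ block imposes no condition on relations and may be dropped); these are exactly the three rows of the $G=E$ part of Table~\ref{Table: maximal eta values for give G and d}, so $\max_{\EE'_{5,t}}\nss=\nss^E_{\ud,\max}\in\{22,18,20\}$. For $\SE_\ud$ with $\deg\ud=5$ the relevant group is $\CC^\ast$, and the seven maxima are the values $\nss^{\CC^\ast}_{\ud,\max}$ listed in the lemma. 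For the last assertion I would invoke the second half of Lemma~\ref{Lemma: max eta in dimension 5}, which for each such $\ud$ and each $0\le 2k\le\nss_{\ud,\max}$ produces a configuration $X$ of distinct points with $\nss(X)=2k$; by the dictionary this corresponds to an honest $(P,\Xi)\in\EE_{g,t}^\ast\subset\EE'_{g,t}$ (resp. $\SE_\ud$) with $\nss^E_\delta(\Delta',\Delta'')=2k$.

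\textbf{Main difficulty.} There is no serious obstacle at this level: the entire combinatorial content sits in Lemma~\ref{Lemma: max eta in dimension 5} and Table~\ref{Table: maximal eta values for give G and d}, whose proof (the algorithmic search together with the explicit generating matrices of the modules $M_{\mathcal I_X}$) has already been carried out. The only things needing care are bookkeeping: that translation by $x$ genuinely matches the two counts, that a zero entry of $\ud$ can be suppressed without changing $\Rel_\ud$, and that ``distinct points'' configurations yield Pryms inside $\EE_{g,t}^\ast$ (resp. $\SE_\ud$) rather than merely in the closure, so that the loci named in the corollary are non-empty.
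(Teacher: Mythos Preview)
Your proposal is correct and follows essentially the same route as the paper: the corollary is stated there as an immediate consequence of Lemma~\ref{Lemma: max eta in dimension 5}, and the paragraph preceding it sets up exactly the translation-by-$x$ dictionary you describe. One small wording point: you say the lemma produces configurations of \emph{all} distinct points and hence Pryms in $\EE_{g,t}^\ast$, but in fact some maximal configurations in Table~\ref{Table: maximal eta values for give G and d} (e.g.\ $\ud=(1,4)$ with $X_1=\{0,4\}$, $X_2=\ZZ_8$) only satisfy within-block distinctness, so the resulting Prym lies in $\EE'_{g,t}$ rather than $\EE_{g,t}^\ast$; since the corollary is stated for $\EE'_{g,t}$ this does not affect your argument.
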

We have the following corollary to Lemma \ref{Lemma: Rough Bound on X^G_d}:
\begin{corollary}\label{Cor: Gauss degree on EEg,1 greater than Jacobians}
    Let $g\geq 5$. Then for all $(P,\Xi)\in \EE'_{g,1}$ we have
    \[\deg \GG_\Xi > \binom{2g-2}{g-1}=\deg \GG (\mathcal{J}_g) \,. \]
\end{corollary}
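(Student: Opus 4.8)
The plan is to combine the closed formula for the Gauss degree on $\EE'_{g,1}$ with the rough upper bound on $\ns$ from Lemma~\ref{Lemma: Rough Bound on X^G_d}, and to dispatch the borderline dimension $g=5$ by hand using the sharper count of Lemma~\ref{Lemma: max eta in dimension 5}.

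First I would invoke Proposition~\ref{Prop: Gauss degree on Egt'} together with Theorem~\ref{Thm: degree Gauss Map on Egt, in general} in the case $t=1$: none of the exceptional cases ($t=0$, $(g,t)=(4,2)$, or ``$t=2$ with $C'$ hyperelliptic'') applies, and since $\binom{2t-2}{t-1}=1$ and $\binom{2t}{t}=2$ for $t=1$ we obtain
\[ \deg \GG_\Xi = \binom{2g-2}{g-1} + 2\binom{2g-4}{g-2} - 2^{g-1} - \ns(\tilde{C}/C) \,. \]
Hence the assertion is equivalent to the inequality $\ns(\tilde{C}/C) < 2\binom{2g-4}{g-2} - 2^{g-1}$. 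Next, using the dictionary established just above (which translates a tower in $\EE'_{g,1}$ into a configuration $X=(X',X'')\in E^{2g}\setminus Z_{(1,g-1)}$ with $\sum_i x_i=0$ and $\ns^E_{(1,g-1)}(X)=\ns(\tilde{C}/C)$), we get $\ns(\tilde{C}/C)\le \ns^E_{(1,g-1),\max}$, and Lemma~\ref{Lemma: Rough Bound on X^G_d} applied with $\ud=(1,g-1)$ (so $d_n=g-1$) yields
\[ \ns(\tilde{C}/C) \le \frac{1}{g-1}\binom{2}{1}\binom{2g-2}{g-1} = \frac{2}{g-1}\binom{2g-2}{g-1} \,. \]
So it suffices to prove $\dfrac{2}{g-1}\binom{2g-2}{g-1} < 2\binom{2g-4}{g-2} - 2^{g-1}$.

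For $g\ge 6$ I would use the identity $\binom{2g-2}{g-1}=\tfrac{2(2g-3)}{g-1}\binom{2g-4}{g-2}$ to rewrite the target inequality as $2^{g-1} < c_g\binom{2g-4}{g-2}$, where $c_g\coloneqq 2-\tfrac{4(2g-3)}{(g-1)^2}=\tfrac{2(g^2-6g+7)}{(g-1)^2}>0$ for $g\ge 6$. Setting $a_g\coloneqq \binom{2g-4}{g-2}/2^{g-1}$, one has $a_{g+1}/a_g=\tfrac{2g-3}{g-1}\ge \tfrac{9}{5}$ for $g\ge 6$, while $c_g$ is increasing on that range; since $c_6a_6=\tfrac{14}{25}\cdot\tfrac{35}{16}=\tfrac{49}{40}>1$, the inequality $c_ga_g>1$ propagates to all $g\ge 6$ by an immediate induction, which is exactly what is needed. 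For $g=5$ the rough bound is too weak — it only gives $\tfrac{2}{4}\binom{8}{4}=35$, whereas $2\binom{6}{3}-2^4=24$ — so here I would instead quote Corollary~\ref{Cor: possible values of eta for Egt and SEud} (equivalently Lemma~\ref{Lemma: max eta in dimension 5}), giving $\ns(\tilde{C}/C)\le \max_{\EE'_{5,1}}\ns = 18<24$; thus $\deg\GG_\Xi\ge 70+40-16-18=76>70=\binom{8}{4}=\deg\GG(\mathcal{J}_5)$.

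The only genuine obstacle is the numerical estimate for $g\ge 6$ — i.e.\ controlling $\binom{2g-4}{g-2}$ against $2^{g-1}$, which the ratio-and-induction argument above handles cleanly — together with recognizing that $g=5$ lies just outside the range where the crude bound of Lemma~\ref{Lemma: Rough Bound on X^G_d} suffices, so that the exact dimension-$5$ count from Section~\ref{Sec: additional isolated Singularities} must be used there.
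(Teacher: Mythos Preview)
Your proof is correct and follows essentially the same approach as the paper: apply Theorem~\ref{Thm: degree Gauss Map on Egt, in general} at $t=1$, bound $\ns$ via Lemma~\ref{Lemma: Rough Bound on X^G_d}, and treat $g=5$ separately using the exact value from Corollary~\ref{Cor: possible values of eta for Egt and SEud}. You supply more detail in the numerical estimate for $g\ge 6$ than the paper does (it simply asserts ``$>0$ when $g\ge 6$''), and you correctly write $2\binom{2g-4}{g-2}$ where the paper's proof has what appears to be a typo $2\binom{2g-2}{g-2}$.
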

\begin{proof}
    By Theorem \ref{Thm: degree Gauss Map on Egt, in general} and Lemma \ref{Lemma: Rough Bound on X^G_d} we have
    \begin{align*}
        \deg \GG_\Xi - \binom{2g-2}{g-1}&=2\binom{2g-2}{g-2}-2^{g-1}-2\nss^E_{(1,g-1),\max} \\
        &\geq 2\binom{2g-2}{g-2}-2^{g-1}-\frac{2}{g-1}\binom{2g-2}{g-1}\\
        &>0
    \end{align*}
    when $g\geq 6$. For $g=5$, we have by \ref{Cor: possible values of eta for Egt and SEud}
    \[ \deg \GG_\Xi \geq 94-18=76>70=\deg \GG(\mathcal{J}_5) \,.\]
\end{proof}

\subsection{A special ppav in arbitrary dimension}\label{Sec: App: A special ppav in arbitrary dimension}
In this section, we let consider $\ud=(1^g)=(1,\dots,1)\in \mathscr{P}_g$ and construct a very ``degenerate'' Prym in $\SE_{(1^g)}$. In the case $g=4$ we recover Varley's fourfold with this construction. In general, this gives a Prym with a very low Gauss degree. This is the lowest non-zero Gauss degree that we know of, apart from hyperelliptic Jacobians. \par 
Suppose first that $g$ is even. Let $E$ be the cycle of $g$ $\PP^1$'s. It is easy to see how to maximize $\nss^E_\ud$ in this situation. Let $E_1,\dots,E_g$ be the irreducible components of $E$. Identify each component with $\PP^1=\CC \cup \{\infty\}$, such that $E_i$ meets $E_{i-1}$ at $0$ and $E_{i+1}$ at $\infty$. For $1\leq i \leq n$ let $Q^1_i$ (resp. $Q^{-1}_i$) be the point in $E_i$ corresponding to $1$ (resp. $(-1)$) under this identification. We define
\[ \Delta\coloneqq \sum_{i=1}^g Q_i^1+Q_i^{-1} \,, \quad \delta \coloneqq \BO_E(Q_1^1+\cdots+Q_g^1)\in \Pic^{\ud}(E) \,. \]
By Proposition \ref{Prop: Boundary: construction of the covering from E and a branch divisor} and Section \ref{Sec: Preliminaries boundary of EEgt} we can associate to this a Prym variety $(P,\Xi)\in \SE_{\ud}$. We then have
\begin{proposition}
    The Prym $(P,\Xi)$ constructed above minimizes the Gauss degree on $\SE_{(1^g)}$ with $g$ even. We have
    \[ \nss^E_\delta(\Delta)=2^{g-2} \,, \quad \text{and} \quad \deg \GG_\Xi = g \binom{g-1}{ g/2} - 2^{g-1} \,. \]
\end{proposition}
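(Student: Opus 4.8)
The plan is to invoke Theorem \ref{Thm: Degree Egt with E cycle of P1's} and split the statement into three elementary pieces: evaluating $\mu(\ud)$ for $\ud=(1,\dots,1)$, proving the uniform bound $\ns(\tilde C/C)\le 2^{g-1}$ over all of $\SE_{\ud}$, and checking that the displayed $(P,\Xi)$ attains it. Since Theorem \ref{Thm: Degree Egt with E cycle of P1's} gives $\deg\GG=\mu(\ud)-\ns(\tilde C/C)$ for every member of $\SE_{\ud}$, with $\mu(\ud)$ depending only on $\ud$, minimizing the Gauss degree on $\SE_{\ud}$ is the same as maximizing $\ns(\tilde C/C)$, so these three pieces suffice.

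First I would compute $\mu(\ud)$. Substituting $d_i=1$ into the formula of Theorem \ref{Thm: Degree Egt with E cycle of P1's}, each factor $\binom{2d_i-2}{d_i-\epsilon_i}=\binom{0}{1-\epsilon_i}$ vanishes unless $\epsilon_i=1$, so only the term $\epsilon=(1,\dots,1)$ survives; with $|\epsilon|=g$ this gives $\mu(\ud)=g\cdot 2^{0}\binom{g-1}{\lfloor(g-1)/2\rfloor}=g\binom{g-1}{g/2}$, using that $g$ is even. Hence $\deg\GG=g\binom{g-1}{g/2}-\ns(\tilde C/C)$ throughout $\SE_{\ud}$.

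Next, the uniform bound. Let $(P,\Xi)\in\SE_{\ud}$ come from a datum $(E,\delta,\Delta)$, so $E=E_1\cup\dots\cup E_g$ is the cycle of $\PP^1$'s, $\underline{\deg}(\Delta)=(2,\dots,2)$, and $\Delta$ is reduced and supported away from the nodes; write $\{a_i,b_i\}=\Delta\cap E_i$. By Definition \ref{Def: eta singular case}, $\ns(\tilde C/C)$ counts the subdivisors $D\le\Delta$ with $\BO_E(D)=\delta$, and such a $D$ is automatically of multidegree $(1,\dots,1)$, i.e. a choice of one point of $\{a_i,b_i\}$ on each $E_i$; there are $2^g$ candidates. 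I would consider the fixed-point-free involution toggling the choice on $E_1$: it changes $\BO_E(D)$ by $\BO_E(a_1-b_1)\in\Pic^{\underline 0}(E)\cong\CC^\ast$, and this class is nontrivial since the rational function $(z-a_1)/(z-b_1)$ on $E_1=\PP^1$ has gluing ratio $a_1/b_1\neq 1$ between the two nodes (here $a_1\neq b_1$ because $\Delta$ is reduced, and neither point lies at a node). Thus the two divisors in each of the $2^{g-1}$ orbits have distinct associated line bundles, so at most one per orbit equals $\delta$, giving $\ns(\tilde C/C)\le 2^{g-1}$.

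Finally, for the constructed example I would first check $\delta^{\otimes 2}=\BO_E(\Delta)$, so that $(P,\Xi)$ really lies in $\SE_{\ud}$: comparing componentwise the sections cutting out $2\sum_i Q_i^1$ and $\sum_i(Q_i^1+Q_i^{-1})$, the gluing constants differ overall by $(-1)^g=1$. Then a subdivisor $D=\sum_i Q_i^{\sigma_i}$ with $\sigma_i\in\{1,-1\}$ satisfies $\BO_E(D)=\delta$ exactly when $\prod_i\sigma_i=1$, i.e. when $\#\{i:\sigma_i=-1\}$ is even, which occurs for $2^{g-1}$ choices; hence $\ns(\tilde C/C)=2^{g-1}$ and $\deg\GG=g\binom{g-1}{g/2}-2^{g-1}$, the minimum over $\SE_{\ud}$ by the previous step. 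The only place requiring care is the bookkeeping of line bundles on a cycle of $\PP^1$'s via gluing data (equivalently the identification $\Pic^{\underline 0}(E)\cong\CC^\ast$ already used above), but this is routine and I do not expect a genuine obstacle.
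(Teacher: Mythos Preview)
Your proof is correct and follows essentially the same approach as the paper. The paper's argument is terser: it cites the earlier computation of $\mu(\underline{1}_g)$ rather than redoing it, and phrases both the upper bound (pigeonhole: if more than $2^{g-1}$ relations held, two of them would differ in a single coordinate, forcing $Q_i\sim Q_i'$) and the exact count (choose the first $g-1$ points freely, the last is then forced) slightly differently from your toggling/parity formulation, but the content is the same. Your added verification that $\delta^{\otimes 2}=\BO_E(\Delta)$ via $(-1)^g=1$ is a nice sanity check the paper omits.
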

\begin{proof}
    It is clear that $\nss^E_\delta(\Delta)=2^{g-2}$. Indeed, if we want to find a configuration of points in $\Delta$ whose associated line bundle is $\delta$, we can choose the first $(g-1)$ points arbitrarily among $\{Q_i^1,Q_i^{-1}\}$, and then we only have one choice for the last point. The Gauss degree then follows from Theorem \ref{Thm: Degree Egt with E cycle of P1's}. Finally, it is clear that this maximizes $\nss^E_\ud$: Suppose that there exists a configuration $\Delta=Q_1+Q_1'+\cdots Q_g+Q_g'\in E_{2\ud}\setminus Z_\ud$ (recall $Z_\ud$ is the diagonal defined at the beginning of this section) with strictly more than $2^{g-1}$ relations. Then by the pigeonhole principle we would have
    \[ D+Q_i\sim \delta \sim D+Q_i'\]
    for some $D\subset \Delta $ and $i$. This is impossible since $Q_i\neq Q_i'$
\end{proof}
\begin{remark}
    When $g=4$, the above gives a ppav with Gauss degree $4$. We thus recover Varleys fourfold! Indeed, Varleys fourfold is the unique ppav in $\mathcal{A}_4$ with Gauss degree $4$ \cite{Gru17}. In a sense, the construction above could be seen as a generalization of Varleys fourfold. One could ask if this is the smallest achievable non-zero Gauss degree besides the hyperelliptic locus $\mathcal{H}_g$ (indeed, an easy computation shows that this degree is higher than the degree on $\mathcal{H}_g$ as soon as $g\geq 5$).
\end{remark}
We have the following construction when $g$ is odd: repeat the same procedure but take $Q_i^1$ (resp. $Q_i^{-1}$) to be the point above $j=e^{2i\pi/3}$ (resp. $j^2$) instead. We have
\begin{equation}\label{Equ: Add sing, number of sing of special ppav in odd dimension}
    \nss^E_\delta(\Delta)=\frac{1}{3}(2^{g-1}-1)\,.
\end{equation} 
Indeed, let $G=\ZZ_3$, $X=\{1,-1\}\subset \ZZ_3$ and for $g\geq 1$, let
\begin{align*} 
u_g&\coloneqq  \Card \{ x=(x_1,\dots,x_g)\in X^g \,|\, x_1+\cdots+x_g = 0 \} =\nss^E_\delta(\Delta) \,, \\
v_g &\coloneqq \Card \{ x\in X^g \,|\, x_1+\cdots+x_g=1 \} =\Card \{ x\in X^g\,|\, x_1+\cdots+x_g=-1 \}\,. 
\end{align*}
We have $u_g+2v_g=2^g$ and $u_{g+1}=2 v_g$. From this \ref{Equ: Add sing, number of sing of special ppav in odd dimension} follows. The corresponding Prym $(P,\Xi)$ thus has Gauss degree
\[ \deg \GG_\Xi= g \binom{g-1}{\lfloor g/2 \rfloor}- \frac{2^g-2}{3} \,. \]
We do not know whether this is the minimum on $\SE_\ud$.

\printbibliography

\end{document}